\documentclass[11pt,a4paper]{article}
\usepackage[margin=26mm]{geometry}
\usepackage[T1]{fontenc}
\usepackage{lmodern,microtype,amsmath,amssymb,amsthm,mathtools}
\usepackage{enumitem,needspace,booktabs,xcolor,hyperref}
\definecolor{ink}{HTML}{16364A}
\hypersetup{colorlinks=true,linkcolor=ink,urlcolor=ink,citecolor=ink,
pdfauthor={Shaozhen Cao, Zhuoni Chi, and Ping Nie},
pdftitle={Density of Forces Producing Navier--Stokes Blowup}}
\newtheorem{theorem}{Theorem}[section]
\newtheorem{proposition}[theorem]{Proposition}
\newtheorem{lemma}[theorem]{Lemma}
\newtheorem{corollary}[theorem]{Corollary}
\theoremstyle{definition}
\theoremstyle{remark}\newtheorem{remark}[theorem]{Remark}
\newcommand{\R}{\mathbb R}\newcommand{\TT}{\mathbb T^3}
\newcommand{\FF}{\mathcal F}\newcommand{\XX}{\mathcal X}
\newcommand{\BB}{\mathcal B}\newcommand{\PP}{\mathbb P}
\newcommand{\dd}{\,\mathrm d}\newcommand{\eps}{\varepsilon}
\newcommand{\norm}[1]{\left\lVert#1\right\rVert}
\newcommand{\ip}[2]{\left\langle#1,#2\right\rangle}
\DeclareMathOperator{\supp}{supp}
\setlist[enumerate]{label=(\roman*),itemsep=2pt,topsep=4pt}
\AtBeginEnvironment{theorem}{\samepage}
\AtBeginEnvironment{proposition}{\samepage}
\AtBeginEnvironment{lemma}{\samepage}
\AtBeginEnvironment{corollary}{\samepage}
\title{Density of Forces Producing\\ Navier--Stokes Blowup}
\author{Shaozhen Cao\\\texttt{shaozhencao@zju.edu.cn}\and
Zhuoni Chi\\\texttt{znchi@zju.edu.cn}\and
Ping Nie\\\texttt{ping.nie@pku.edu.cn}}
\date{}
\begin{document}
\maketitle
\begin{abstract}
We study the density of smooth external forces for which the three-dimensional
Navier--Stokes equations lose classical regularity by a prescribed time $T>0$.
Starting from the compact forced blowup solution of OpenAI, we construct a
blowup solution near every given smooth solution while preserving its
initial velocity. A smooth cutoff of a local vector potential makes the
given velocity vanish near the support of a rescaled blowup solution.
The two velocities then have no nonlinear interaction, and the resulting
force remains smooth through the blowup time. For fixed viscosity and zero
initial velocity, such forces are dense in the relative $L^1_tH^s_x$ topology
on both $\TT$ and $\R^3$ exactly when $s<1/2$.
\end{abstract}
\noindent\textbf{Keywords:} forced Navier--Stokes equations; finite-time blowup;
force-space density; Sobolev spaces; compact support.

\section{Introduction}\label{sec:intro}
The three-dimensional incompressible Navier--Stokes equations with
viscosity $\nu>0$ and external force $f$ are
\begin{equation}\label{eq:NS}
 \partial_tu+(u\cdot\nabla)u-\nu\Delta u+\nabla p=f,
 \qquad \nabla\cdot u=0,\qquad u(\cdot,0)=a.
\end{equation}
We work on $\R^3$ and on the unit torus $\TT=\R^3/\mathbb Z^3$.
For fixed $a$ and $T>0$, we ask in which force norms the smooth forces
producing classical breakdown by $T$ are dense.

The geometric argument is a divergence-free version of the smooth Urysohn
construction: make a given smooth velocity vanish near a small compact set,
leave it unchanged outside a slightly larger set, and place a compact blowup
solution in the resulting region. Smooth cutoffs are supplied by the classical
smooth Urysohn lemma~\cite[Corollary~1.3.4]{petersen}. To preserve
incompressibility, apply the cutoff to a local vector potential rather than
to the velocity itself. The remaining equation error is absorbed into the
external force. Our blowup building block is the following result of
OpenAI~\cite[Theorem~1.1]{openai}.

\Needspace{12\baselineskip}
\begin{theorem}\label{thm:packet}
For every $\nu>0$ there exist a force
$f\in C_c^\infty(\R^3\times(0,\infty);\R^3)$, a compact set
$K\subset\R^3$, and smooth velocity and pressure fields $u,p$ on
$\R^3\times[0,1)$ satisfying
\begin{equation}\label{eq:packet}
 \partial_tu+(u\cdot\nabla)u-\nu\Delta u+\nabla p=f,
 \qquad \nabla\cdot u=0,\qquad u(\cdot,0)=0,
\end{equation}
such that $\supp u(\cdot,t)\cup\supp p(\cdot,t)\subset K$ for every
$0\le t<1$,
\begin{equation}\label{eq:packetblowup}
 \sup_{0\le t<1}\norm{u(t)}_{L^2(\R^3)}<\infty,
 \qquad \limsup_{t\uparrow1}\norm{u(t)}_{L^\infty(\R^3)}=\infty.
\end{equation}
Consequently, there is no smooth solution $(u,P)$ on
$\R^3\times[0,\infty)$ with the same force and initial datum whose
kinetic energy is uniformly bounded
$\sup_{t\ge0}\frac12\int_{\R^3}|u(x,t)|^2\dd x<\infty$.
\end{theorem}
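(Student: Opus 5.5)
The existence part of Theorem~\ref{thm:packet} is exactly~\cite[Theorem~1.1]{openai}, and I would take it from there rather than reprove it. The one genuinely hard feature is that $f$ is smooth and compactly supported in space--time \emph{including} across $t=1$, while $u$ blows up at $t=1$. A naive self-similar ansatz $u=\lambda(t)^{-1}U(x/\lambda(t))$ with $f$ defined as the equation residual gives a force that is itself singular at $t=1$. So the smoothness of $f$ has to come from a nonlinear energy cascade, and reconstructing that is beyond what this paper needs. What remains is to derive the ``Consequently'' clause from the displayed properties~\eqref{eq:packet}--\eqref{eq:packetblowup}.

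For the consequence I argue by contradiction. Suppose $(\tilde u,\tilde P)$ is smooth on $\R^3\times[0,\infty)$, solves the equation with the same $f$ and $\tilde u(\cdot,0)=0$, and satisfies $\sup_t\norm{\tilde u(t)}_{L^2}<\infty$. I would show $\tilde u=u$ on $[0,1)$. Then smoothness of $\tilde u$ up to $t=1$ gives $\sup_{t\in[1-\delta,1]}\norm{\tilde u(t)}_{L^\infty(K)}<\infty$, since $K\times[1-\delta,1]$ is compact. This contradicts the $\limsup$ in~\eqref{eq:packetblowup}, because $u$ is supported in $K$.

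Uniqueness will come from a weak--strong energy estimate for $w=\tilde u-u$ on $[0,t_0]$ with $t_0<1$ fixed. On this interval $u$ is smooth and supported in the compact set $K$, so $\norm{\nabla u}_{L^\infty}\le C(t_0)$. Subtracting the equations gives
\[
\partial_t w+(\tilde u\cdot\nabla)w+(w\cdot\nabla)u-\nu\Delta w+\nabla(\tilde P-p)=0 .
\]
Formally, testing with $w$ yields
\[
\tfrac12\tfrac{\mathrm d}{\mathrm dt}\norm{w}_{L^2}^2+\nu\norm{\nabla w}_{L^2}^2=-\int (w\cdot\nabla u)\cdot w ,
\]
since the transport term and the pressure term vanish by incompressibility. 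Hence $\tfrac{\mathrm d}{\mathrm dt}\norm{w}_{L^2}^2\le 2C(t_0)\norm{w}_{L^2}^2$, and Gronwall with $w(0)=0$ gives $w\equiv0$ on $[0,t_0]$, hence on $[0,1)$.

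The main obstacle is justifying this computation for $\tilde u$, which is only known to be smooth with bounded energy. There is no a priori decay at infinity, no $\nabla\tilde u\in L^2$, and the pressure $\tilde P$ is unnormalized. My first attempt is to localize. Test instead with $w\varphi_R$, where $\varphi_R$ is a smooth cutoff equal to $1$ on $B_R$ and supported in $B_{2R}$. Outside $K$ we have $w=\tilde u$, so the error terms are the usual local-energy fluxes $\int|\tilde u|^2\tilde u\cdot\nabla\varphi_R$, $\int(\tilde P-p)\,w\cdot\nabla\varphi_R$ and $\nu\int|w|^2\Delta\varphi_R$.
\begin{itemize}
\item The viscous flux is $O(R^{-2})$ by the energy bound.
\item For the cubic and pressure fluxes I would either adopt the standard convention that ``smooth solution with bounded energy'' means $\tilde u\in L^\infty_tL^2_x\cap L^2_t\dot H^1_x$ with the pressure recovered from the Leray projection, or add these conditions explicitly to the statement.
\end{itemize}
Under that convention $\tilde P-p$ is given by Riesz transforms of $\tilde u\otimes\tilde u-u\otimes u$, so it lies in $L^{5/3}$ on space--time slabs. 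The fluxes then vanish as $R\to\infty$ along a subsequence, and the Gronwall argument above goes through.
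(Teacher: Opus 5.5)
The existence step matches the paper: Theorem~\ref{thm:packet} is quoted in full from \cite[Theorem~1.1]{openai}. That quotation includes the ``Consequently'' clause, which the paper does not derive separately. The gap is in the derivation you supply for that clause.

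The clause quantifies over every smooth pair $(\tilde u,\tilde P)$ on $\R^3\times[0,\infty)$ with $\sup_t\norm{\tilde u(t)}_{L^2}<\infty$, i.e.\ the Clay class. This class has no dissipation bound, no decay beyond $L^2$, and an arbitrary smooth pressure. Your Gr\"onwall step closes only if the fluxes $\int|\tilde u|^2\tilde u\cdot\nabla\varphi_R$ and $\int(\tilde P-p)\,w\cdot\nabla\varphi_R$ vanish as $R\to\infty$. For that you need $\tilde u\in L^3$ on space--time slabs (e.g.\ from $\nabla\tilde u\in L^2_{t,x}$) and $\tilde P-p$ given by Riesz transforms. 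Neither follows from the hypotheses:
\begin{enumerate}
\item Taking the divergence of the equation only shows that $\tilde P$ differs from the Leray pressure of $\tilde u$ by a function $h(x,t)$ harmonic in $x$, and such $h$ may grow at infinity. Already for $h=a(t)\cdot x$ the extra term is $a(t)\cdot\int\varphi_R\tilde u\dd x$. The $L^2$ bound does not force this to zero; it is only bounded by $CR^{3/2}\norm{\tilde u}_{L^2(B_{2R}\setminus B_R)}$.
\item Smoothness on $\R^3\times[0,\infty)$ gives no uniform control of $\nabla\tilde u$, or of $\norm{\tilde u}_{L^3}$, at spatial infinity.
\end{enumerate}
``Adopting the convention'' or ``adding these conditions to the statement'' therefore proves a strictly weaker assertion. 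It shows there is no competitor in $L^\infty_tL^2_x\cap L^2_t\dot H^1_x$ with Leray pressure, not the clause as written.

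Within that restricted class your computation is sound. The $L^{10/3}_{t,x}$ bound and H\"older on the annular slabs give fluxes of order $R^{-7/10}$, and $\int(w\cdot\nabla u)\cdot w$ is harmless because $u$ is supported in $K$. In fact, in the paper's own whole-space solution class of Section~\ref{sec:preliminaries}, the weaker statement is immediate from the uniqueness in Proposition~\ref{prop:local}, with no flux computation. That class has velocities in $C([0,S];H^m)$ for every $m$ and pressure gradient given by \eqref{eq:Rpressure}.

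To obtain the clause exactly as stated, you must either cite it, as the paper does, or prove that every smooth bounded-energy solution with arbitrary smooth pressure lies in a class where such uniqueness holds. That reduction is the nontrivial part, and your proposal sets it aside rather than proving it.
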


Fix one such building block $(U,P,F)$~\cite{openai,lean}.
The local gluing strategy parallels
Enciso, Pe\~nafiel-Tom\'as and Peralta-Salas~\cite[Theorem~1.5,
Lemma~2.11, Sections~11.1 and~11.3]{eppt2025}: first modify a given smooth
solution locally, then glue in a blowup building block. Their construction
produces weak unforced Euler solutions by removing Reynolds stress with
convex integration. Here the force is allowed to change, and the glued
solution is classical before its blowup time.

More explicitly, for a given smooth velocity $v=\nabla\times A$ locally,
choose smooth cutoffs and set
\[
 w_\eps=-\nabla\times(\eta_\eps\theta_\eps A),\qquad
 u_\eps=v+w_\eps+U_\eps.
\]
The corrected velocity $v+w_\eps$ vanishes near the support of the rescaled
building block $U_\eps$. Hence both mixed nonlinear terms vanish. The force
is the sum of the given force, the rescaled force, and a smooth cutoff
correction. Plancherel's theorem and the Fourier dilation formula, followed
by the time change $t=t_\eps+\eps^2\tau$, give the force factor
$\eps^{2/q-3/2-s}$ in $L^q_t\dot H^s_x$, wherever finite.
For $0\le s\le1$, the corresponding fractional estimate also follows from
the classical Sobolev interpolation inequality
\[
 \norm{z}_{\dot H^s}\le\norm{z}_2^{1-s}\norm{\nabla z}_2^s,
\]
which is H\"older's inequality on the Fourier side; see
\cite[Chapter~4, Sections~1--2]{taylor2011}. The cutoff correction has one
additional power of $\eps$. Negative orders require the separate
low-frequency estimate in Section~\ref{sec:whole}.

The resulting sharp thresholds for zero initial velocity are
\begin{center}
\begin{tabular}{lll}
\toprule Domain & Force topology & Density holds exactly when\\
\midrule
$\TT$ & $L^1_tH^s_x$ & $s<1/2$\\
$\R^3$ & $L^1_tH^s_x$ & $s<1/2$\\
$\R^3$ & $L^2_tH^s_x$ & $s<-1/2$\\
\bottomrule
\end{tabular}
\end{center}
Below threshold, density holds for every fixed admissible smooth initial
velocity. Near a solution smooth through $T$, the approximation preserves
its initial velocity and earlier history, blows up exactly at $T$, and
converges in energy and dissipation. For a general given force the conclusion
is breakdown by $T$, since a solution that already breaks down earlier needs
no modification. Non-density at and above threshold follows from the forced
critical regularity estimates, not from scaling alone.

For comparison, Hofmanov\'a, Zhu and Zhu~\cite[Theorem~1.4, Corollary~4.6,
Theorem~5.1]{hzz2024} obtain force-density results for nonunique Leray--Hopf
solutions. Our property is classical breakdown under a smooth force.
Classical local theory and the critical velocity framework are supplied by
Tao~\cite{tao2013,tao2018}. Section~\ref{sec:preliminaries} fixes conventions and collects
the standard estimates used below; Sections~\ref{sec:torus}
and~\ref{sec:whole} give the periodic and whole-space arguments.

\section{Definitions and preliminary results}\label{sec:preliminaries}
\subsection{Norm conventions}
We use the unitary angular Fourier transform on $\R^3$ and Fourier series
$\widehat z(k)=\int_{\TT}z(x)e^{-2\pi i k\cdot x}\dd x$ on the unit torus.
Thus
\[
 \norm{z}_{H^s(\R^3)}^2=\int(1+|\xi|^2)^s|\widehat z(\xi)|^2\dd\xi,
 \qquad
 \norm{z}_{H^s(\TT)}^2=\sum_k(1+4\pi^2|k|^2)^s|\widehat z(k)|^2.
\]
These are the distributional Sobolev spaces of
Taylor~\cite[Chapters~3--4]{taylor2011}; vector and tensor norms sum the
squared component norms. The homogeneous norms replace these weights by
$|\xi|^{2s}$ and $(2\pi|k|)^{2s}$, respectively, with zero mean on the
torus. Tao's unnormalized Fourier convention gives the same Euclidean norms
after its Plancherel factor~\cite[Appendix~A]{taodispersive}.

Time norms are Bochner norms of strongly measurable paths, with equality
almost everywhere understood~\cite[Definitions~6.14, 6.27]{hunterpde}:
\begin{equation}\label{eq:time-norms}
 \norm{z}_{L^q(I;X)}=\left(\int_I\norm{z(t)}_X^q\dd t\right)^{1/q},
 \qquad \norm{z}_{L^\infty(I;X)}=\operatorname*{ess\,sup}_{t\in I}\norm{z(t)}_X.
\end{equation}
Force norms use $(0,\infty)$ unless stated otherwise, and give each smooth
force class its relative norm topology. On $(0,T)$ we write
\begin{equation}\label{eq:Enorm}
 \norm{z}_{E_T}=\norm{z}_{L^\infty_tL^2_x}+\norm{\nabla z}_{L^2_tL^2_x}.
\end{equation}
This is equivalent to the sum norm on $L^\infty_tL^2_x\cap L^2_tH^1_x$;
it assigns no value at the terminal time.

\subsection{Smooth data and solution classes}
All fields are real, and forces need not be divergence free. We write
$L^2_\sigma(D)$ for the divergence-free subspace of $L^2(D;\R^3)$.
The allowed initial velocities and forces are
\begin{align}
 \XX=\mathcal X_{\TT}&=C^\infty_{\mathrm{div}}(\TT;\R^3),&
 \FF=\mathcal F_{\TT}&=C_c^\infty(\TT\times(0,\infty);\R^3),
 \label{eq:inputspaces}\\
 \mathcal X_{\R}&=H^\infty(\R^3;\R^3)\cap L^2_\sigma(\R^3),
 \label{eq:Rinitial}
\end{align}
where $H^\infty=\bigcap_{m=0}^\infty H^m$ has its usual Fr\'echet topology.
On the whole space set
\begin{equation}\label{eq:Rclasses}
 \mathcal F_{\R}=\left\{f\in C^\infty([0,\infty);H^\infty):
 \norm{f}_{L^1_tH^m_x}+\norm{f}_{L^2_tH^m_x}<\infty
 \text{ for every integer }m\ge0\right\}.
\end{equation}
Smoothness into $H^\infty$ means smoothness into each $H^m$, with
one-sided time derivatives at zero. Thus periodic forces vanish near zero
and outside a compact time interval; whole-space forces may be nonzero at
zero and need not have compact support. Both classes consist of forces
defined through any possible blowup time of the velocity.

On the torus, velocity and pressure are periodic and $\int_{\TT}p=0$.
On $\R^3$, a classical velocity belongs to $C([0,S];H^m)$ for every
integer $m\ge0$ on each compact interval of its lifespan. Its pressure
gradient is specified below, and the scalar pressure is determined up to
a function of time. In either case local uniqueness defines a maximal
classical lifespan, denoted by $T^\nu_{\max}(a,f)$ on the torus and
$T^\nu_{\max,\R}(a,f)$ on the whole space. A given smooth solution is
\emph{smooth through $T$} if it extends smoothly to $[0,T+\delta]$
for some $\delta>0$ in the relevant class.

For fixed $\nu,T>0$, define
\begin{align}
 \BB_{\nu,a,T}&=\{f\in\FF:T^\nu_{\max}(a,f)\le T\},&
 \BB^0_{\nu,T}&=\BB_{\nu,0,T},\label{eq:singularforces}\\
 \mathcal B^\R_{\nu,a,T}
 &=\{f\in\mathcal F_{\R}:T^\nu_{\max,\R}(a,f)\le T\}.
 \label{eq:Rsingularforces}
\end{align}
We call these the forces producing classical breakdown by $T$, using
``breakdown for failure of classical continuation, as in the Clay
problem statement~\cite[alternatives (C) and (D)]{clay}.
The glued solutions satisfy the more specific condition of unbounded
speed at their terminal time.

\subsection{Homogeneous spaces and Fourier multipliers}
Write $\Lambda=(-\Delta)^{1/2}$ and $J=(I-\Delta)^{1/2}$, with the
Fourier conventions of Section~\ref{sec:intro}. On mean-zero periodic
fields, the homogeneous and inhomogeneous Sobolev norms are equivalent
at each fixed order. For $0<a<3/2$, the homogeneous completion is represented by
$\widehat z=|\xi|^{-a}G$, $G\in L^2$, with its unique
$L^{6/(3-2a)}$ representative supplied by Sobolev embedding.
The distributional pairing is well defined since $|\xi|^{-a}$ is locally
square integrable and Schwartz tests decay rapidly.
For the completed energy-force space we fix
\begin{equation}\label{eq:homogeneous-realization}
 \dot H^{-1}(\R^3)=\{h\in\mathcal S':\widehat h=F
 \text{ is a measurable function and }|\xi|^{-1}F\in L^2\}.
\end{equation}
Functions representing distributions are identified almost everywhere.
The maps $h\mapsto|\xi|^{-1}\widehat h$ and
$h\mapsto\langle\xi\rangle^s\widehat h$ identify these spaces with $L^2$;
the inverse Fourier data define tempered distributions. Thus they are
separable Hilbert spaces, with no polynomial ambiguity. Real vector fields
correspond to the conjugate-symmetric Fourier data.

\subsection{Standard Sobolev estimates}\label{sec:sobolev-estimates}
We use the tame product estimate and $H^2\hookrightarrow L^\infty$:
\begin{equation}\label{eq:Rproduct}
 \norm{vw}_{H^m}\le C_m(\norm{v}_{H^2}\norm{w}_{H^m}
              +\norm{w}_{H^2}\norm{v}_{H^m}),\qquad
 \norm{v}_\infty\le C\norm{v}_{H^2},\quad m\ge2.
\end{equation}
See Tao~\cite[Appendix~A, (A.13), Lemma~A.8]{taodispersive}; the periodic
version follows by the same Fourier convolution estimate, with
$\|\widehat v\|_{\ell^1}\le C\|v\|_{H^2}$. The critical embeddings used below are
\begin{equation}\label{eq:embeddings}
 \norm{v}_3\le C\norm{v}_{\dot H^{1/2}},\qquad
 \norm{\nabla v}_3+\norm{\Lambda v}_3\le C\norm{v}_{\dot H^{3/2}},\qquad
 \norm{\nabla v}_6\le C\norm{\Delta v}_2.
\end{equation}
They follow from the homogeneous Sobolev inequality
\cite[Appendix~A, (A.11)]{taodispersive}, applied also to derivatives.
On the torus use the compact-manifold embedding
\cite[Chapter~13, Proposition~6.4 and preceding discussion]{taylor2011iii}
and the spectral gap for mean-zero fields. These statements use the
homogeneous representatives fixed above; no embedding of $\dot H^{3/2}$
into $L^\infty$ is used.

\subsection{Projection, pressure, and local existence}\label{sec:analytic}
The Leray projection $\PP$ has Fourier symbol
$I-\xi\otimes\xi/|\xi|^2$ on $\R^3\setminus\{0\}$ and the same
symbol at $k\ne0$ on the torus. At the periodic zero mode it is the
identity. It is bounded on every $H^s$ and commutes with derivatives and
the heat semigroup. The projected equation is
\begin{equation}\label{eq:projected}
 \partial_tu-\nu\Delta u=-\PP\nabla\cdot(u\otimes u)+\PP f.
\end{equation}
On the torus, pressure is recovered from
\[
 \Delta p=\nabla\cdot f-\sum_{i,j}\partial_i\partial_j(u_i u_j),
 \qquad \int_{\TT}p=0.
\]
The right side has zero mean. On $\R^3$ we require
\begin{equation}\label{eq:Rpressure}
 \nabla p=(I-\PP)(f-\nabla\cdot(u\otimes u))=:G.
\end{equation}
The field $G$ is curl free. The radial potential
$p(x,t)=\int_0^1G(rx,t)\cdot x\dd r$ therefore recovers the pressure,
up to a function of time. We impose no $L^2$ condition on $p$ itself;
$\nabla p\in L^2$ excludes nonzero constant pressure gradients.

\begin{proposition}
\label{prop:local}\label{lem:Rlocal}
For each initial velocity in the stated class and each force smooth into
every $H^m$ on compact time intervals, equation~\eqref{eq:NS} has a unique
maximal smooth velocity, with pressure determined as above. If a solution
is defined on $[0,S)$, $S<\infty$, and
\begin{equation}\label{eq:criterion}
 \int_0^S\norm{u(t)}_{H^2(D)}^2\dd t<\infty,
\end{equation}
then it extends smoothly beyond $S$.
\end{proposition}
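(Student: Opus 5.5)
We prove Proposition~\ref{prop:local} by the classical energy method on the projected equation~\eqref{eq:projected}, separately for local existence, uniqueness, and continuation, working on $D=\TT$ or $D=\R^3$ with the convention that each statement holds on every compact subinterval of time.

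\emph{Local existence.} We first solve a mollified problem. Replace the nonlinearity by $\PP\nabla\cdot(J_\delta u\otimes J_\delta u)$, where $J_\delta$ is a Fourier cutoff, and solve the resulting Lipschitz ODE in each $H^m\cap L^2_\sigma$. For $m\ge3$ we pair $\partial^\alpha$ of the equation with $\partial^\alpha u$ for $|\alpha|\le m$. The transport term $\langle\partial^\alpha(u\cdot\nabla u),\partial^\alpha u\rangle$ loses its top-order contribution because $\nabla\cdot u=0$; we bound the commutator by the tame estimate~\eqref{eq:Rproduct} together with $\|\nabla u\|_\infty\le C\|u\|_{H^3}$. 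This gives
\[
\tfrac{d}{dt}\|u\|_{H^m}^2+\nu\|\nabla u\|_{H^m}^2\le C_m\|u\|_{H^3}\|u\|_{H^m}^2+C\|f\|_{H^m}\|u\|_{H^m},
\]
which is uniform in $\delta$. Closing first at $m=3$ yields a time of existence $S_0$ that depends only on $\|a\|_{H^3}$ and on $f$. On $[0,S_0]$ all $H^m$ norms then stay bounded, since each inequality is linear in $\|u\|_{H^m}^2$. Compactness, by Aubin--Lions on bounded sets combined with the uniform bounds, passes $\delta\to0$. Time regularity comes from the equation: $\partial_t u$ is expressed through spatial derivatives, and smoothness of $f$ in time into $H^\infty$ lets us bootstrap. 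The pressure is then recovered as in Section~\ref{sec:analytic}.

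\emph{Uniqueness.} Let $u_1,u_2$ be two solutions and $w=u_1-u_2$. The $L^2$ energy estimate gives
\[
\tfrac{d}{dt}\|w\|_2^2\le 2\|\nabla u_2\|_\infty\|w\|_2^2,
\]
and Gronwall's inequality forces $w=0$. Gluing the local solutions by uniqueness produces the maximal lifespan.

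\emph{Continuation criterion.} This is the main obstacle, because the hypothesis~\eqref{eq:criterion} controls only $L^2_tH^2$, whereas the estimate above needs $\|u\|_{H^3}$ in the coefficient. We handle it by rewriting the commutator bound so that the coefficient involves only $\|\nabla u\|_\infty\le C\|u\|_{H^2}^{1/2}\|u\|_{H^3}^{1/2}$, or alternatively $\|u\|_{H^2}^2$. We work first at level $m=2$, pairing with $\Delta^2u$. The worst term is
\[
\int|\nabla u|\,|\nabla^2u|^2\le\|\nabla u\|_\infty\|u\|_{H^2}^2\le C\|u\|_{H^2}^{5/2}\|u\|_{H^3}^{1/2}.
\]
Young's inequality absorbs the $H^3$ factor into the dissipation $\nu\|u\|_{H^3}^2$, leaving a remainder bounded by $C\nu^{-1/3}\|u\|_{H^2}^{10/3}$. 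That power is too high, so instead we use
\[
\|\nabla u\|_{L^4}^2\le C\|u\|_{H^2}\|u\|_{H^1}
\]
together with a Ladyzhenskaya/Gagliardo--Nirenberg split. This reaches the form
\[
\tfrac{d}{dt}\|u\|_{H^2}^2+\nu\|u\|_{H^3}^2\le C(1+\|u\|_{H^2}^2)\|u\|_{H^2}^2+C\|f\|_{H^2}^2.
\]
Gronwall's inequality with $\int_0^S\|u\|_{H^2}^2\,dt<\infty$ then bounds $\sup_{t<S}\|u\|_{H^2}$ and $\int_0^S\|u\|_{H^3}^2\,dt$. Because the $H^m$ inequality has coefficient $\|u\|_{H^3}$, which is now in $L^2_t\subset L^1_t$ on $[0,S)$, all higher $H^m$ norms stay bounded up to $S$. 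The local existence step then restarts from time $S-\epsilon$ with a uniform existence time, which extends the solution smoothly beyond $S$.

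The lower-order norms, namely the $L^2$ energy and $H^1$, are controlled by the energy equality and by $\|u\|_{H^1}\le\|u\|_{H^2}$, so the criterion closes. The delicate bookkeeping is the $H^2$ step just described; everything else is standard.
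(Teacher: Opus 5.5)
The gap is in the step you yourself flag as the main obstacle: the $H^2$ estimate in the continuation criterion. Your first computation is correct and already suffices. The Young remainder $C\nu^{-1/3}\norm{u}_{H^2}^{10/3}$ equals $C\nu^{-1/3}\norm{u}_{H^2}^{4/3}\cdot\norm{u}_{H^2}^{2}$, and H\"older gives $\int_0^S\norm{u}_{H^2}^{4/3}\dd t\le S^{1/3}\bigl(\int_0^S\norm{u}_{H^2}^{2}\dd t\bigr)^{2/3}<\infty$, so Gr\"onwall closes under \eqref{eq:criterion}. The power $10/3$ is not ``too high''; it is below the quartic power that full absorption of the derivative produces.

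The replacement you adopt instead rests on $\norm{\nabla u}_{L^4}^2\le C\norm{u}_{H^2}\norm{u}_{H^1}$. That is the two-dimensional Ladyzhenskaya scaling, and it is false in three dimensions. For $u_\lambda=u(\lambda\,\cdot)$ with $\lambda\to\infty$, the left side grows like $\lambda^{1/2}$ while $\norm{u_\lambda}_{H^2}\norm{u_\lambda}_{H^1}$ stays bounded; the correct bound is $\norm{\nabla u}_{L^4}^2\le C\norm{\nabla u}_2^{1/2}\norm{\nabla^2u}_2^{3/2}$. So the unspecified ``Ladyzhenskaya/Gagliardo--Nirenberg split'' does not justify your displayed $H^2$ inequality as written, even though that inequality is true.

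The paper obtains the needed bound with no interpolation and no commutator. With $J=(I-\Delta)^{1/2}$, pair $J^m$ of \eqref{eq:projected} with $J^mu$ and move one derivative onto the test function, so the nonlinear term becomes $\ip{J^m(u\otimes u)}{\nabla J^mu}$. The tame estimate \eqref{eq:Rproduct} bounds it by $C\norm{u}_{H^2}\norm{u}_{H^m}\norm{\nabla J^mu}_2$, and Young absorbs $\norm{\nabla J^mu}_2$ into the dissipation. This yields \eqref{eq:highcontinuation} for every $m\ge3$ at once, with coefficient $\norm{u}_{H^2}^2$, which is integrable by hypothesis. No separate $H^2$ step is needed; the same computation at $m=2$ gives exactly your displayed inequality. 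Your second stage, with the Kato coefficient $\norm{u}_{H^3}\in L^2_t$, is correct but becomes unnecessary.

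For local existence, uniqueness and maximality the paper simply cites Tao, after rescaling the viscosity and removing the periodic mean. Your self-contained construction is a legitimate alternative, but there is a slip in it. With the nonlinearity $\PP\nabla\cdot(J_\delta u\otimes J_\delta u)$ tested against $\partial^\alpha u$, the top-order term $\ip{(J_\delta u\cdot\nabla)\partial^\alpha J_\delta u}{\partial^\alpha u}$ does not cancel, since the transported field is $J_\delta u$ rather than $u$. Either apply $J_\delta$ outside as well, or absorb the lost derivative into $\nu\norm{\nabla\partial^\alpha u}_2^2$ as above; the latter gives $\delta$-uniform bounds with $\nu$-dependent constants.
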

\begin{proof}
Apply Tao~\cite[Theorems~5.1(ii)--(iv), 5.4(ii)--(iv)]{tao2013} to
$\PP f$. The proof of Theorem~5.4(iv), p.~53, explicitly permits
$a\in H^k$ and $f\in C^j_tH^k_x$ for all $j,k$; the resulting smoothness
is on one common interval. Reduce viscosity to one by
$\tau=\nu t$, $\widetilde u=\nu^{-1}u$, and
$(\widetilde p,\widetilde f)=\nu^{-2}(p,f)$ with the corresponding time change.
For nonzero periodic mean, use Tao's mean reduction
\cite[Section~3, (36)--(38)]{tao2013}: set
$m(t)=\int_{\TT}a+\int_0^t\int_{\TT}f$ and translate by
$X(t)=\int_0^t m$, subtracting $m$ from the velocity and $m'$ from the force.
Pressure is recovered as above. Uniqueness and maximal development follow
from the same theorems and the overlap argument of Corollary~5.2.

For continuation, the standard $H^m$ energy estimate, Young's inequality
and \eqref{eq:Rproduct} give, for $m\ge3$,
\begin{equation}\label{eq:highcontinuation}
 (\norm{u}_{H^m})'\le C_{m,\nu}\norm{u}_{H^2}^2\norm{u}_{H^m}
                         +\norm{f}_{H^m}.
\end{equation}
Regularize the norm at zero. Gr\"onwall and \eqref{eq:criterion} bound
every $H^m$ norm uniformly up to $S$. Tao's quantitative local existence
then gives a uniform positive time when restarting at $t_0\uparrow S$;
the fixed force is smooth on $[0,S+1]$. Uniqueness glues one such solution
past $S$.
\end{proof}

\subsection{Energy and initial vanishing of the compact solution}
We use the fixed solution $(U,P,F)$ chosen after
Theorem~\ref{thm:packet}. Its energy estimate supplies the time-integrated
dissipation bound and the initial vanishing needed for rescaling.

\begin{lemma}\label{lem:packetenergy}
The building block satisfies
\[
 M:=\sup_{0\le t<1}\norm{U(t)}_2<\infty,\qquad
 D:=\norm{\nabla U}_{L^2((0,1)\times\R^3)}<\infty.
\]
Both $U$ and $P$ vanish on an initial time interval and therefore extend smoothly by zero to negative times.
\end{lemma}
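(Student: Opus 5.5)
The bound $M<\infty$ is the first part of \eqref{eq:packetblowup} in Theorem~\ref{thm:packet}, so only $D<\infty$ and the initial vanishing need an argument. I will prove the dissipation bound with the classical energy identity, which is legitimate because $U(t)$ and $P(t)$ are smooth and supported in the fixed compact set $K$ for every $t<1$.

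\textbf{Energy identity.} Fix $t<1$. Take the inner product of \eqref{eq:packet} with $U$ and integrate over $\R^3\times(0,t)$; all integrations by parts are justified since the fields are smooth with support in $K$. The transport term vanishes because $\int (U\cdot\nabla)U\cdot U=\tfrac12\int U\cdot\nabla|U|^2=0$ by $\nabla\cdot U=0$. The pressure term vanishes for the same reason. Using $U(0)=0$, this gives
\[
 \tfrac12\norm{U(t)}_2^2+\nu\int_0^t\norm{\nabla U}_2^2
 =\int_0^t\ip{F}{U}\le M\norm{F}_{L^1((0,\infty);L^2)}.
\]
The right side is finite because $F\in C_c^\infty$. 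Let $t\uparrow1$ and apply monotone convergence to obtain $D^2\le \nu^{-1}M\norm{F}_{L^1_tL^2_x}<\infty$.

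\textbf{Initial vanishing.} Since $\supp F$ is a compact subset of $\R^3\times(0,\infty)$, there is $t_0\in(0,1)$ with $F=0$ on $\R^3\times[0,t_0]$. On $[0,t_0]$, $U$ is then a smooth, compactly supported, divergence-free solution of the unforced equation with $U(0)=0$. The energy identity above, with zero right side, gives $\norm{U(t)}_2=0$, so $U\equiv0$ on $[0,t_0]$.

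For the pressure, take the divergence of \eqref{eq:packet}: $\Delta P=\nabla\cdot F-\partial_i\partial_j(U_iU_j)=0$ on $[0,t_0]$. Thus $P(\cdot,t)$ is harmonic and compactly supported, hence zero by Liouville's theorem. So $U$ and $P$ vanish on $[0,t_0]$, and all of their time derivatives vanish at $t=0$. Extending them by zero to negative times therefore gives smooth fields on $\R^3\times(-\infty,1)$, and the equation still holds there with $F$ extended by zero.

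\textbf{Main obstacle.} The only delicate point is making sure the energy computation does not depend on any unproved decay. Compact support in $K$, uniformly in $t$, removes every boundary term. Smoothness on $[0,1)$ makes each identity classical on $[0,t]$ for $t<1$. The passage to $t=1$ uses only monotonicity of the dissipation integral together with the uniform bound $M$.
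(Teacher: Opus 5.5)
Your proof is correct and follows the paper's argument: the classical energy identity (justified by the uniform compact support), monotone convergence as $t\uparrow1$, and the same identity with zero forcing before $\supp F$, which gives $U=0$ and then $P=0$ by compact support. The only minor differences are that the paper derives $M$ itself from the energy inequality via $\norm{U(t)}_2\le\int_0^t\norm{F(s)}_2\dd s$ rather than quoting \eqref{eq:packetblowup}, and it reads off $\nabla P=0$ directly from the equation instead of passing through harmonicity of $P$.
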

\begin{proof}
The classical energy identity and $U(0)=0$ give, with
$N(t)=\int_0^t\|F(s)\|_2\dd s$, the bounds $\|U(t)\|_2\le N(t)$ and
\begin{equation}\label{eq:packetenergy}
 \norm{U(t)}_2^2+2\nu\int_0^t\norm{\nabla U(s)}_2^2\dd s
 \le 2\int_0^t\norm{F(s)}_2N(s)\dd s=N(t)^2.
\end{equation}
Compact support justifies the identity; regularizing $\|U\|_2$ justifies
the first bound at zero. Since $F$ is time compact, monotone convergence
gives finite dissipation up to time one. Before the support of $F$, the
same estimate gives $U=0$; the equation and compact support then give $P=0$.
Both fields therefore extend smoothly by zero to negative times.
\end{proof}

\section{The torus}\label{sec:torus}
All spatial norms in this section are on $\TT$, unless another domain is
shown. The spaces $\XX$, $\FF$, and $\BB_{\nu,a,T}$ are those defined in
Section~\ref{sec:preliminaries}.
\subsection{Statement of the density theorem}
\begin{theorem}[Sobolev density threshold]\label{thm:main}
Equip $\FF$ with its relative $L^1(0,\infty;H^s(\TT))$ norm topology.
\begin{enumerate}[label=(\roman*),nosep]
\item For every fixed $a\in\XX$, the set $\BB_{\nu,a,T}$ is dense if $s<1/2$.
\item For zero initial velocity,
\[
 \BB^0_{\nu,T}\text{ is dense in }\FF
 \quad\Longleftrightarrow\quad s<\tfrac12.
\]
\end{enumerate}
\end{theorem}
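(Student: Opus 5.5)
The density half (part (i), hence the "if" direction of (ii)) is a gluing construction. Fix $a\in\XX$, $f\in\FF$ and $\delta>0$. If $T^\nu_{\max}(a,f)\le T$ there is nothing to prove, so assume the solution $v$ is smooth through $T$. Pick a point $x_0\in\TT$ and small $\eps>0$, and set $t_\eps=T-\eps^2$. Work in a coordinate ball $B$ around $x_0$ (on $\TT$ a small ball lifts to $\R^3$). There $v$ has a local vector potential $A$ with $\nabla\times A=v$; I would take the Poincaré-type potential on the ball. Choose a smooth Urysohn cutoff $\theta_\eps$ in space, equal to $1$ on an $\eps$-neighbourhood of $x_0+\eps K$ and supported in a $2\eps$-type neighbourhood. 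Choose also a time cutoff $\eta_\eps$, vanishing for $t\le t_\eps-\eps^2$ and equal to $1$ for $t\ge t_\eps-\eps^2/2$. Set
\[
 w_\eps=-\nabla\times(\eta_\eps\theta_\eps A),\qquad U_\eps(x,t)=\eps^{-1}U\bigl((x-x_0)/\eps,(t-t_\eps)/\eps^2\bigr),\qquad u_\eps=v+w_\eps+U_\eps.
\]
By Lemma~\ref{lem:packetenergy}, $U_\eps$ vanishes before a small positive rescaled time and extends by zero. We also need the support of $U_\eps$ to lie where $\eta_\eps=1$; this holds provided the time support of $F$ starts after the cutoff window, which can be arranged by shifting the window. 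The field $u_\eps$ is divergence free, equals $v$ up to time $t_\eps-\eps^2$ (so the initial velocity $a$ is preserved), and satisfies $v+w_\eps=0$ on $\supp U_\eps$. Hence $(v+w_\eps)\cdot\nabla U_\eps$ and $U_\eps\cdot\nabla(v+w_\eps)$ both vanish. The resulting force is
\[
 f_\eps=f+F_\eps+g_\eps,
\]
where $F_\eps$ is the rescaled $F$ (which carries the factor $\eps^{-3}$) and $g_\eps$ collects the equation residual of $v+w_\eps$. This residual is smooth and supported in a set of size $O(\eps)\times O(\eps^2)$, and it remains smooth after $T$. It lies in $\FF$ since $F$ is time compact. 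The velocity $u_\eps$ satisfies $\|u_\eps\|_\infty\to\infty$ as $t\uparrow T$, so by uniqueness in Proposition~\ref{prop:local}, $T^\nu_{\max}(a,f_\eps)\le T$.

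The estimates are by scaling. Plancherel and dilation give $\|F_\eps(t)\|_{\dot H^s}=\eps^{-3}\eps^{3/2-s}\|F(\tau)\|_{\dot H^s}$. Integrating in $t$ with $dt=\eps^2d\tau$ gives $\|F_\eps\|_{L^1\dot H^s}=\eps^{1/2-s}\|F\|_{L^1\dot H^s}$, which tends to $0$ exactly when $s<1/2$. For negative $s$ on the torus, the inhomogeneous norm is bounded by the $L^2$ norm times a constant via the embedding, but that does not by itself help. Instead I would use $\|\cdot\|_{H^s}\le\|\cdot\|_{L^2}$ for $s\le 0$ and the already good $L^2$ bound with factor $\eps^{1/2}$. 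For $0\le s<1/2$ I would interpolate between $L^2$ and $\dot H^1$ and add the $L^2$ part. The zero mode is fine, since the $L^1$ mean is controlled by $\eps^{1/2}$ as well.

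The term $g_\eps$ needs care. It is made of $\partial_tw_\eps$, $-\nu\Delta w_\eps$, and the cross terms of $(v+w_\eps)\otimes(v+w_\eps)-v\otimes v$. Since $A$ is smooth and we may normalise $A(x_0,t)=0$, we have $|A|=O(\eps)$ on the support. Each derivative of a cutoff costs $\eps^{-1}$ in space or $\eps^{-2}$ in time, so $\|g_\eps(t)\|_{L^2}\lesssim\eps^{-2}\cdot\eps\cdot\eps^{3/2}$ and $\|g_\eps(t)\|_{\dot H^1}\lesssim\eps^{-3}\cdot\eps\cdot\eps^{3/2}$, over a time window of length $\eps^2$. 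This gives $\|g_\eps\|_{L^1H^s}\lesssim\eps^{3/2-s}$, one power better than $F_\eps$. I expect the main bookkeeping obstacle to be arranging the time cutoff so that $\partial_t\eta_\eps$ falls where the $O(\eps)$ potential bound makes this term small. The smallness of $A$ itself requires subtracting its value at $x_0$, which is a gauge choice.

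For the "only if" direction of (ii), take $s\ge1/2$ and $a=0$. It suffices to show that a small ball around $f=0$ in $L^1H^{1/2}$ contains no force in $\BB^0_{\nu,T}$. For small $\|f\|_{L^1_t\dot H^{1/2}}$, including the mean via Tao's reduction, the projected equation~\eqref{eq:projected} has a global small solution in the critical class $L^\infty_t\dot H^{1/2}\cap L^2_t\dot H^{3/2}$. This comes from the energy estimate in $\dot H^{1/2}$ together with the embeddings~\eqref{eq:embeddings}: the trilinear term is bounded by $\|u\|_{\dot H^{1/2}}\|u\|_{\dot H^{3/2}}^2$ and is absorbed by dissipation when $\|u\|_{\dot H^{1/2}}$ stays small, by a continuity argument. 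The $L^2_tH^{3/2}$ bound together with the energy bound gives $\int\|u\|_{H^2}^2<\infty$ only after a further step. For that I would first bootstrap $L^2_t\dot H^{3/2}$ to $\dot H^{1}$ or $H^2$ via parabolic smoothing on $[\tau,T]$, using that $f$ is smooth, then apply criterion~\eqref{eq:criterion}. So $T^\nu_{\max}>T$, and $0$ is not in the closure. This non-density step is the analytic heart of the theorem.
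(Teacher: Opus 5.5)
Your density half is the paper's construction: the radial potential with $A(x_0,t)=0$, $w_\eps=-\nabla\times(\eta_\eps\theta_\eps A)$, $u_\eps=v+w_\eps+U_\eps$, the old force plus $F_\eps$ plus the residual of $v+w_\eps$, the factors $\eps^{1/2-s}$ and $\eps^{3/2-s}$, and $\norm{z}_{H^s}\le\norm{z}_2$ for $s<0$. Interpolating $\norm{z}_{\dot H^s(\TT)}\le\norm{z}_2^{1-s}\norm{\nabla z}_2^s$ directly on the torus is a legitimate shortcut past Lemma~\ref{lem:localization}. It works because a single periodized copy has the same $L^2$ and gradient norms on $\TT$ as on $\R^3$.

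Your non-density half takes a different route. The paper proves Proposition~\ref{prop:critical} by citing Danchin's forced Fujita--Kato fixed-point theorem, adapted to mean-zero periodic fields, and then appeals to persistence of regularity. You instead run an a priori estimate on the classical solution of Proposition~\ref{prop:local}. After the mean reduction (force $h$, $\norm{h}_{L^1\dot H^{1/2}}\le\rho$), the $L^3$ embeddings in \eqref{eq:embeddings} give $\frac{d}{dt}\norm{u}_{\dot H^{1/2}}^2+2\nu\norm{u}_{\dot H^{3/2}}^2\le C\norm{u}_{\dot H^{1/2}}\norm{u}_{\dot H^{3/2}}^2+2\norm{h}_{\dot H^{1/2}}\norm{u}_{\dot H^{1/2}}$. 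A continuity argument then keeps $\norm{u(t)}_{\dot H^{1/2}}\le\rho$, with $u\in L^2_t\dot H^{3/2}$, once $\rho<c\nu$.

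Your version is self-contained and uses only tools already collected in Section~\ref{sec:preliminaries}. It avoids transferring Danchin's whole-space proof to $\TT$, and it avoids identifying a critical mild solution with the classical one. The citation is shorter and yields existence for rough critical data, but that is not needed here, since local classical existence is already available.

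Your ``further step'' needs no parabolic smoothing on $[\tau,T]$. The third embedding in \eqref{eq:embeddings} gives $|\ip{(u\cdot\nabla)u}{\Delta u}|\le C\norm{u}_{\dot H^{1/2}}\norm{\Delta u}_2^2$. So the same smallness absorbs the nonlinearity in the $H^1$ estimate from $t=0$, giving $\int_0^\infty\norm{\Delta u}_2^2\dd t\le C\nu^{-2}\int_0^\infty\norm{h}_2^2\dd t$ and hence \eqref{eq:criterion}.

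Two points in the gluing half need correcting.

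\textbf{The time cutoff must switch off.} As written, your $\eta_\eps$ equals one for all $t\ge t_\eps-\eps^2/2$. Then $w_\eps$ and the residual persist after $T$, are not compactly supported in time, and require $v$ beyond $T+\delta$, possibly beyond $T^\nu_{\max}(a,f)$. So the new force would not lie in $\FF$. The cutoff must switch off after $T$ and before $T+\delta$. For instance, take it equal to one on $[T-3\eps^2/2,T+\eps^2]$ and supported in $(T-2\eps^2,T+2\eps^2)$, in the manner of the paper's $\eta_\eps(t)=\eta((t-T)/\eps^2)$. This cutoff, not the time compactness of $F$, is what puts the residual in $\FF$ and gives the $O(\eps^2)$ time support you claim.

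\textbf{Your fixed-time residual bounds drop the curl's derivative.} Since $\nabla\times(\theta_\eps A)=\nabla\theta_\eps\times A+\theta_\eps v=O(1)$, the term $\partial_t\eta_\eps\,\nabla\times(\theta_\eps A)$ already has amplitude $\eps^{-2}$. Hence $\norm{g_\eps(t)}_2\lesssim\eps^{-1/2}$ and $\norm{g_\eps(t)}_{\dot H^1}\lesssim\eps^{-3/2}$. Your stated numbers would give $\eps^{5/2-s}$. The corrected ones give exactly your claimed $\eps^{3/2-s}$, in agreement with \eqref{eq:HHs}, which is ample for $s<1/2$.
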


We first construct a blowup solution close to a given smooth solution. The
force estimates yield density below the stated threshold. A separate
small-force estimate then proves non-density at and above it.
\subsection{Localization and scaling}\label{sec:packet}
\begin{lemma}\label{lem:localization}
Let $B$ be a fixed coordinate ball whose closure lies inside a torus fundamental cube. If $z$ is smooth and supported in $B$, let $z_{\R}$ be its zero extension in that cube to $\R^3$, and let $z_{\TT}$ be its periodization. For $0<s<1$,
\begin{equation}\label{eq:localization}
 \norm{z_{\TT}}_{H^s(\TT)}
 \le C_{s,B}\bigl(\norm{z_{\R}}_2+
                   \norm{z_{\R}}_{\dot H^s(\R^3)}\bigr).
\end{equation}
The constant is uniform as the support shrinks inside $B$. At $s=0$ the $L^2$ norms are equal; at $s=1$ the corresponding equality holds for the $L^2$ gradient norms.
\end{lemma}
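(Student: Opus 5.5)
The plan is to compare the periodic and whole-space norms through a fixed smooth cutoff, and then interpolate between the two endpoint cases.

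\emph{Endpoints.} Since $\overline B$ lies inside a fundamental cube, the periodization $z_{\TT}$ restricted to that cube is just $z_{\R}$. Hence the $L^2$ norms agree, which is the case $s=0$. Differentiation commutes with periodization, so the gradient $L^2$ norms also agree, which is the case $s=1$. With the paper's conventions, $\norm{z_{\TT}}_{H^1(\TT)}^2=\norm{z_{\R}}_2^2+\norm{\nabla z_{\R}}_2^2$.

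\emph{Intermediate $s$.} For $0<s<1$ I would avoid interpolating the support-restricted class directly. Instead, fix $\chi\in C_c^\infty$ supported inside the open fundamental cube with $\chi=1$ on a neighbourhood of $\overline B$. Consider the map $E\colon g\mapsto(\chi g)_{\TT}$, which sends functions on $\R^3$ to periodic functions.
\begin{enumerate}
\item By the endpoint computation, $E$ is bounded $L^2(\R^3)\to L^2(\TT)$ with norm at most $1$.
\item Using the Leibniz rule, $E$ is bounded $H^1(\R^3)\to H^1(\TT)$ with norm depending only on $\norm{\nabla\chi}_\infty$.
\item Since the Sobolev scales on both sides are complex interpolation scales (defined through Fourier multipliers $\langle\xi\rangle^s$ and $\langle 2\pi k\rangle^s$), $E$ is bounded $H^s(\R^3)\to H^s(\TT)$ with a constant $C_{s,\chi}$.
\end{enumerate}

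\emph{Conclusion.} For $z$ supported in $B$ we have $\chi z_{\R}=z_{\R}$, so $z_{\TT}=E z_{\R}$. Therefore
\[
\norm{z_{\TT}}_{H^s(\TT)}\le C_{s,B}\norm{z_{\R}}_{H^s(\R^3)}\le C_{s,B}\bigl(\norm{z_{\R}}_2+\norm{z_{\R}}_{\dot H^s}\bigr),
\]
using $(1+|\xi|^2)^s\le 1+|\xi|^{2s}$ for $0<s<1$. The constant depends only on $\chi$, hence only on $B$. This makes it uniform as the support shrinks inside $B$.

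\emph{Main obstacle.} The delicate step is justifying the interpolation identification on the torus side. If I want to avoid abstract interpolation, the alternative is an explicit Gagliardo seminorm argument. On the torus, $\norm{z_{\TT}}_{\dot H^s}^2$ equals $c_s\iint_{Q\times\R^3}|z_{\TT}(x)-z_{\TT}(y)|^2|x-y|^{-3-2s}$, with $Q$ a fundamental cube. I would split this into $y$ in the cube neighbourhood, where the integrand is bounded by the $\R^3$ Gagliardo integral, and $y$ far away, where $|x-y|$ is bounded below by $\operatorname{dist}(B,\partial Q)>0$. The far piece is controlled by $C\norm{z}_2^2$. I would try the interpolation route first and keep the Gagliardo computation as a fallback.
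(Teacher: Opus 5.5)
Your proof is correct and follows the paper's route. A fixed cutoff $\chi$ equal to one near $\overline B$ gives a local-to-global map bounded from $H^s(\R^3)$ to $H^s(\TT)$; the inequality $(1+|\xi|^2)^s\le 1+|\xi|^{2s}$ and the fact that $\chi$ is fixed then give the estimate and its uniformity, and the endpoints come from the single copy of the support. The only difference is how you get the fractional-order bound: you interpolate your $L^2$ and $H^1$ bounds, while the paper cites the chart/partition-of-unity characterization of Sobolev norms on compact manifolds. Your interpolation step is not actually delicate, since both scales are Fourier-weighted $L^2$ spaces whose complex interpolants the Stein--Weiss theorem identifies exactly.
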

\begin{proof}
Sobolev norms on a compact manifold are equivalent to the norms obtained
by a fixed coordinate cover and partition of unity
\cite[Chapter~4, Sections~2--3]{taylor2011}. Choose one fixed cutoff equal
to one near $\overline B$ and supported in the coordinate chart. The resulting
local-to-global map is bounded from $H^s(\R^3)$ to $H^s(\TT)$.
Since $\|z\|_{H^s(\R^3)}\le C_s(\|z\|_2+\|z\|_{\dot H^s})$,
this gives \eqref{eq:localization}. The chart and cutoff are fixed, so the
constant is independent of a smaller support of $z$. At orders zero and
one, integration over the single copy of the support gives the stated identities.
\end{proof}

Choose a compact set $K_*$ containing $K$ and the spatial projection of
$\supp F$. Fix $x_0\in B$, where $B$ is the ball in
Lemma~\ref{lem:localization}. For sufficiently small $\eps>0$, require
\[
 2\eps^2<T,\qquad x_0+\eps K_*\subset B,
 \qquad t_\eps=T-\eps^2.
\]
We extend $F$ by zero to nonpositive source times; this extension is smooth
because its temporal support is compact in $(0,\infty)$. Using this
extension and the negative-time zero extensions of $U,P$ from
Lemma~\ref{lem:packetenergy}, define
\begin{align}
 U_\eps(x,t)&=\eps^{-1}U\left(\frac{x-x_0}{\eps},
                              \frac{t-t_\eps}{\eps^2}\right),\nonumber\\
 P_\eps(x,t)&=\eps^{-2}P\left(\frac{x-x_0}{\eps},
                              \frac{t-t_\eps}{\eps^2}\right),\label{eq:scaling}\\
 F_\eps(x,t)&=\eps^{-3}F\left(\frac{x-x_0}{\eps},
                              \frac{t-t_\eps}{\eps^2}\right).\nonumber
\end{align}
The first two fields are used for $t<T$; the force is defined for all positive times. Place the rescaled whole-space solution in $B$ and then periodize it. The torus contains a single copy of its support.

\begin{proposition}\label{prop:scaling}
The fields in \eqref{eq:scaling} solve the periodic momentum equation at viscosity $\nu$, start from zero, and have unbounded speed at $T$. A spatially constant pressure adjustment enforces the mean-zero normalization. Their norms satisfy
\begin{align}
 \norm{U_\eps}_{L^\infty(0,T;L^2)}
    &=\eps^{1/2}M,&
 \norm{\nabla U_\eps}_{L^2(0,T;L^2)}
    &=\eps^{1/2}D,\label{eq:packetEscale}\\
 \norm{F_\eps}_{L^q(0,\infty;L^p)}
    &=\eps^{\alpha(p,q)}\norm{F}_{L^q(0,\infty;L^p)},&
 \alpha(p,q)&=-3+\frac3p+\frac2q,\label{eq:packetFscale}
\end{align}
for $1\le p,q\le\infty$. For $0\le s\le1$,
\begin{equation}\label{eq:packetHs}
 \norm{F_\eps}_{L^1(0,\infty;H^s(\TT))}
 \le C_s\bigl(\eps^{1/2}+\eps^{1/2-s}\bigr).
\end{equation}
In particular the force tends to zero in $L^1_tH^s_x$ for every $s<1/2$, including negative $s$.
\end{proposition}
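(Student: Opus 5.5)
The proof is a direct change of variables, done first on $\R^3$ and then transferred to $\TT$ with Lemma~\ref{lem:localization}.

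\emph{Equation, initial value and blowup.} The scaling $u\mapsto\eps^{-1}u(\cdot/\eps,\cdot/\eps^2)$, $p\mapsto\eps^{-2}p$, $f\mapsto\eps^{-3}f$ preserves \eqref{eq:packet} at fixed $\nu$; the time shift by $t_\eps$ and the space shift by $x_0$ also preserve it. So $(U_\eps,P_\eps,F_\eps)$ solves the whole-space equation on $(-\infty,T)$. This uses the smooth zero extensions of Lemma~\ref{lem:packetenergy} for times before $t_\eps$, where $U_\eps=P_\eps=F_\eps=0$. In particular $U_\eps(0)=0$, since $2\eps^2<T$ gives $t_\eps>0$.

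\emph{Periodization.} For each $t<T$ the spatial supports lie in $x_0+\eps K_*\subset B$, a single copy inside a fundamental cube. Hence the periodized fields solve the periodic equation: all terms are local. Subtracting the spatial mean of $P_\eps(t)$ gives the mean-zero normalization and leaves $\nabla P_\eps$ unchanged. Finally, $\norm{U_\eps(t)}_\infty=\eps^{-1}\norm{U((t-t_\eps)/\eps^2)}_\infty$ and $(t-t_\eps)/\eps^2\uparrow1$ as $t\uparrow T$, so \eqref{eq:packetblowup} gives unbounded speed at $T$.

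\emph{Exact scalings.} Substitute $y=(x-x_0)/\eps$, $\tau=(t-t_\eps)/\eps^2$. Then $\norm{U_\eps(t)}_2=\eps^{-1}\eps^{3/2}\norm{U(\tau)}_2=\eps^{1/2}\norm{U(\tau)}_2$. Taking the supremum over $t\in(0,T)$ corresponds to $\tau\in(-t_\eps/\eps^2,1)$, where $U$ vanishes for $\tau\le0$, which gives $\eps^{1/2}M$. For the gradient, $\norm{\nabla U_\eps(t)}_2^2=\eps^{-4}\eps^3\norm{\nabla U(\tau)}_2^2$ and $\dd t=\eps^2\dd\tau$, which gives $\eps^{1/2}D$. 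For the force, $\norm{F_\eps(t)}_p=\eps^{-3+3/p}\norm{F(\tau)}_p$ and $\dd t=\eps^2\dd\tau$ contributes $\eps^{2/q}$. The negative-time part of $F$ is zero, so the $\tau$-integral is exactly $\norm{F}_{L^q(0,\infty;L^p)}$; the case $q=\infty$ is the same.

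\emph{Sobolev bound \eqref{eq:packetHs}.} Lemma~\ref{lem:localization} reduces the torus norm to $\norm{F_\eps(t)}_2+\norm{F_\eps(t)}_{\dot H^s(\R^3)}$, with a constant uniform in $\eps$ since the support shrinks inside $B$. The $L^2$ part integrates to $\eps^{\alpha(2,1)}=\eps^{1/2}$ times a constant. For the homogeneous part, the Fourier dilation formula gives $\norm{F_\eps(t)}_{\dot H^s}=\eps^{-3+3/2-s}\norm{F(\tau)}_{\dot H^s}$, and the time change contributes $\eps^2$, for a total of $\eps^{1/2-s}$. Here $\norm{F}_{L^1\dot H^s}<\infty$ because $F\in C_c^\infty$: for $0\le s\le1$ use $\norm{z}_{\dot H^s}\le\norm{z}_2^{1-s}\norm{\nabla z}_2^s$.

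\emph{Negative $s$ and the main obstacle.} For $s<0$ I would not use the lemma, which is stated only for $0\le s\le1$. Instead use $\norm{\cdot}_{H^s(\TT)}\le\norm{\cdot}_{L^2(\TT)}$, which gives a bound $C\eps^{1/2}\to0$. This yields the final claim for all $s<1/2$. The one step needing care is the uniform localization constant together with the $s=0,1$ endpoints, and that is supplied by Lemma~\ref{lem:localization}.
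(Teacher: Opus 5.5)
Your proposal is correct and takes essentially the same route as the paper. It uses the parabolic change of variables for the equation, the speed identity and the exact mixed-norm scalings, then the Fourier dilation factor $\eps^{-3/2-s}$ with the time factor $\eps^2$, transferred to $\TT$ by Lemma~\ref{lem:localization} for $0\le s\le1$, and finally $\norm{z}_{H^s}\le\norm{z}_2$ for $s<0$. Your interpolation argument for the finiteness of $\norm{F}_{L^1\dot H^s}$ is a small detail the paper leaves implicit in the proof.
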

\begin{proof}
Writing $y=(x-x_0)/\eps$ and $\sigma=(t-t_\eps)/\eps^2$, every term of the momentum equation gains the common factor $\eps^{-3}$. Incompressibility is preserved and no viscosity rescaling occurs. The temporal extension is smooth by initial vanishing. The speed identity
$\norm{U_\eps(t)}_\infty=\eps^{-1}\norm{U(\sigma)}_\infty$
proves unboundedness as $t\uparrow T$.

At a fixed time the velocity has amplitude factor $\eps^{-1}$ and spatial volume factor $\eps^3$, giving the first identity in \eqref{eq:packetEscale}. Its gradient has amplitude factor $\eps^{-2}$. Squaring and integrating in both space and time gives
$\eps^{-4}\eps^3\eps^2=\eps$ times the original dissipation integral, proving the second identity.

For finite $p,q$, the force's spatial norm gains $\eps^{-3+3/p}$ and its time norm gains $\eps^{2/q}$. The positive-time domain includes the whole transformed support because $t_\eps>0$ and the original force vanishes at negative times. This proves \eqref{eq:packetFscale}; the essential-supremum cases follow by the same change of variables with zero reciprocal exponents.

Euclidean homogeneous Sobolev scaling gives, for $0<s<1$,
\[
 \norm{\eps^{-3}F((\,\cdot-x_0)/\eps,\sigma)}
          _{\dot H^s(\R^3)}
 =\eps^{-3/2-s}\norm{F(\cdot,\sigma)}_{\dot H^s(\R^3)}.
\]
The Fourier transform gives the squared homogeneous scaling factor
$\eps^{-6}\eps^6\eps^{-3-2s}=\eps^{-3-2s}$.
Time integration contributes $\eps^2$. The inhomogeneous $L^2$ term has order $\eps^{1/2}$. Lemma~\ref{lem:localization} therefore proves \eqref{eq:packetHs}. At $s=0$ use the mixed-norm identity, and at $s=1$ use the gradient scaling. For $s<0$, the Fourier definition gives $\norm{z}_{H^s}\le\norm{z}_2$, so the $s=0$ estimate proves convergence without any homogeneous negative-order claim.
\end{proof}

The force amplitude is $\eps^{-3}\norm{F}_\infty$, even though its integrated norm tends to zero below the critical order. The homogeneous $L^1_t\dot H^{1/2}_x$ norm is scale invariant. Section~\ref{sec:critical} supplies the regularity estimate needed at that order.

\subsection{Gluing to a given smooth solution}\label{sec:insertion}
Fix a given smooth solution $(v,\pi,g)$ smooth on $[0,T+\delta]$, where
$\delta>0$, $g\in\FF$, and $v(0)=a\in\XX$. We first make the given velocity vanish near the support of $U_\eps$. This prevents nonlinear interaction terms from introducing a singularity into the new force.

Choose $R>0$ with $K_*\subset B(0,R)$. The smooth Urysohn lemma
\cite[Corollary~1.3.4]{petersen} gives an open neighborhood $V$ of $K_*$
and smooth cutoff functions satisfying
\[
\begin{aligned}
 &\theta\in C_c^\infty(\R^3),\quad 0\le\theta\le1,\quad
   \theta|_V=1,\quad \supp\theta\subset B(0,R),\\
 &\eta\in C_c^\infty(\R),\quad 0\le\eta\le1,\quad
   \eta|_{[-1,1]}=1,\quad \supp\eta\subset(-2,2).
\end{aligned}
\]
These choices are fixed independently of $\eps$; their rescalings below
localize the modification near $x_0$ in space and near $T$ in time.

\begin{lemma}\label{lem:potential}
Let $v$ be smooth and divergence free in a spatial ball centered at $x_0$. In that ball, with $y=x-x_0$, define
\begin{equation}\label{eq:potential}
 A(x,t)=\int_0^1 r\,v(x_0+ry,t)\times y\dd r.
\end{equation}
Then $\nabla\times A=v$. With the fixed cutoffs $\theta$ and $\eta$ above, put
\begin{equation}\label{eq:cutoff}
 \theta_\eps(x)=\theta((x-x_0)/\eps),\quad
 \eta_\eps(t)=\eta((t-T)/\eps^2),\quad
 w_\eps=-\nabla\times(\eta_\eps\theta_\eps A).
\end{equation}
For sufficiently small $\eps$, $w_\eps$ is a smooth, divergence-free field supported inside the coordinate ball and in
$(T-2\eps^2,T+2\eps^2)$. During the active interval before blowup of $U_\eps$,
\begin{equation}\label{eq:bgzero}
 v+w_\eps=0
 \quad\hbox{on an open neighborhood of }\supp U_\eps(t).
\end{equation}
\end{lemma}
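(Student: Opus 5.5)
We prove the curl identity by a direct computation, the Poincaré-lemma homotopy formula for divergence-free fields, and then read off the support and vanishing properties from the choice of cutoffs.

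\textbf{Curl identity.} Write $v_r=v(x_0+ry,t)$. Using $\nabla\times(b\times y)=b(\nabla\cdot y)-(b\cdot\nabla)y+(y\cdot\nabla)b-y(\nabla\cdot b)$ with $b=v_r$, we have $\nabla\cdot y=3$ and $(v_r\cdot\nabla)y=v_r$. Since $\nabla_y\cdot v_r=r(\nabla\cdot v)(x_0+ry)=0$, the last term drops. Also $(y\cdot\nabla_y)v_r=r\,\partial_r v_r$. Hence
\[
\nabla\times A=\int_0^1\bigl(2r\,v_r+r^2\partial_r v_r\bigr)\dd r=\int_0^1\partial_r\bigl(r^2v_r\bigr)\dd r=v(x,t).
\]
The ball is star-shaped about $x_0$, so $A$ is defined and smooth on the whole ball and on the time interval $[0,T+\delta]$. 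Differentiation under the integral is justified by smoothness on the compact segment.

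\textbf{Smoothness, divergence and support of $w_\eps$.} For small $\eps$ we have $x_0+\eps B(0,R)\subset B$, since $B$ is open and contains $x_0$. We also have $2\eps^2<\delta$ and $2\eps^2<T$. Then $\eta_\eps\theta_\eps A$ is smooth, compactly supported in $(x_0+\eps B(0,R))\times(T-2\eps^2,T+2\eps^2)$, and extends by zero to the torus. Therefore $w_\eps$ is smooth with the stated space and time support. It is divergence free because $\nabla\cdot\nabla\times=0$.

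\textbf{Vanishing near $\supp U_\eps$.} Let $V_\eps=x_0+\eps V$, an open set on which $\theta_\eps=1$. By \eqref{eq:scaling}, $\supp U_\eps(t)\subset x_0+\eps K\subset x_0+\eps K_*\subset V_\eps$ for $t<T$. Moreover $U_\eps(t)=0$ unless $\sigma=(t-t_\eps)/\eps^2\ge0$, that is $t\ge T-\eps^2$, because $U$ vanishes for negative times by Lemma~\ref{lem:packetenergy}. On the active interval $[T-\eps^2,T)$ we have $(t-T)/\eps^2\in[-1,0)$, so $\eta_\eps=1$. Hence on $V_\eps$,
\[
w_\eps=-\nabla\times A=-v,
\]
and $v+w_\eps=0$ on the open neighborhood $V_\eps$ of $\supp U_\eps(t)$. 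Outside the active interval $U_\eps(t)=0$, so the statement is vacuous there.

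\textbf{Main obstacle.} The only delicate point is bookkeeping on the torus. The ball must lie in a single fundamental cube so that the periodic $v$ is represented by a smooth local field there. The set $x_0+\eps B(0,R)$ must stay inside that ball, which is arranged by taking $\eps$ small. Finally, the time window must satisfy $T+2\eps^2<T+\delta$, so that $v$ and $A$ are smooth wherever $\eta_\eps\ne0$.
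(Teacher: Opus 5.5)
Your proposal is correct and follows the same route as the paper. The paper's proof likewise differentiates the radial formula (you make this explicit through $\partial_r(r^2v_r)$) and then uses the support and plateau argument, with $\eta_\eps=1$ on $[T-\eps^2,T)$ and $\theta_\eps=1$ on the open set $x_0+\eps V\supset x_0+\eps K_*$.
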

\begin{proof}
Differentiating the radial formula and using $\nabla\cdot v=0$ gives
$\nabla\times A=v$. Hence $w_\eps$ is divergence free, and on the common
plateau of the cutoffs it equals $-v$. For sufficiently small $\eps$ the
spatial support lies strictly inside the coordinate ball and the temporal
support lies in $(0,T+\delta)$, so extension by zero is smooth.
The rescaled building block is active only for $T-\eps^2\le t<T$,
where $\eta_\eps=1$; its support lies in the spatial plateau. This proves
\eqref{eq:bgzero} and the support assertions.
\end{proof}

\begin{lemma}\label{lem:correction}
For $w_\eps$ in Lemma~\ref{lem:potential}, define
\begin{align}
 H_\eps={}&\partial_tw_\eps-\nu\Delta w_\eps
 +(v\cdot\nabla)w_\eps+(w_\eps\cdot\nabla)v
 +(w_\eps\cdot\nabla)w_\eps.\label{eq:H}
\end{align}
The force $H_\eps$ is smooth across $T$ and extends by zero to all times outside its cutoff support. Its spatial support has volume $O(\eps^3)$, and its temporal support has length $O(\eps^2)$. For every spatial multi-index $\beta$ and integer $j\ge0$,
\begin{equation}\label{eq:derivativebounds}
 |\partial_t^j\partial_x^\beta w_\eps|
 \le C_{\beta,j}\eps^{-2j-|\beta|},\qquad
 |\partial_x^\beta H_\eps|
 \le C_\beta\eps^{-2-|\beta|}.
\end{equation}
In particular,
\begin{align}
 \norm{w_\eps}_{E_T}&\le C\eps^{3/2},\label{eq:wE}\\
 \norm{H_\eps}_{L^q(0,\infty;L^p)}
   &\le C_{p,q}\eps^{-2+3/p+2/q}
     =C_{p,q}\eps^{\alpha(p,q)+1},\label{eq:Hmixed}\\
 \norm{H_\eps}_{L^1(0,\infty;H^s)}
   &\le C_s\bigl(\eps^{3/2}+\eps^{3/2-s}\bigr)
       \quad(0\le s\le1).\label{eq:HHs}
\end{align}
All constants are independent of sufficiently small $\eps$.
\end{lemma}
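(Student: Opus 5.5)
The plan is to reduce everything to pointwise derivative bounds on $w_\eps$ and $H_\eps$ together with the support sizes, and then integrate. By Lemma~\ref{lem:potential}, $w_\eps$ is supported in $\{|x-x_0|\le R\eps\}\times(T-2\eps^2,T+2\eps^2)$. In space this set has volume $O(\eps^3)$, and in time it has length $O(\eps^2)$. Since $H_\eps$ is built from $w_\eps$ and its derivatives, multiplied by the smooth fields $v,\nabla v$, it has the same support. Smoothness across $T$ holds because $v$ is smooth on $[0,T+\delta]$ and the cutoffs are smooth; extension by zero is smooth for the same reason as in Lemma~\ref{lem:potential}.

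For the pointwise bounds, expand $w_\eps=-\eta_\eps\bigl(\nabla\theta_\eps\times A+\theta_\eps v\bigr)$ by the Leibniz rule. The key observation is that $A$ is small on the support. From \eqref{eq:potential}, $|\partial_t^j\partial_x^\gamma A|\le C|y|^{\max(1-|\gamma|,0)}$ on the fixed ball, since $A$ vanishes to first order at $x_0$ and $v$ is smooth on a compact set. Each $x$-derivative falling on $\theta_\eps$ costs $\eps^{-1}$, and each $t$-derivative falling on $\eta_\eps$ costs $\eps^{-2}$. Derivatives falling on $A$ or $v$ cost nothing. In the term $\nabla\theta_\eps\times A$, the factor $\eps^{-1}$ from $\nabla\theta_\eps$ is compensated by $|A|\le C\eps$. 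Hence $|w_\eps|\le C$ and, more generally, $|\partial_t^j\partial_x^\beta w_\eps|\le C\eps^{-2j-|\beta|}$.

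For $H_\eps$, each term in \eqref{eq:H} is then bounded as follows.
\begin{itemize}
\item $\partial_t w_\eps$ is $O(\eps^{-2})$.
\item $\Delta w_\eps$ is $O(\eps^{-2})$.
\item $(v\cdot\nabla)w_\eps$ and $(w_\eps\cdot\nabla)w_\eps$ are $O(\eps^{-1})$.
\item $(w_\eps\cdot\nabla)v$ is $O(1)$.
\end{itemize}
Each extra spatial derivative adds a factor $\eps^{-1}$, which gives \eqref{eq:derivativebounds}.

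Integrating these bounds over the support yields the norm estimates.
\begin{itemize}
\item For \eqref{eq:wE}: $\|w_\eps(t)\|_2\le C\eps^{3/2}$, and $\|\nabla w_\eps\|_{L^2_{t,x}}\le C\eps^{-1}\eps^{3/2}\eps=C\eps^{3/2}$.
\item For \eqref{eq:Hmixed}: $\|H_\eps(t)\|_p\le C\eps^{-2}\eps^{3/p}$, and the time integration contributes $\eps^{2/q}$.
\item For \eqref{eq:HHs} at $s=0$: the bound $\eps^{-2+3/2+2}=\eps^{3/2}$.
\item For \eqref{eq:HHs} at $s=1$: $\|\nabla H_\eps(t)\|_2\le C\eps^{-3}\eps^{3/2}$, so the $L^1_t$ norm is at most $C\eps^{1/2}=C\eps^{3/2-1}$.
\end{itemize}

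For $0<s<1$, I would apply the interpolation inequality $\|z\|_{\dot H^s}\le\|z\|_2^{1-s}\|\nabla z\|_2^s$ on $\R^3$ to the zero extension at each time. The support lies in the coordinate ball $B$ of Lemma~\ref{lem:localization} for small $\eps$, since $R\eps$ is small. That lemma then transfers the bound to $\TT$ with a uniform constant, and time integration gives $\eps^{3/2}+\eps^{3/2-s}$.

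The main obstacle is the uniform bound on $w_\eps$ itself, namely that $\nabla\theta_\eps\times A$ is $O(1)$ rather than $O(\eps^{-1})$. This requires the vanishing of $A$ at $x_0$ with bounds uniform in $t$ on the cutoff interval. Mixed derivatives also need care: one must check that $\partial_x^\gamma$ of $A$ with $|\gamma|\ge1$ is merely bounded, and that time derivatives of $A$ inherit the same $O(|y|)$ vanishing. Both follow by differentiating under the integral in \eqref{eq:potential}, using the smoothness of $v$ on the compact set $\overline{B(x_0,\rho)}\times[0,T+\delta]$.
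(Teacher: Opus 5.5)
Your proposal is correct and follows essentially the paper's argument. The paper encodes your first-order vanishing $|\partial_t^j\partial_x^\gamma A|\le C|y|^{\max(1-|\gamma|,0)}$ as $A(x,t)=\eps\mathcal A_\eps(z,\sigma)$, with $\mathcal A_\eps$ uniformly smooth in the rescaled variables $x=x_0+\eps z$, $t=T+\eps^2\sigma$; then $w_\eps=W_\eps(z,\sigma)$ and $H_\eps=\eps^{-2}Q_\eps(z,\sigma)$, and the same support-volume counting gives \eqref{eq:wE}--\eqref{eq:Hmixed}. For $0<s<1$ the paper uses Fourier dilation scaling of $Q_\eps$ instead of your interpolation inequality, which its introduction notes is an equivalent route, and both finish with Lemma~\ref{lem:localization}.
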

\begin{proof}
Set $x=x_0+\eps z$, $t=T+\eps^2\sigma$. The radial potential has
$A(x,t)=\eps\mathcal A_\eps(z,\sigma)$, where
$\mathcal A_\eps$ and all its derivatives are uniformly bounded on the
fixed cutoff support. Thus
\[
 w_\eps(x,t)=W_\eps(z,\sigma),\qquad
 W_\eps=-\nabla_z\times(\eta(\sigma)\theta(z)\mathcal A_\eps).
\]
Substitution in \eqref{eq:H} gives $H_\eps(x,t)=\eps^{-2}Q_\eps(z,\sigma)$,
where $Q_\eps$ is uniformly smooth and supported in the same fixed cylinder:
its terms are $\partial_\sigma W_\eps$, $-\nu\Delta_zW_\eps$ and
transport terms with nonnegative powers of $\eps$ and bounded coefficients.
This proves \eqref{eq:derivativebounds}. Changing variables gives
\eqref{eq:wE}--\eqref{eq:Hmixed}; applying Sobolev scaling to $Q_\eps$
and then Lemma~\ref{lem:localization} gives \eqref{eq:HHs}.
The smooth cutoffs give the asserted zero extensions in time and space.
\end{proof}

\begin{theorem}\label{thm:insertion}
Let $(v,\pi,g)$ be a periodic solution smooth on $[0,T+\delta]$, with
$\delta>0$, $g\in\FF$ and $v(0)=a\in\XX$. Fix any nonempty coordinate ball.
For all sufficiently small $\eps>0$, there are $g_\eps\in\FF$ and a solution $u_\eps$ with
\begin{enumerate}[label=(\roman*),nosep]
\item $T_{\max}^{\nu}(a,g_\eps)=T$ and
$\limsup_{t\uparrow T}\norm{u_\eps(t)}_\infty=\infty$;
\item $u_\eps=v$ for $0\le t\le T-2\eps^2$;
\item $u_\eps-v$ divergence free and supported, for every $t<T$, in a ball of diameter $O(\eps)$ inside the chosen ball;
\item the simultaneous bounds
\end{enumerate}
\begin{align}
 \norm{u_\eps-v}_{E_T}
   &\le (M+D)\eps^{1/2}+C\eps^{3/2},\label{eq:Eclose}\\
 \norm{g_\eps-g}_{L^q_tL^p_x}
   &\le C_{p,q}\bigl(\eps^{\alpha(p,q)}
                    +\eps^{\alpha(p,q)+1}\bigr),\label{eq:Fclose}\\
 \norm{g_\eps-g}_{L^1_tH^s_x}
   &\le C_s\bigl(\eps^{1/2-s}+\eps^{3/2-s}\bigr)
                         \quad(0\le s<1/2).\label{eq:Hsclose}
\end{align}
Here force norms are taken over $(0,\infty)$, and
$\alpha(p,q)=-3+3/p+2/q$ for $1\le p,q\le\infty$.
For $s<0$, the force difference also tends to zero in $L^1_tH^s_x$.
\end{theorem}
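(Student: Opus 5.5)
We take $u_\eps=v+w_\eps+U_\eps$ with pressure $\pi+\pi_w+P_\eps$, where $w_\eps$ is from Lemma~\ref{lem:potential} and $U_\eps,P_\eps,F_\eps$ are from \eqref{eq:scaling}, placed with $x_0$ and $x_0+\eps K_*$ inside the chosen ball. We define
\[
 g_\eps=g+H_\eps+F_\eps+\nabla\pi_w,
\]
with $H_\eps$ from \eqref{eq:H}. It is simplest to absorb no pressure into $w_\eps$, i.e.\ take $\pi_w=0$, so that $g_\eps=g+H_\eps+F_\eps$.

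The first step is the algebraic verification of the momentum equation on $[0,T)$. Expanding $(u_\eps\cdot\nabla)u_\eps$ gives the terms $(v\cdot\nabla)v$, the $v$–$w_\eps$ and $w_\eps$–$w_\eps$ terms already contained in $H_\eps$, the term $(U_\eps\cdot\nabla)U_\eps$, and the mixed terms $((v+w_\eps)\cdot\nabla)U_\eps+(U_\eps\cdot\nabla)(v+w_\eps)$. By \eqref{eq:bgzero}, $v+w_\eps$ vanishes on an open neighborhood of $\supp U_\eps(t)$. Hence both mixed terms vanish identically: the first because $\nabla U_\eps$ is supported where $v+w_\eps=0$, and the second because $U_\eps=0$ off that set while $\nabla(v+w_\eps)=0$ on it.

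Combining Proposition~\ref{prop:scaling} with the equation for $(v,\pi,g)$ then yields \eqref{eq:NS} with force $g_\eps$. The mean-zero pressure normalization is fixed by a constant shift. Divergence-freeness is immediate, and $u_\eps(0)=a$.

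For membership $g_\eps\in\FF$, note that $g\in\FF$ and that $F_\eps$ is smooth and compactly supported in $(0,\infty)$ because $t_\eps>0$. Also $H_\eps$ is smooth across $T$ with zero extension, by Lemma~\ref{lem:correction}. The point is that $g_\eps$ never involves $U_\eps$, so it stays smooth through $T$.

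Item (ii) follows because $w_\eps$ vanishes for $t\le T-2\eps^2$ and $U_\eps$ vanishes for $t\le t_\eps$, by the zero extension in Lemma~\ref{lem:packetenergy}. Item (iii) follows from the supports of $\theta_\eps$ and of $x_0+\eps K$.

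For (i), Proposition~\ref{prop:local} gives the maximal classical solution. By uniqueness it coincides with $u_\eps$ on $[0,T)$. Since $v+w_\eps$ is bounded, $\norm{u_\eps(t)}_\infty\ge\norm{U_\eps(t)}_\infty-C$, which is unbounded by Proposition~\ref{prop:scaling}. No smooth continuation past $T$ can exist, since it would be bounded near $T$. This gives $T^\nu_{\max}=T$.

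The bounds are then triangle inequalities. Estimate \eqref{eq:Eclose} comes from \eqref{eq:packetEscale} and \eqref{eq:wE}. Estimate \eqref{eq:Fclose} comes from \eqref{eq:packetFscale} and \eqref{eq:Hmixed}. Estimate \eqref{eq:Hsclose} comes from \eqref{eq:packetHs} and \eqref{eq:HHs}. For $s<1/2$ the terms $\eps^{1/2}$ and $\eps^{3/2}$ are dominated by $\eps^{1/2-s}$ and $\eps^{3/2-s}$ up to constants, using $\eps<1$. For $s<0$ we use $\norm{\cdot}_{H^s}\le\norm{\cdot}_{L^2}$ together with the $s=0$ bound.

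The main obstacle is the cancellation of the mixed terms. It requires \eqref{eq:bgzero} on an \emph{open} neighborhood of $\supp U_\eps(t)$ for every $t\in[t_\eps,T)$, so that derivatives of $v+w_\eps$ also vanish there. This in turn needs $\supp U_\eps(t)\subset x_0+\eps K\subset x_0+\eps V$ and $\eta_\eps=1$ on $[T-\eps^2,T]$. A secondary point is that uniqueness in Proposition~\ref{prop:local} must apply to the glued smooth solution, whose regularity on each $[0,S]$ with $S<T$ follows from smoothness of all three pieces.
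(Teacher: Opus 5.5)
Your proposal is correct and follows the paper's proof essentially step for step. It uses the same ansatz $u_\eps=v+w_\eps+U_\eps$, $p_\eps=\pi+P_\eps$, $g_\eps=g+H_\eps+F_\eps$, cancels both cross-advection terms via \eqref{eq:bgzero} on an open neighborhood of $\supp U_\eps(t)$, invokes uniqueness from Proposition~\ref{prop:local} to get lifespan exactly $T$, and obtains the three bounds by triangle inequalities, with $s\ge0$ and $\eps\le1$ absorbing the lower-order terms. The only cosmetic difference is your speed bound $\norm{u_\eps(t)}_\infty\ge\norm{U_\eps(t)}_\infty-C$, where the paper instead notes $u_\eps=U_\eps$ on the active support; both work.
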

\begin{proof}
Choose the rescaled blowup solution and the correction from Lemmas~\ref{lem:potential}--\ref{lem:correction}, with all supports inside the prescribed ball, and set
\begin{equation}\label{eq:insertion}
 u_\eps=v+w_\eps+U_\eps,\qquad
 p_\eps=\pi+P_\eps,\qquad
 g_\eps=g+H_\eps+F_\eps.
\end{equation}
Initially use the compact representative $P_\eps$; subtract its spatial mean to normalize $p_\eps$.

Let $b_\eps=v+w_\eps$. Expanding the equation for the given smooth solution and \eqref{eq:H} gives
\[
 \partial_tb_\eps+(b_\eps\cdot\nabla)b_\eps
 -\nu\Delta b_\eps+\nabla\pi=g+H_\eps.
\]
Adding the building block equation produces the equation for $u_\eps$ except for
\[
 (b_\eps\cdot\nabla)U_\eps+(U_\eps\cdot\nabla)b_\eps.
\]
Both terms are identically zero. On a neighborhood of $\supp U_\eps$ this follows from $b_\eps=0$, including its derivatives, by \eqref{eq:bgzero}. Outside that support $U_\eps$ and its derivatives vanish, including at the support boundary by smoothness. Before its starting time it vanishes everywhere. Thus \eqref{eq:NS} holds exactly for \eqref{eq:insertion} on $[0,T)$.

Each summand of the velocity is divergence free. The correction is zero for
$t\le T-2\eps^2$, and the building block is zero before $T-\eps^2$, so the initial datum and the stated earlier history are unchanged. On the active building block support, $u_\eps=U_\eps$. Hence
\[
 \norm{u_\eps(t)}_\infty\ge\norm{U_\eps(t)}_\infty
\]
and the maximum norm is unbounded as $t\uparrow T$.

The velocity is smooth on every closed interval before blowup. The force $F_\eps$ is globally smooth by its original definition, and $H_\eps$ is globally smooth by Lemma~\ref{lem:correction}. Their time supports remain compact subsets of $(0,\infty)$ when $\eps$ is sufficiently small. Therefore $g_\eps\in\FF$, independently of the absence of a classical extension of $u_\eps$ at $T$. Uniqueness in Proposition~\ref{prop:local} identifies the constructed velocity with the maximal solution for $(a,g_\eps)$ up to every $T'<T$. Its lifespan is at least $T$, and the unbounded maximum norm excludes extension beyond $T$. Thus the lifespan is exactly $T$.

Finally, the triangle inequality, Proposition~\ref{prop:scaling} and
Lemma~\ref{lem:correction} give \eqref{eq:Eclose}--\eqref{eq:Hsclose}. In
\eqref{eq:Hsclose}, the lower-order inhomogeneous terms are absorbed because
$0<\eps\le1$ and $s\ge0$. For negative $s$, use
$\norm{z}_{H^s}\le\norm{z}_2$ and the $s=0$ estimate.
\end{proof}

\subsection{Density below the critical order}\label{sec:density}
\begin{proposition}\label{prop:density}
For $a\in\XX$, $\nu>0$, $T>0$ and $s<1/2$,
\[
 \forall g\in\FF\ \forall\rho>0\ \exists f\in\FF:
 \quad \norm{f-g}_{L^1_tH^s_x}<\rho,\qquad
       T_{\max}^{\nu}(a,f)\le T.
\]
\end{proposition}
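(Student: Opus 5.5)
The plan is a two-case split on the maximal lifespan $T^\nu_{\max}(a,g)$ of the given force, followed by an application of Theorem~\ref{thm:insertion}. Fix $g\in\FF$ and $\rho>0$.

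\emph{Case 1: $T^\nu_{\max}(a,g)\le T$.} Then $g\in\BB_{\nu,a,T}$ already, and we take $f=g$. As the introduction remarks, a solution that breaks down earlier needs no modification.

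\emph{Case 2: $T^\nu_{\max}(a,g)>T$.} By Proposition~\ref{prop:local}, the maximal smooth solution $(v,\pi)$ with data $(a,g)$ is smooth on $[0,T+\delta]$ for some $\delta>0$. Any $\delta$ with $T+\delta<T^\nu_{\max}(a,g)$ works, since on a closed subinterval of the lifespan the solution is smooth in the periodic class. Thus $(v,\pi,g)$ satisfies the hypotheses of Theorem~\ref{thm:insertion} with $g\in\FF$ and $v(0)=a\in\XX$. Fix any coordinate ball as in Lemma~\ref{lem:localization}. For all small $\eps$, the theorem produces $g_\eps\in\FF$ with $T^\nu_{\max}(a,g_\eps)=T$, so $g_\eps\in\BB_{\nu,a,T}$. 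It remains to show $\norm{g_\eps-g}_{L^1_tH^s_x}\to0$.

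For $0\le s<1/2$, this follows from \eqref{eq:Hsclose}: both exponents $1/2-s$ and $3/2-s$ are positive. For $s<0$, it follows from the last assertion of Theorem~\ref{thm:insertion}, via $\norm{z}_{H^s}\le\norm{z}_2$ and the $s=0$ case of \eqref{eq:Hsclose}. Choosing $\eps$ small enough that the bound is below $\rho$ and setting $f=g_\eps$ completes the argument.

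The only point I expect to need care is the reduction in Case 2: the solution must be smooth \emph{through} $T$, on a closed interval strictly beyond $T$ in the class required by Theorem~\ref{thm:insertion}. This follows from the definition of maximal lifespan given by Proposition~\ref{prop:local}, since $T<T^\nu_{\max}(a,g)$ is a strict inequality. All the quantitative content is already contained in \eqref{eq:Hsclose}.
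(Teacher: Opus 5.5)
Your proposal is correct and matches the paper's proof essentially step for step. Both use the same two-case split on $T^\nu_{\max}(a,g)$. In the breakdown case both take $f=g$. Otherwise both choose $\delta$ with $T+\delta<T^\nu_{\max}(a,g)$ and apply Theorem~\ref{thm:insertion}, using \eqref{eq:Hsclose} for $0\le s<1/2$ and the bound $\norm{z}_{H^s}\le\norm{z}_2$ with the $s=0$ case for $s<0$.
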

\begin{proof}
Fix $g$ and $\rho$. There are two exhaustive cases.

If $T_{\max}^{\nu}(a,g)\le T$, choose $f=g$. It already belongs to
$\BB_{\nu,a,T}$ and the norm difference is zero.

If $T_{\max}^{\nu}(a,g)>T$, choose $\delta>0$ so that
$T+\delta<T_{\max}^{\nu}(a,g)$, and use its smooth solution as the given smooth solution in Theorem~\ref{thm:insertion}. For every sufficiently small $\eps$, that theorem supplies $g_\eps\in\BB_{\nu,a,T}$ with lifespan exactly $T$.
For $0\le s<1/2$, \eqref{eq:Hsclose} tends to zero. For $s<0$, its $s=0$ case and the embedding $L^2\hookrightarrow H^s$ do so. Choose $\eps$ so that the force difference is less than $\rho$ and set $f=g_\eps$.

Both choices preserve $a$, $\nu$, and $T$, and give the required approximation.
\end{proof}

\subsection{Critical regularity and completion of the proof}\label{sec:critical}
Non-density at and above the threshold is a consequence of classical
stability of the zero solution in the critical topology. For a direct
periodic robustness theorem with $L^2_tH^{-1/2}_x$ force perturbations, see
Mar\'in-Rubio, Robinson and Sadowski~\cite[Theorem~3, author manuscript
pp.~6--7]{mrrs}. The $L^1_tH^{1/2}_x$ small-force result needed here follows
from the forced Fujita--Kato theory in Danchin~\cite[Theorem~2.3.1,
pp.~47--49]{danchin}; we record only the adaptation of its hypotheses.

\begin{proposition}\label{prop:critical}
There is $c>0$, depending only on the unit torus and the stated norm conventions, such that
\begin{equation}\label{eq:smallcritical}
 g\in\FF,\qquad
 \rho:=\norm{g}_{L^1(0,\infty;H^{1/2})}<c\nu
\end{equation}
imply $T_{\max}^{\nu}(0,g)=\infty$.
\end{proposition}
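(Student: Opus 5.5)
The plan is a small-data global existence argument in the critical space $\dot H^{1/2}(\TT)$ with an $L^1_t$ force, followed by upgrading to classical continuation through the criterion \eqref{eq:criterion} of Proposition~\ref{prop:local}. First I reduce to mean zero. Since $a=0$, Tao's mean reduction in the proof of Proposition~\ref{prop:local} replaces $g$ by $g-\int_{\TT}g$ and translates space by $X(t)=\int_0^t m$, with $m(t)=\int_0^t\int_{\TT}g$. Translation preserves every Sobolev norm, and subtracting the mean does not increase $\|g\|_{H^{1/2}}$. So I may assume $u$ and the projected force $\PP g$ are mean zero; then $\|\PP g\|_{\dot H^{1/2}}\le C\|g\|_{H^{1/2}}$. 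By the viscosity reduction $\tau=\nu t$, $\widetilde u=\nu^{-1}u$, $\widetilde g=\nu^{-2}g$, the $L^1_tH^{1/2}_x$ norm of the force becomes $\nu^{-1}\rho$. This explains the threshold $c\nu$, and it lets me work at $\nu=1$ with $\rho<c$.

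The core step is an a priori estimate for the smooth maximal solution on $[0,T^*)$. I take the $\dot H^{1/2}$ inner product of \eqref{eq:projected} with $u$, i.e.\ test with $\Lambda u$:
\[
 \tfrac12\tfrac{d}{dt}\|u\|_{\dot H^{1/2}}^2+\|u\|_{\dot H^{3/2}}^2
 \le |\langle \PP\nabla\cdot(u\otimes u),\Lambda u\rangle|+\|g\|_{\dot H^{1/2}}\|u\|_{\dot H^{1/2}}.
\]
I bound the trilinear term by $C\|u\|_{\dot H^{1/2}}\|u\|_{\dot H^{3/2}}^2$. To do this, write it as $\langle (u\cdot\nabla)u,\Lambda u\rangle$ and use H\"older with $L^3\cdot L^3\cdot L^3$, where $\|u\|_3\le C\|u\|_{\dot H^{1/2}}$ and $\|\nabla u\|_3+\|\Lambda u\|_3\le C\|u\|_{\dot H^{3/2}}$ by \eqref{eq:embeddings} together with the periodic spectral gap. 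Set $y(t)=\|u(t)\|_{\dot H^{1/2}}$. As long as $Cy\le 1/2$, regularizing $y$ at zero gives $y'\le\|g\|_{\dot H^{1/2}}$ together with dissipation. A continuity (bootstrap) argument then yields $y(t)\le C\rho<1/(2C)$ for all $t<T^*$, provided $c$ is small. Here $y(0)=0$, and $y$ is continuous because the solution is smooth. The same inequality gives $\int_0^{T^*}\|u\|_{\dot H^{3/2}}^2\le C\rho^2$.

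It remains to show that $T^*<\infty$ is impossible. By Proposition~\ref{prop:local} it suffices to bound $\int_0^{T^*}\|u\|_{H^2}^2$, and I expect this to be the main obstacle, since the critical bound alone is one half-derivative short. I would run one more energy level. Test with $-\Delta u$ to get
\[
 \tfrac{d}{dt}\|\nabla u\|_2^2+\|\Delta u\|_2^2\le C\|u\|_3\|\nabla u\|_6\|\Delta u\|_2+2\|g\|_2\|\Delta u\|_2 .
\]
Because $\|\nabla u\|_6\le C\|\Delta u\|_2$ and $\|u\|_3$ is small, the nonlinear term is absorbed. This leaves $\frac{d}{dt}\|\nabla u\|_2^2+\frac12\|\Delta u\|_2^2\le C\|g\|_2^2$. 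The force is smooth with compact time support, so the right side is integrable on $[0,T^*]$. Hence $\int_0^{T^*}\|u\|_{H^2}^2<\infty$, using mean zero so that $\|\Delta u\|_2$ controls $H^2$. Proposition~\ref{prop:local} then extends the solution past $T^*$, a contradiction. Undoing the mean translation gives $T^\nu_{\max}(0,g)=\infty$. As a cross-check, the smallness statement is exactly the forced Fujita--Kato theorem of Danchin cited above, and the argument only needs its hypotheses adapted to $\FF$ and the Fourier conventions of Section~\ref{sec:preliminaries}.
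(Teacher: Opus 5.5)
Your proposal is correct, but it follows a different route from the paper.

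\textbf{What the paper does.} It performs the same mean reduction. It then cites Danchin's forced Fujita--Kato fixed-point theorem (Theorem~2.3.1) with $p=r=2$, using $\dot B^{1/2}_{2,2}=\dot H^{1/2}$ and $\|h\|_{\widetilde L^1\dot H^{1/2}}\le\|h\|_{L^1\dot H^{1/2}}$, and asserts that the proof transfers to mean-zero periodic fields. This produces a global critical solution. The paper then invokes persistence of regularity from the local theory to identify that solution with the classical maximal one.

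\textbf{What you do.} You never construct a critical solution. You prove an a priori $\dot H^{1/2}$ bound on the classical solution itself, using the $L^3\cdot L^3\cdot L^3$ trilinear estimate and a continuity argument from $y(0)=0$. You then close with an $H^1$-level energy estimate, in which smallness of $\|u\|_3$ absorbs the nonlinearity. That gives $\int_0^{T^*}\|u\|_{H^2}^2<\infty$, so \eqref{eq:criterion} and Proposition~\ref{prop:local} contradict $T^*<\infty$.

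\textbf{What your route buys.} It is self-contained and uses only the embeddings \eqref{eq:embeddings} and the continuation criterion already set up in the paper. It avoids the unverified transfer of a whole-space Chemin--Lerner argument to the torus. It avoids identifying a rough critical solution with the classical one. Your second energy estimate makes the ``persistence of regularity'' step explicit. Two features do the work:
\begin{itemize}
\item the Hilbert structure at $p=2$, which lets you test with $\Lambda u$;
\item the purely qualitative fact $g\in L^2_tL^2_x$, which holds for every $g\in\FF$ but is not controlled by $\rho$.
\end{itemize}
The constant $c$ just has to be small enough for both absorptions.

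\textbf{What the paper's route buys.} It is shorter on the page and sits inside the general $\dot B^{-1+3/p}_{p,r}$ framework. It actually constructs the solution rather than only bounding a smooth one, which would matter for rougher forces.
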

\begin{proof}
First remove the mean as in Tao~\cite[Section~3, (36)--(38)]{tao2013}.
Set $m(t)=\int_0^t\overline g(\tau)\dd\tau$ and
$b(t)=\int_0^tm(\tau)\dd\tau$. In the translated coordinates,
$v(t,x)=u(t,x+b(t))-m(t)$ solves the mean-zero periodic equation with force
$h(t,x)=g(t,x+b(t))-\overline g(t)$. Translation is an isometry and removal
of the zero mode decreases the norm, so
$\|h\|_{L^1_tH^{1/2}_x}\le\rho$ and $|m(t)|\le\rho$.

Apply the critical small-data fixed-point theorem
\cite[Theorem~2.3.1 and its proof]{danchin} with $p=r=2$.
Here $\dot B^{1/2}_{2,2}=\dot H^{1/2}$, and Minkowski's inequality gives
$\|h\|_{\widetilde L^1\dot H^{1/2}}\le\|h\|_{L^1\dot H^{1/2}}$.
The same proof applies to mean-zero periodic fields using the periodic heat
estimates in \cite[Section~2.2.4, p.~44]{danchin}; the zero mode has already
been removed. It gives a global critical solution when $\rho<c\nu$.
Smooth data retain their higher regularity by the local theory cited in
Proposition~\ref{prop:local}. Restoring the translation and bounded mean
therefore gives the required global classical solution.
\end{proof}

\begin{corollary}\label{cor:nondensity}
For every $s\ge1/2$ and $T>0$, $\BB^0_{\nu,T}$ is not dense in
$\FF$ with the relative $L^1_tH^s_x$ topology.
\end{corollary}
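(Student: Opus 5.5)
The plan is to exhibit one force $g_0\in\FF$ together with an open $L^1_tH^s_x$-ball around it that contains no element of $\BB^0_{\nu,T}$. The natural center is $g_0=0$, whose solution with zero initial velocity is $u\equiv0$, global and smooth; in particular $0\notin\BB^0_{\nu,T}$.

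First take $s\ge1/2$ and $f\in\FF$ with $\norm{f}_{L^1(0,\infty;H^s)}<c\nu$, where $c$ is the constant of Proposition~\ref{prop:critical}. On the torus the weights satisfy $(1+4\pi^2|k|^2)^{1/2}\le(1+4\pi^2|k|^2)^{s}$ for $s\ge1/2$, so $\norm{z}_{H^{1/2}}\le\norm{z}_{H^s}$ at each time. Integrating in time then gives
\[
 \norm{f}_{L^1(0,\infty;H^{1/2})}\le\norm{f}_{L^1(0,\infty;H^s)}<c\nu .
\]
Proposition~\ref{prop:critical} therefore yields $T^\nu_{\max}(0,f)=\infty>T$, so $f\notin\BB^0_{\nu,T}$.

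It follows that the relative open ball of radius $c\nu$ about $0$ in $\FF$ is nonempty, since it contains $0$, and is disjoint from $\BB^0_{\nu,T}$. Hence $\BB^0_{\nu,T}$ is not dense for any $T>0$, and the argument is independent of $T$.

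The only step to check is the monotonicity of the inhomogeneous norms in $s$. This holds by the Fourier definition fixed in Section~\ref{sec:preliminaries}, because the weight $1+4\pi^2|k|^2$ is at least one. No real obstacle remains, since all analytic difficulty sits in Proposition~\ref{prop:critical}. Combining this corollary with Proposition~\ref{prop:density} then gives the equivalence in Theorem~\ref{thm:main}(ii).
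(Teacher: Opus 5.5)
Your proposal is correct and matches the paper's proof. The $L^1_tH^s_x$ ball of radius $c\nu$ about the zero force is nonempty, and it misses $\BB^0_{\nu,T}$ because $\norm{g(t)}_{H^{1/2}}\le\norm{g(t)}_{H^s}$ for $s\ge1/2$ lets you apply Proposition~\ref{prop:critical}; the paper argues the same way, first at $s=1/2$ and then for larger $s$ through the Fourier weights.
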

\begin{proof}
At $s=1/2$, the set
\[
 \{g\in\FF:\norm{g}_{L^1_tH^{1/2}_x}<c\nu\}
\]
is a nonempty relative open ball, containing zero, and is disjoint from
$\BB^0_{\nu,T}$ by Proposition~\ref{prop:critical}.
For $s\ge1/2$, the Fourier weights imply
$\norm{g(t)}_{H^{1/2}}\le\norm{g(t)}_{H^s}$.
The ball $\norm{g}_{L^1_tH^s_x}<c\nu$ is therefore also disjoint from the set of forces producing breakdown. A dense subset cannot miss a nonempty open set.
\end{proof}

\begin{proof}[Proof of Theorem~\ref{thm:main}]
Proposition~\ref{prop:density} gives density for every fixed smooth initial velocity when $s<1/2$. Corollary~\ref{cor:nondensity} gives non-density for zero initial velocity when $s\ge1/2$. These are the two assertions of the theorem.
\end{proof}
The converse has been proved for zero initial velocity. The behavior for general nonzero initial velocities at or above the critical order remains outside this classification.

\subsection{Other force norms, trajectory approximation, and data pairs}
\begin{corollary}\label{cor:mixed}
For every fixed $a\in\XX$, the set $\BB_{\nu,a,T}$ is dense in $\FF$ with its relative $L^q(0,\infty;L^p)$ topology whenever
\begin{equation}\label{eq:mixedregion}
 1\le p,q\le\infty,\qquad \frac3p+\frac2q>3.
\end{equation}
\end{corollary}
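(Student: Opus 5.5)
The argument repeats Proposition~\ref{prop:density}, with the $L^1_tH^s_x$ estimate replaced by the mixed-norm bound \eqref{eq:Fclose} of Theorem~\ref{thm:insertion}. Fix $a\in\XX$, $g\in\FF$ and $\rho>0$, and split into the same two exhaustive cases.

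If $T^\nu_{\max}(a,g)\le T$, we take $f=g$. Then $f$ lies in $\BB_{\nu,a,T}$ and the distance is zero.

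Otherwise choose $\delta>0$ with $T+\delta<T^\nu_{\max}(a,g)$. The maximal smooth solution $(v,\pi,g)$ from Proposition~\ref{prop:local} is then smooth on $[0,T+\delta]$ with $v(0)=a$. Theorem~\ref{thm:insertion} therefore applies and gives, for all small $\eps$, forces $g_\eps\in\FF$ with $T^\nu_{\max}(a,g_\eps)=T$. Thus $g_\eps\in\BB_{\nu,a,T}$, and
\[
 \norm{g_\eps-g}_{L^q_tL^p_x}\le C_{p,q}\bigl(\eps^{\alpha(p,q)}+\eps^{\alpha(p,q)+1}\bigr),
 \qquad \alpha(p,q)=-3+\frac3p+\frac2q .
\]
Condition \eqref{eq:mixedregion} is exactly $\alpha(p,q)>0$. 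Hence both exponents are positive and the right side tends to zero as $\eps\to0$. Choosing $\eps$ so that it is below $\rho$ and setting $f=g_\eps$ finishes the argument.

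The one point to check is that \eqref{eq:Fclose} holds across the whole closed range $1\le p,q\le\infty$, including the endpoint exponents. For $F_\eps$ this is \eqref{eq:packetFscale}. Proposition~\ref{prop:scaling} states it for all $1\le p,q\le\infty$, and the essential-supremum cases follow from the same change of variables with $1/p=0$ or $1/q=0$. For $H_\eps$ it is \eqref{eq:Hmixed}. That bound comes from the representation $H_\eps=\eps^{-2}Q_\eps(z,\sigma)$, with $Q_\eps$ uniformly bounded and supported in a fixed cylinder of spatial volume $O(\eps^3)$ and time length $O(\eps^2)$ in the original variables; this again covers infinite exponents.

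The region is nonempty only through combinations such as $p=1$, $q<\infty$ with $2/q>0$, or $p<1$ being excluded. For instance $p=1$ requires $q<\infty$, and $q=1$ requires $p<3$. No further case analysis is needed, since positivity of $\alpha$ is all that is used. I expect no real obstacle; the only care is confirming that the constants $C_{p,q}$ are finite for the fixed building block force $F$. This holds because $F\in C_c^\infty$ has finite $L^q_tL^p_x$ norm for every pair of exponents.
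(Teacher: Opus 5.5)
Your proposal is correct and follows the paper's proof: you use the same two-case split as in Proposition~\ref{prop:density}, and in the smooth-through-$T$ case \eqref{eq:Fclose} tends to zero because $\alpha(p,q)>0$. Your endpoint check is harmless but unnecessary, since $3/p+2/q>3$ with $p,q\ge1$ already forces $p,q<\infty$; your remark about which combinations make the region nonempty is garbled and can simply be dropped.
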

\begin{proof}
Use the same two cases as in Proposition~\ref{prop:density}. In the case of a solution smooth through the prescribed time, \eqref{eq:Fclose} tends to zero because
$\alpha(p,q)>0$ and hence $\alpha(p,q)+1>0$.
\end{proof}
The sufficient region includes $L^1_tL^2_x$ and $L^2_tL^{4/3}_x$. The other mixed Lebesgue topologies require further analysis.

\begin{corollary}\label{cor:closure}
Fix $a\in\XX$. Let $\mathcal R_{a,T}$ be the trajectories with initial velocity $a$ and forces in $\FF$ that are smooth through $T$.
Let $\mathcal S_{a,T}$ be the trajectories with the same initial velocity and forces in $\FF$ that are smooth on $[0,T)$, have finite $E_T$ norm, and have unbounded maximum velocity at $T$. Then
\begin{equation}\label{eq:closure}
 \mathcal R_{a,T}\subseteq\overline{\mathcal S_{a,T}}^{\,E_T}.
\end{equation}
For each given pair $(v,g)$ and every fixed $s<1/2$, the approximating pairs can be chosen with
\[
 (u_\eps,g_\eps)\longrightarrow(v,g)
 \quad\hbox{in }E_T\times L^1(0,\infty;H^s).
\]
\end{corollary}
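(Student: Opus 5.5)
The plan is to apply Theorem~\ref{thm:insertion} to each given pair and read off both convergences from its estimates. Fix $(v,g)$ with trajectory $v\in\mathcal R_{a,T}$. By definition the solution extends smoothly to $[0,T+\delta]$ for some $\delta>0$, and $g\in\FF$, $v(0)=a\in\XX$. So all hypotheses of Theorem~\ref{thm:insertion} hold for any fixed coordinate ball $B$ as in Lemma~\ref{lem:localization}.

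For all sufficiently small $\eps$ that theorem gives $g_\eps\in\FF$ and $u_\eps$ with initial velocity $a$. The velocity $u_\eps$ is smooth on $[0,T)$, and $\limsup_{t\uparrow T}\norm{u_\eps(t)}_\infty=\infty$. To place $u_\eps$ in $\mathcal S_{a,T}$ we still need $\norm{u_\eps}_{E_T}<\infty$. This follows from the triangle inequality together with \eqref{eq:Eclose}:
\[
\norm{u_\eps}_{E_T}\le\norm{v}_{E_T}+(M+D)\eps^{1/2}+C\eps^{3/2}.
\]
Here $\norm{v}_{E_T}$ is finite because $v$ is smooth on the compact set $\TT\times[0,T]$. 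Hence $u_\eps\in\mathcal S_{a,T}$.

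The same bound \eqref{eq:Eclose} gives $u_\eps\to v$ in $E_T$ as $\eps\to0$, and the inclusion \eqref{eq:closure} follows. For the force, take a fixed $s<1/2$.
\begin{itemize}
\item If $0\le s<1/2$, \eqref{eq:Hsclose} gives $\norm{g_\eps-g}_{L^1_tH^s_x}\le C_s(\eps^{1/2-s}+\eps^{3/2-s})\to0$.
\item If $s<0$, the final sentence of Theorem~\ref{thm:insertion} applies. It rests on $\norm{z}_{H^s}\le\norm{z}_2$ combined with the $s=0$ case.
\end{itemize}
Since both convergences come from the same family $(u_\eps,g_\eps)$, they hold simultaneously in $E_T\times L^1(0,\infty;H^s)$.

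The only point needing care is that membership in $\mathcal S_{a,T}$ is stated for trajectories, meaning the maximal solution for $(a,g_\eps)$. We must check that this maximal solution is exactly the constructed $u_\eps$ on $[0,T)$. This is the uniqueness statement already proved in Theorem~\ref{thm:insertion}(i), which also gives lifespan exactly $T$. The argument is therefore essentially bookkeeping, and I expect no real obstacle. The one thing to make explicit is that the $E_T$ norm is taken on the open interval $(0,T)$, so $\norm{u_\eps}_{E_T}$ is finite even though $u_\eps$ blows up at $T$.
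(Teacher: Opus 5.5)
Your proposal is correct and follows essentially the same route as the paper. Both apply Theorem~\ref{thm:insertion}, use \eqref{eq:Eclose} with the finiteness of $\norm{v}_{E_T}$ for membership in $\mathcal S_{a,T}$ and for $E_T$ convergence, and use \eqref{eq:Hsclose} (plus $\norm{z}_{H^s}\le\norm{z}_2$ when $s<0$) for the simultaneous force convergence; the paper's extra observation that $E_T$ is equivalent to the sum norm on $L^\infty_tL^2_x\cap L^2_tH^1_x$ is an interpretive remark your argument does not need.
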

\begin{proof}
A given solution smooth through $T$ is smooth through $T+\delta$ for some
$\delta>0$, by local continuation; equivalently one may take this as the meaning of smoothness through the endpoint. Apply Theorem~\ref{thm:insertion}. Its energy estimate gives convergence in $E_T$, and its force estimates give simultaneous convergence. The given smooth solution has finite $E_T$ norm and the building block and correction do also, so $u_\eps$ belongs to the specified ambient energy space.

To verify the claimed interpretation of the norm, note that
\[
 \norm{z}_{L^2(0,T;L^2)}
 \le T^{1/2}\norm{z}_{L^\infty(0,T;L^2)}.
\]
Thus $E_T$ controls both terms of $L^2_tH^1_x$, while the usual sum norm controls $E_T$ directly. This proves the equivalence used in \eqref{eq:closure}. No endpoint value or classical continuation of $u_\eps$ is implied by convergence in this norm.
\end{proof}

\begin{proposition}\label{prop:projection}
Define
\[
 \mathfrak B_{\nu,T}
 =\{(a,f)\in\XX\times\FF:T_{\max}^{\nu}(a,f)\le T\}.
\]
If $s<1/2$, this set is dense in the product of any topology on $\XX$ and the relative $L^1_tH^s_x$ topology on $\FF$. Its projection onto $\XX$ is all of $\XX$.
The family obtained while requiring $a=0$ projects instead to the singleton $\{0\}$.
\end{proposition}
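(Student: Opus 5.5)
The plan has three parts, one for each assertion of Proposition~\ref{prop:projection}.

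\emph{Density in the product topology.} Fix $(a,g)\in\XX\times\FF$ and a basic product neighborhood $\mathcal U\times\{f:\norm{f-g}_{L^1_tH^s_x}<\rho\}$ of it, where $\mathcal U$ is open in the chosen topology on $\XX$ and contains $a$. The point is to keep the first coordinate equal to $a$. That coordinate then lies in $\mathcal U$ whatever topology $\XX$ carries, so we never need to approximate initial data. For the second coordinate, Proposition~\ref{prop:density}, which holds for every $s<1/2$ including negative $s$, supplies $f\in\FF$ with $\norm{f-g}_{L^1_tH^s_x}<\rho$ and $T_{\max}^\nu(a,f)\le T$. Hence $(a,f)\in\mathfrak B_{\nu,T}$ lies in the neighborhood, and density follows.

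\emph{Surjectivity of the projection onto $\XX$.} Given $a\in\XX$, I need a single force $f\in\FF$ with $T^\nu_{\max}(a,f)\le T$. Apply the density argument with $g=0$. If $T_{\max}^\nu(a,0)\le T$, take $f=0$. Otherwise the unforced solution is smooth through $T+\delta$ for some $\delta>0$, and Theorem~\ref{thm:insertion} with $g=0$ gives $g_\eps=H_\eps+F_\eps\in\FF$ with lifespan exactly $T$. Either way $(a,f)\in\mathfrak B_{\nu,T}$, so every $a$ lies in the projection.

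\emph{The family with $a=0$.} This family is $\{0\}\times\BB^0_{\nu,T}$, so its projection is contained in $\{0\}$. Nonemptiness of $\BB^0_{\nu,T}$ follows from the previous step with $a=0$, and therefore the projection is exactly $\{0\}$.

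The main obstacle is only a matter of reading the statement: "any topology on $\XX$" has to be compatible with the argument. Since the first coordinate is never perturbed, the product density reduces entirely to the fiberwise density of Proposition~\ref{prop:density}, and this works uniformly in the chosen topology on $\XX$.
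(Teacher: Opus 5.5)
Your proposal is correct and follows essentially the same argument as the paper. Keeping the first coordinate equal to $a$ reduces product density to the fiberwise density of Proposition~\ref{prop:density}. Surjectivity and the $a=0$ assertion then follow from nonemptiness of each fiber; the paper deduces this from density, and you make it explicit by taking $g=0$.
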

\begin{proof}
For every fixed $a$, Proposition~\ref{prop:density} makes the force fiber
dense and nonempty. Every product neighborhood of $(a,g)$ therefore meets
$\mathfrak B_{\nu,T}$ with its first coordinate still equal to $a$.
This proves both density and surjectivity of the projection. Restricting
to $a=0$ gives the last assertion.
\end{proof}

The projection statement has the quantifier order
$\forall a\,\exists f$, not $\exists f\,\forall a$.
For a fixed prescribed force $f_*$, the set
\[
 \{a\in\XX:T_{\max}^{\nu}(a,f_*)\le T\}
\]
is a different object and is not classified by these force-changing constructions.
There is also a distinction between the two terminal-time assertions.
Theorem~\ref{thm:insertion} gives singularity exactly at $T$ around every given solution smooth through $T$. Proposition~\ref{prop:density} uses breakdown by $T$, because a given solution may already break down earlier and is then left unchanged. It does not prove that forces producing velocity blowup exactly at $T$
are dense near a force whose solution breaks down earlier.
\begin{remark}\label{rem:peaks}
The force convergence in Theorem~\ref{thm:insertion} is compatible with diverging pointwise amplitudes. Since the original force is nonzero,
\[
 \norm{g_\eps-g}_{L^\infty_{t,x}}
 \ge\eps^{-3}\norm{F}_\infty-C\eps^{-2}\longrightarrow\infty.
\]
Here the first term is the amplitude of $F_\eps$ and the second bounds $H_\eps$. The force $F$ is nonzero by~\eqref{eq:packetenergy}, since $U$ blows up. Hence the approximating smooth forces have unbounded amplitudes and cannot obey uniform bounds on all derivatives.
\end{remark}

\subsection{Gluing in a bounded domain}
For the following bounded-domain statement, $H^s(\Omega)$ is the space
of restrictions of $H^s(\R^3)$ distributions, with quotient norm
\begin{equation}\label{eq:restriction-norm}
 \norm{z}_{H^s(\Omega)}=\inf_{Z|_\Omega=z}\norm{Z}_{H^s(\R^3)}.
\end{equation}
This convention applies also to negative orders. At order zero it is the
usual $L^2(\Omega)$ norm.
For every fixed compact $K\Subset\Omega$ and $s\in\R$, smooth fields supported
in $K$ obey
\begin{equation}\label{eq:zero-extension}
 \norm{z}_{H^s(\Omega)}\le\norm{E_0z}_{H^s(\R^3)}
 \le C_{s,K,\Omega}\norm{z}_{H^s(\Omega)},
 \qquad \operatorname{supp}z\subset K,
\end{equation}
Here $E_0z$ denotes extension by zero. Choose $\chi\in C_c^\infty(\Omega)$ equal to one near $K$ by the
smooth Urysohn lemma~\cite{petersen}. Multiplication by a fixed smooth cutoff
is bounded on $H^s(\R^3)$ for every real $s$
\cite[Chapter~4, Section~2, (2.19)]{taylor2011}. For every extension $v$ of $z$,
$\chi v=E_0z$; taking the infimum over extensions proves the second
inequality in \eqref{eq:zero-extension}. The first follows from the quotient norm.
The constant depends on the fixed cutoff and $s$, not on a smaller support
or a shrinking scale $\eps$. Applying the pointwise bounds in time gives the
same comparison for all the mixed norms in \eqref{eq:time-norms}, provided
$K$ is fixed for all times. No bounded zero-extension operator on arbitrary
$H^s(\Omega)$ is asserted. All vector and tensor norms in these definitions
are the square root of the sum of the squared component norms.

The next corollary extends the local gluing construction to a bounded
container. Placing the modification strictly inside an interior ball leaves
the velocity unchanged near the boundary, so the no-slip condition is
preserved. The starting point is a given compatible smooth solution; the
construction changes its interior evolution while retaining its boundary values.

\begin{corollary}\label{cor:boundary}
Let $\Omega\subset\R^3$ be a bounded box or a bounded smooth domain.
Suppose that, for some $\delta>0$, $(v,\pi,g)$ is a smooth no-slip
solution on $[0,T+\delta]$, and that $g$ is smooth on
$\overline\Omega\times[0,\infty)$ with temporal support compact in
$(0,\infty)$.
Here smoothness on a closed spacetime slab means restriction of a
$C^\infty$ field from an open neighborhood of that slab; this convention
also specifies smoothness at the edges and corners of a box. We assume
$v,\pi$ have this regularity on
$\overline\Omega\times[0,T+\delta]$, and $g$ on each finite closed slab.
The equation and incompressibility hold in $\Omega$, and
$v|_{\partial\Omega}=0$. Pressure may be normalized by zero spatial mean.
Existence of this compatible smooth solution is an assumption of the corollary.
The construction in Theorem~\ref{thm:insertion} applies inside any prescribed interior ball, preserving the initial velocity and no-slip boundary values.
The energy estimates use $L^2(\Omega)$ and the force estimates use
$L^1(0,\infty;H^s(\Omega))$, with the restriction norm
\eqref{eq:restriction-norm}. For every real $s$, the domain and zero-extended
force-difference norms are comparable by \eqref{eq:zero-extension}, with a
constant independent of $\eps$; the convergence assertion remains $s<1/2$.
\end{corollary}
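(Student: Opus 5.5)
The plan is to rerun the construction of Theorem~\ref{thm:insertion} verbatim inside an interior ball. First, fix a coordinate ball $B$ with $\overline B\subset\Omega$ and a slightly larger compact $K$ with $\overline B\subset K\Subset\Omega$. Choose $x_0\in B$, and take $\eps$ small enough that $x_0+\eps K_*\subset B$, $x_0+\eps B(0,R)\subset B$, and $2\eps^2<\min(T,\delta)$. Here $R$ and the cutoffs $\theta,\eta$ are as in Section~\ref{sec:insertion}.

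Second, since $v$ is smooth and divergence free on a neighborhood of $\overline B$, Lemma~\ref{lem:potential} applies there. The radial potential \eqref{eq:potential} is built only from values of $v$ in the ball. It gives $w_\eps$, supported in $x_0+\eps B(0,R)\subset B$, with \eqref{eq:bgzero}. The rescaled fields \eqref{eq:scaling} need no periodization; they are the whole-space fields supported in $B$. Set $u_\eps=v+w_\eps+U_\eps$, $p_\eps=\pi+P_\eps$ (subtracting the spatial mean over $\Omega$), and $g_\eps=g+H_\eps+F_\eps$.

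Third, verify the equation pointwise in $\Omega$ exactly as in the proof of Theorem~\ref{thm:insertion}. The mixed nonlinear terms vanish by \eqref{eq:bgzero}, and all the algebra is local. Because $w_\eps,U_\eps,P_\eps,H_\eps,F_\eps$ vanish outside $K$, the velocity $u_\eps$ equals $v$ on a neighborhood of $\partial\Omega$. Hence no-slip and smoothness up to the boundary (in the slab sense stated) are inherited from $v$ and $g$. The initial velocity is unchanged because both modifications vanish for $t\le T-2\eps^2$. The force $g_\eps$ is smooth on $\overline\Omega\times[0,\infty)$, with temporal support still compact in $(0,\infty)$. The supremum norm of $u_\eps$ blows up at $T$, as before. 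No local well-posedness theorem on $\Omega$ is needed, since the corollary only asserts the construction and its estimates. If one wants the lifespan statement, it holds in the sense that the constructed solution is classical on $[0,T)$ and unbounded at $T$.

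Fourth, the estimates. The energy bound \eqref{eq:Eclose} holds with $L^2(\Omega)$ norms, because the integrands are supported in $B$ and the scaling identities of Proposition~\ref{prop:scaling} and \eqref{eq:wE} are whole-space computations. For forces, $g_\eps-g=H_\eps+F_\eps$ is supported in the fixed compact $K$ for all times. So \eqref{eq:zero-extension}, applied at each time and then integrated, compares $L^1(0,\infty;H^s(\Omega))$ with $L^1(0,\infty;H^s(\R^3))$ of the zero extension. The constant $C_{s,K,\Omega}$ is independent of $\eps$.

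The main obstacle is the whole-space $H^s(\R^3)$ bound for the zero extension. Here I would replace Lemma~\ref{lem:localization} by the direct bound $\norm{z}_{H^s(\R^3)}\le C_s(\norm{z}_2+\norm{z}_{\dot H^s})$ for $0\le s\le1$. Combined with the Fourier scaling already computed for $F_\eps$ and $Q_\eps$, this gives $C_s(\eps^{1/2-s}+\eps^{3/2-s})$ for $0\le s<1/2$. For $s<0$, I would use $\norm{z}_{H^s}\le\norm{z}_2$ and the $s=0$ case. The convergence assertion therefore holds for $s<1/2$, while for $s\ge1/2$ the comparison of norms holds but the bounds no longer tend to zero.
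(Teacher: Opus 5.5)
Your construction, the boundary-collar argument and the force estimates all coincide with the paper's proof. The force estimates are the Euclidean scaling of the zero extensions for $0\le s<1/2$, the $L^2$ bound for $s<0$, and the $\eps$-independent comparison \eqref{eq:zero-extension} on one fixed compact set.

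The gap is the lifespan. You declare that no well-posedness on $\Omega$ is needed and settle for ``the constructed solution is classical on $[0,T)$ and unbounded at $T$.'' But the corollary asserts that the construction of Theorem~\ref{thm:insertion} applies. The point of that theorem is $T^\nu_{\max}(a,g_\eps)=T$, which is a property of the data $(a,g_\eps)$, not of one chosen solution. On $\Omega$ there is no counterpart of Proposition~\ref{prop:local}. As written, nothing rules out a different smooth no-slip solution with the same initial velocity and force that stays smooth through $T$. In that case $g_\eps$ would not produce breakdown at all.

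What is missing is uniqueness among smooth no-slip solutions, not existence; the paper supplies it by the difference-energy estimate and Gronwall. Let $u_1,u_2$ be smooth on $\overline\Omega\times[0,T']$, $T'<T$, with the same data, and set $w=u_1-u_2$. Then
\[
 \frac12\frac{\mathrm d}{\mathrm dt}\norm{w}_2^2+\nu\norm{\nabla w}_2^2
 =-\int_\Omega (w\cdot\nabla)u_2\cdot w\dd x
 \le\norm{\nabla u_2}_{L^\infty(\Omega\times[0,T'])}\norm{w}_2^2 .
\]
The transport, pressure and boundary terms vanish by incompressibility and $u_1=u_2=0$ on $\partial\Omega$. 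Gronwall then gives $w=0$. Any solution smooth on a closed slab through $T$ would therefore agree with $u_\eps$ on $[0,T)$ and be bounded there. That contradicts $\limsup_{t\uparrow T}\norm{u_\eps(t)}_\infty=\infty$. With this step added, your argument proves the corollary in full.
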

\begin{proof}
The vector potential and all cutoff estimates are local. Choose the correction and building block supports in a smaller closed ball strictly inside $\Omega$.
The computation of the momentum equation in Theorem~\ref{thm:insertion} is unchanged. The new velocity equals the given smooth solution in a fixed boundary collar for all times before blowup, so the no-slip boundary values are preserved. The force correction is supported in the same interior region and has the same globally smooth time extension.

For $0\le s<1/2$, the Euclidean scaling proof directly bounds the zero
extensions of the force differences in $L^1(0,\infty;H^s(\R^3))$.
For $s<0$, use $\norm{E_0(g_\eps-g)}_{H^s(\R^3)}
\le\norm{E_0(g_\eps-g)}_{L^2(\R^3)}$ at each time.
All these force-difference supports, including those after $T$, lie in one
fixed compact interior ball $K$ for every sufficiently small $\eps$.
Consequently \eqref{eq:zero-extension} gives the asserted domain estimates
and the scale-independent comparison after time integration. The energy estimates follow by integrating over that ball. Finally, the classical difference-energy estimate and Gr\"onwall give uniqueness on the no-slip domain; the boundary terms vanish. Thus the constructed solution is singular exactly at $T$.
\end{proof}

\subsection{Further constructions}\label{sec:variants}
\begin{proposition}\label{prop:affine}
Fix the building block $(U,P,F)$ of Theorem~\ref{thm:packet}, and fix a spacetime cylinder $Q=B_0\times(\tau_0,\tau_1)$ with
$0<\tau_0<\tau_1<1$ and $B_0\Subset\R^3$ an open ball.
For any $b\in C_c^\infty(Q;\R^3)$ with $\nabla\cdot b=0$, define
\begin{align}
 \widetilde U&=U+b,\qquad \widetilde P=P,\nonumber\\
 \widetilde F&=F+\partial_tb-\nu\Delta b
       +(U\cdot\nabla)b+(b\cdot\nabla)U+(b\cdot\nabla)b.\label{eq:affine}
\end{align}
These fields give an infinite-dimensional affine family of distinct velocities with zero initial data, finite energy and dissipation, and the same late singular behavior as $U$. For each fixed nonzero $b$, the family obtained by replacing $b$ with $\lambda b$ is non-isolated as $\lambda\to0$ in every fixed smooth seminorm of the velocity and force differences on their compact support.
\end{proposition}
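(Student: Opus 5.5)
The plan is to verify directly that $(\widetilde U,\widetilde P,\widetilde F)$ solves \eqref{eq:packet}, and then read off each asserted property from the support of $b$ inside $Q$.

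\textbf{The equation.} Expand $(\widetilde U\cdot\nabla)\widetilde U$ as
\[
(U\cdot\nabla)U+(U\cdot\nabla)b+(b\cdot\nabla)U+(b\cdot\nabla)b .
\]
Subtract the building-block equation for $(U,P,F)$. What remains is exactly $\partial_tb-\nu\Delta b$ plus the three interaction terms, which is $\widetilde F-F$ by \eqref{eq:affine}. Hence \eqref{eq:packet} holds with pressure $\widetilde P=P$. Incompressibility follows from $\nabla\cdot b=0$.

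\textbf{Regularity, data and singular behavior.} Since $b$ is compactly supported in $Q$ and $\tau_1<1$, the added terms in $\widetilde F$ are smooth on $\R^3\times(0,\infty)$ and supported in $\overline Q$. So $\widetilde F\in C_c^\infty(\R^3\times(0,\infty))$, and its time support is still compact in $(0,\infty)$ because $\tau_0>0$. The initial datum is unchanged, since $b=0$ near $t=0$. Spatial supports stay in the compact set $K\cup\overline{B_0}$. For $t\ge\tau_1$ we have $\widetilde U=U$, so the $L^\infty$ blowup in \eqref{eq:packetblowup} and the late behaviour as $t\uparrow1$ are identical. Energy and dissipation remain finite: $b$ is bounded in $L^\infty_tL^2_x\cap L^2_tH^1_x$, and Lemma~\ref{lem:packetenergy} supplies the bounds for $U$.

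\textbf{Affine family and distinctness.} The map $b\mapsto\widetilde U$ is affine and injective, since $\widetilde U-U=b$. The space of divergence-free $b\in C_c^\infty(Q)$ is infinite dimensional; for example, take curls $\nabla\times\phi$ of potentials $\phi\in C_c^\infty(Q)$ with disjoint supports. Thus distinct $b$ give distinct velocities.

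\textbf{Non-isolation.} Replace $b$ by $\lambda b$. The velocity difference is $\lambda b$. The force difference is
\[
\lambda\bigl(\partial_tb-\nu\Delta b+(U\cdot\nabla)b+(b\cdot\nabla)U\bigr)+\lambda^2(b\cdot\nabla)b .
\]
All of these are supported in the fixed compact set $\overline Q$, on which $U$ and its derivatives are bounded because $\tau_1<1$. Every fixed $C^k$ seminorm of the differences is therefore $O(|\lambda|)$, which tends to zero. The differences are nonzero for $\lambda\ne0$, so the solution is not isolated.

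The main point needing care is keeping $\overline Q$ away from the blowup time, so that $U$ is uniformly smooth there. This is guaranteed by $\tau_1<1$. The remaining steps are routine.
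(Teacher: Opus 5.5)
Your proposal is correct and follows the paper's own proof. The steps match: direct expansion of the momentum equation for $U+b$; the observation that $\supp b$ is a compact subset of $Q$, away from $t=0$ and $t=1$, so $U$ is uniformly smooth there; disjointly supported curls for infinite dimensionality; and the $O(|\lambda|)$ bound on the velocity and force differences for non-isolation. The only detail to add is that each potential $\phi$ must have curl not identically zero (a compactly supported gradient has zero curl), which the paper states explicitly.
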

\begin{proof}
Expanding the momentum equation for $U+b$ gives exactly the force in
\eqref{eq:affine}; its divergence remains zero. Every correction term is supported in $\supp b$, a compact subset of $Q$. The original blowup solution is smooth with bounded derivatives of every fixed order on a neighborhood of that set. Thus the force correction extends smoothly by zero to the whole positive time axis, including time one, despite the singularity of $U$ there.

Since $b=0$ near time zero and near time one,
$\widetilde U(0)=0$ and $\widetilde U=U$ on a fixed late interval.
The same unbounded-speed limit holds. The energy and dissipation norms of $b$ are finite by compact smooth support, so the triangle inequality gives the corresponding bounds for $\widetilde U$.

To verify infinite dimensionality explicitly, choose countably many disjoint small balls inside $B_0$. In each ball choose a smooth compactly supported vector potential with nonzero curl, and multiply that curl by a fixed nonzero time bump in $(\tau_0,\tau_1)$. These divergence-free fields are linearly independent because their spatial supports are disjoint. Their finite linear combinations belong to the allowed class. The map $b\mapsto U+b$ is injective and affine, so its image is infinite dimensional.

For the non-isolation statement, the velocity difference is $\lambda b$, while the force difference has the form
\[
 \lambda L_U b+\lambda^2(b\cdot\nabla)b,\qquad
 L_Ub=\partial_tb-\nu\Delta b+(U\cdot\nabla)b+(b\cdot\nabla)U.
\]
On the fixed compact support, all coefficients and their derivatives are bounded. Hence for each integer $m\ge0$ the $C^m$ norm of the force difference is at most $C_m|\lambda|+C_m'\lambda^2$, and the velocity difference is at most $C_m''|\lambda|$. Both tend to zero. The forces vary in this family; no open singularity basin for one fixed force follows.
\end{proof}

\begin{proposition}\label{prop:multiple}
Fix finitely many disjoint interior balls $B_1,\ldots,B_N$ in the torus or in a bounded three-dimensional domain. At fixed $\nu>0$ and $T>0$, there is a smooth forced solution with zero initial velocity, smooth on $[0,T)$, with finite energy and dissipation, such that
\[
 \limsup_{t\uparrow T}\norm{u(t)}_{L^\infty(B_j)}=\infty
 \qquad(1\le j\le N).
\]
In a bounded domain its boundary condition is homogeneous no-slip.
\end{proposition}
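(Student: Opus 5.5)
We place $N$ rescaled copies of the building block, one in each ball, with zero background velocity. Because the background is zero, no vector-potential correction is needed. Fix points $x_j\in B_j$. Choose $\eps>0$ so small that $2\eps^2<T$ and so that the sets $x_j+\eps K_*$ lie in closed balls strictly inside $B_j$; for the torus, each $B_j$ should sit inside a fundamental cube as in Lemma~\ref{lem:localization}. With $t_\eps=T-\eps^2$, define $U_{\eps,j},P_{\eps,j},F_{\eps,j}$ as in \eqref{eq:scaling} with $x_0$ replaced by $x_j$, and set
\[
 u=\sum_{j=1}^N U_{\eps,j},\qquad p=\sum_{j=1}^N P_{\eps,j},\qquad f=\sum_{j=1}^N F_{\eps,j}.
\]
On the torus, periodize each summand; in a bounded domain, extend each by zero. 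Subtract the spatial mean from $p$ to normalize it.

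Next we verify the equation. By Proposition~\ref{prop:scaling}, each triple solves the momentum equation at viscosity $\nu$, starts from zero, and is divergence free. Their spatial supports are contained in the disjoint closed balls for all $t<T$. Consequently every cross term $(U_{\eps,i}\cdot\nabla)U_{\eps,j}$ with $i\ne j$ vanishes identically, so the sum solves \eqref{eq:NS} exactly on $[0,T)$ with force $f$. This is the argument of Theorem~\ref{thm:insertion} with $b_\eps$ replaced by the other summands. Each $F_{\eps,j}$ is globally smooth, with time support compact in $(0,\infty)$ because $t_\eps>0$ and $F$ vanishes near zero. Hence $f\in\FF$ on the torus, and in the bounded domain $f$ is smooth on $\overline\Omega\times[0,\infty)$ and supported away from $\partial\Omega$. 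The velocity vanishes in a boundary collar, so the no-slip condition holds.

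For the bounds, energy and dissipation are finite because \eqref{eq:packetEscale} gives $\norm{u}_{E_T}\le N(M+D)\eps^{1/2}$ by the triangle inequality; disjointness even gives the $\ell^2$ sum. On $B_j$ we have $u=U_{\eps,j}$, so $\norm{u(t)}_{L^\infty(B_j)}=\eps^{-1}\norm{U((t-t_\eps)/\eps^2)}_\infty$. By \eqref{eq:packetblowup}, its $\limsup$ as $t\uparrow T$ is infinite for every $j$ simultaneously, since all copies share the same blowup time $T$. Smoothness on $[0,T)$ is clear.

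The main step needing care is to identify this constructed field with the unique classical solution for $(0,f)$. This ensures the statement concerns the solution of the force, not merely some solution. On the torus, this follows from uniqueness in Proposition~\ref{prop:local} on each $[0,T']$ with $T'<T$. In a bounded domain, use the difference-energy and Gr\"onwall argument from the proof of Corollary~\ref{cor:boundary}; the boundary terms vanish there. The only other subtlety is the periodization: a single copy of each support must lie in the torus, which the choice of $\eps$ guarantees.
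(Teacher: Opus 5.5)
Your proposal is correct and matches the paper's proof: superpose rescaled copies of the building block with common terminal time $T$ in disjoint balls, so the cross-advection terms vanish, then read off the blowup in each ball and the energy and dissipation bounds from the scaling identities. The differences are cosmetic: the paper allows a separate scale $\eps_j$ for each ball, and it does not spell out your extra identification with the unique classical solution, which the statement does not require.
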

\begin{proof}
Choose $x_j\in B_j$ and scales $\eps_j>0$ sufficiently small that the complete scaled compact set $x_j+\eps_jK_*$ lies strictly inside $B_j$ and $\eps_j^2<T$. Apply \eqref{eq:scaling} to each building block with start time $T-\eps_j^2$, and denote the resulting fields by $U_j,P_j,F_j$.
Set
\[
 u=\sum_{j=1}^N U_j,\qquad p=\sum_{j=1}^N P_j,\qquad f=\sum_{j=1}^N F_j.
\]
For $i\ne j$, the supports of $U_i$ and $U_j$ are separated, so
$(U_i\cdot\nabla)U_j=0$. The sum therefore satisfies the momentum equation exactly. All force supports are compact in positive time and all forces are smooth. Initial vanishing gives $u(0)=0$.

On $B_j$ the velocity equals $U_j$, proving the stated separate limsup for each ball. Disjoint supports also give
\[
 \sup_{t<T}\norm{u(t)}_2^2\le M^2\sum_{j=1}^N\eps_j,\qquad
 \int_0^T\norm{\nabla u(t)}_2^2\dd t=D^2\sum_{j=1}^N\eps_j.
\]
These sums are finite. In a bounded domain, all fields vanish in a boundary collar; hence $u=0$ on the boundary. On the torus a spatially constant pressure adjustment gives the chosen gauge.
\end{proof}
Only finitely many solutions are superposed, all with terminal time $T$. The sequences along which their maxima diverge may differ; the conclusion is a separate limsup statement in each ball.

\begin{proposition}\label{prop:conservative}
On the torus, let $f=-\nabla\phi$ for a globally defined periodic scalar
$\phi$. On a bounded domain, let $f=-\nabla\phi$ and impose homogeneous no-slip velocity. In either case, any smooth solution with zero initial velocity is identically zero on its classical lifespan.
\end{proposition}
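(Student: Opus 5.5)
The plan is to use the classical energy identity, with the gradient force absorbed into the pressure. Since $f=-\nabla\phi$, the momentum equation can be rewritten as
\[
 \partial_tu+(u\cdot\nabla)u-\nu\Delta u+\nabla(p+\phi)=0,\qquad \nabla\cdot u=0 .
\]
Thus $(u,p+\phi)$ is a smooth solution of the \emph{unforced} equation with modified pressure. On the torus, $\phi$ is globally periodic, so $p+\phi$ is again a periodic scalar. On a bounded domain, we keep the same no-slip boundary condition.

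Fix any $S$ strictly inside the classical lifespan. Take the $L^2$ inner product of the equation with $u$ on $[0,S]$; smoothness on this closed interval justifies every integration by parts.
\begin{enumerate}
\item The transport term $\int(u\cdot\nabla)u\cdot u=\tfrac12\int u\cdot\nabla|u|^2$ vanishes by $\nabla\cdot u=0$. On the torus this uses periodicity, and on the bounded domain it uses $u|_{\partial\Omega}=0$.
\item The pressure term $\int\nabla(p+\phi)\cdot u$ vanishes for the same reasons. This is the step where global definedness of $\phi$ is essential: a multivalued potential, such as one with nonzero period increments on $\TT$, would leave a constant-field contribution $\int c\cdot u$, which need not vanish.
\item The viscous term gives $\nu\|\nabla u\|_2^2$, with no boundary contribution thanks to periodicity or no-slip.
\end{enumerate}
Combining these gives $\tfrac{d}{dt}\|u\|_2^2=-2\nu\|\nabla u\|_2^2\le0$. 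With $u(0)=0$, this forces $u\equiv0$ on $[0,S]$, and since $S$ was arbitrary, on the whole lifespan.

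For the bounded domain, I would alternatively invoke the uniqueness argument already used in Corollary~\ref{cor:boundary}: $(0,-\phi)$ solves the problem with force $-\nabla\phi$, so the difference-energy estimate identifies $u$ with zero. On the torus, Proposition~\ref{prop:local} gives uniqueness directly once the zero solution is checked. The only mild obstacle is the regularity needed at the boundary for the bounded domain. I would assume $\phi$ smooth up to $\overline\Omega$ on each closed slab, following the smoothness convention of Corollary~\ref{cor:boundary}, so that the boundary integrals make sense and vanish. As a consistency check, the pressure is then $p=-\phi$ up to a function of time, which matches the zero-mean normalization after a constant adjustment.
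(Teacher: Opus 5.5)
Your proposal is correct and is essentially the paper's argument. The paper likewise uses incompressibility together with periodicity or the vanishing boundary trace to get $\int f\cdot u=-\int\nabla\phi\cdot u=0$, so the energy identity from $u(0)=0$ gives $\frac12\|u(t)\|_2^2+\nu\int_0^t\|\nabla u(s)\|_2^2\,ds=0$ (absorbing $\phi$ into the pressure just rephrases that step), and your remark on why $\phi$ must be globally periodic matches the paper's closing comment on affine potentials.
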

\begin{proof}
Incompressibility and periodicity or the zero normal boundary trace give
\[
 \int f\cdot u
 =-\int\nabla\phi\cdot u=0.
\]
The classical energy identity, integrated from zero, becomes
\[
 \frac12\norm{u(t)}_2^2
 +\nu\int_0^t\norm{\nabla u(s)}_2^2\dd s=0.
\]
Both terms are nonnegative, so $u(t)=0$ at every time before blowup.
The periodic-potential condition excludes affine potentials, whose gradients need not be removable by the allowed periodic pressure gauge.
\end{proof}

Thus a nontrivial solution with zero initial velocity requires a nonconservative force.

\section{The whole space}\label{sec:whole}
We now work on $\R^3$, with the data classes $\mathcal X_{\R}$ and
$\mathcal F_{\R}$ from Section~\ref{sec:preliminaries}. The construction
is again spatially local. The new issues are the behavior of negative
Sobolev norms near frequency zero and the absence of a spectral gap.
\subsection{Statement of the density theorem}
\begin{theorem}[Two whole-space Sobolev thresholds]\label{thm:Rmain}
Fix $\nu,T>0$, let $q\in\{1,2\}$, and set $s_q=2/q-3/2$. In the relative $L^q(0,\infty;H^s(\R^3))$ topology on $\mathcal F_{\R}$:
\begin{enumerate}[label=(\roman*),nosep]
\item For every fixed $a\in\mathcal X_{\R}$, $\mathcal B^\R_{\nu,a,T}$ is dense if $s<s_q$.
\item For zero initial velocity, $\mathcal B^\R_{\nu,0,T}$ is dense if and only if $s<s_q$.
\end{enumerate}
Thus the thresholds are $1/2$ for $L^1_tH^s_x$ and $-1/2$ for $L^2_tH^s_x$. Around every given solution smooth through $T$, the approximating solution can have the same initial velocity, the same earlier history, singularity exactly at $T$, and a velocity difference tending to zero in $E_T$.
\end{theorem}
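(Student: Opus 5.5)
Density below threshold follows the torus argument of Proposition~\ref{prop:density}. Fix $a\in\mathcal X_{\R}$ and $g\in\mathcal F_{\R}$. If $T^\nu_{\max,\R}(a,g)\le T$, take $f=g$. Otherwise the solution is smooth on $[0,T+\delta]$, and we glue $U_\eps$ together with the potential correction $w_\eps$ of Lemma~\ref{lem:potential} into a small ball around any point $x_0$. Only local smoothness of $v$ near $x_0$ is used, so Lemmas~\ref{lem:potential}--\ref{lem:correction} and the algebra of Theorem~\ref{thm:insertion} carry over verbatim, with pressure gradient $\nabla\pi+\nabla P_\eps$. The new force is $g+H_\eps+F_\eps$. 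It lies in $\mathcal F_{\R}$ because the added terms are smooth and compactly supported in spacetime. Uniqueness in Proposition~\ref{prop:local}, together with the unbounded speed, gives lifespan exactly $T$. Convergence in $E_T$ comes from \eqref{eq:packetEscale} and \eqref{eq:wE}.

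The new issue is the force estimate in $L^q_tH^s_x(\R^3)$ for $q\in\{1,2\}$ and $s<s_q$, including negative $s$, which is needed at $q=2$. For $s\ge0$, Plancherel and dilation give $\|F_\eps\|_{L^q_t\dot H^s}=\eps^{2/q-3/2-s}\|F\|_{L^q_t\dot H^s}$, and the $L^2$ part is of order $\eps^{2/q-3/2}$, which is harmless when $s_q>0$. For $H_\eps$, the extra factor $\eps$ from Lemma~\ref{lem:correction} gives the same bound with one more power.

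The main obstacle is negative $s$, especially $q=2$ with $s<-1/2$. Here the inhomogeneous weight $\langle\xi\rangle^{2s}$ must be split into low and high frequencies:
\[
\|z\|_{H^s}^2\le\int_{|\xi|\le1}|\widehat z|^2\dd\xi+\int_{|\xi|>1}|\xi|^{2s}|\widehat z|^2\dd\xi .
\]
For the low-frequency part, since $\widehat{F_\eps}(\xi)=\eps^{-3/2}\cdot\eps^{3}\cdot\eps^{-3/2}\ldots$, we compute $\widehat{F_\eps(t)}(\xi)=\eps^{0}\widehat F(\eps\xi,\sigma)$ up to phase, using the $L^1$ normalization $\eps^{-3}\cdot\eps^3$. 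Its $L^\infty_\xi$ size is $O(1)$, so the low-frequency $L^2_\xi$ piece is $O(1)$ pointwise in time. Time integration over an interval of length $\eps^2$ then gives $\eps^{2/q}$, which tends to zero.

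For the high-frequency part, substitute $\zeta=\eps\xi$ to get $\eps^{-3-2s}\int_{|\zeta|>\eps}|\zeta|^{2s}|\widehat F(\zeta)|^2\dd\zeta$. When $s>-3/2$ this is bounded by $\eps^{-3-2s}\|F\|_{\dot H^s}^2$, which is finite because $|\zeta|^{2s}$ is locally integrable and $\widehat F$ is bounded. When $s\le-3/2$ we simply use $\langle\xi\rangle^{2s}\le\langle\xi\rangle^{-3+}$ monotonicity, since smaller $s$ only helps. Time integration gives the rate $\eps^{2/q-3/2-s}$, which tends to zero exactly when $s<s_q$. The same computation applies to $H_\eps=\eps^{-2}Q_\eps$, with one extra power of $\eps$ and uniform bounds on $\widehat{Q_\eps}$.

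Non-density for $a=0$ and $s\ge s_q$ follows Corollary~\ref{cor:nondensity}. Since $\|\cdot\|_{H^{s_q}}\le\|\cdot\|_{H^s}$, it suffices to treat $s=s_q$. We need a whole-space analogue of Proposition~\ref{prop:critical}: if $\|g\|_{L^q_tH^{s_q}}<c\nu$, then $T^\nu_{\max,\R}(0,g)=\infty$. We bound the inhomogeneous norm from below by the homogeneous one through $\|g\|_{\dot H^{s_q}}\lesssim\|g\|_{H^{s_q}}$. This is immediate for $s_q=1/2$. For $s_q=-1/2$ it holds only on high frequencies, and at low frequencies $\langle\xi\rangle^{-1}\ge|\xi|^{-1}$ fails.

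So for $q=2$ we instead use the energy-type critical theory. The force lies in $L^2_t\dot H^{-1/2}$ only after controlling low frequencies. We split $g=g_{\le1}+g_{>1}$. The piece $g_{>1}$ is small in $L^2_t\dot H^{-1/2}$, and $g_{\le1}$ is small in $L^2_tL^2$, hence in $L^2_t\dot H^{-1}$. Then $e^{t\nu\Delta}$ maps this force into a small element of $L^\infty_t\dot H^{1/2}\cap L^2_t\dot H^{3/2}$, the space in which Danchin's fixed point runs. I would verify the linear heat estimate for $L^2_t\dot H^{-1}$ low-frequency forcing, which gives $L^\infty_t\dot H^{0}$ control, combined with the Bernstein inequality on $|\xi|\le1$.

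For $q=1$, the argument is directly \cite[Theorem~2.3.1]{danchin} with $\dot H^{1/2}$ force. Global smoothness then follows from Proposition~\ref{prop:local}.
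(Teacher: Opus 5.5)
Your density half is essentially the paper's argument (Theorem~\ref{thm:Rinsert}). Splitting the inhomogeneous weight at $|\xi|=1$, instead of bounding the homogeneous $\dot H^s$ norm of the compact profile for $-3/2<s<0$, gives the same rate $\eps^{2/q-3/2-s}$. Your $q=1$ converse is Proposition~\ref{prop:Rcritical1}.

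The gap is the $q=2$ converse. You set out to prove that $\|g\|_{L^2_tH^{-1/2}_x}<c\nu$ implies $T^\nu_{\max,\R}(0,g)=\infty$, and this is false. Rescale the building block to large scales: $F_\lambda(x,t)=\lambda^{-3}F(x/\lambda,t/\lambda^2)$ lies in $\mathcal F_{\R}$, and
\[
 \|F_\lambda\|_{L^2_tH^{-1/2}_x}\le\|F_\lambda\|_{L^2_tL^2_x}=\lambda^{-1/2}\|F\|_{L^2_tL^2_x}\longrightarrow0 .
\]
Yet $\lambda^{-1}U(x/\lambda,t/\lambda^2)$ is the solution from zero velocity, and it blows up at $t=\lambda^2$.

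The step that fails is your passage from smallness of $g_{\le1}$ in $L^2_tL^2_x$ to smallness in $L^2_t\dot H^{-1}_x$. On $|\xi|\le1$ one has $|\xi|^{-2}\ge1$, so the inequality runs the other way. In the example, $\|F_\lambda\|_{L^2_t\dot H^{-1}_x}=\lambda^{1/2}\|F\|_{L^2_t\dot H^{-1}_x}\to\infty$. Low-frequency forcing that is small in $L^2_tL^2_x$ over an infinite time interval can accumulate into a large critical norm. So no global small-data argument in this inhomogeneous topology can close.

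The missing idea is that non-density at the fixed time $T$ only requires $T^\nu_{\max,\R}(0,g)>T$. The radius of the regular ball may therefore depend on $T$, and the example shows that it must. On $[0,S]$ the low-frequency Duhamel term satisfies
\[
 \sup_{0\le t\le S}\Bigl\|\int_0^te^{\nu(t-r)\Delta}\PP g_{\le1}(r)\dd r\Bigr\|_2\le S^{1/2}\|g_{\le1}\|_{L^2_tL^2_x}.
\]
By Bernstein on $|\xi|\le1$, this controls the term in $L^\infty(0,S;\dot H^{1/2})\cap L^2(0,S;\dot H^{3/2})$ with $S$-dependent constants. The high-frequency part is handled by your $L^2_t\dot H^{-1/2}$ maximal-regularity estimate. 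The critical fixed point then closes on $[0,S]$ below an $S$-dependent threshold. Persistence of regularity and local existence at time $S$ then give lifespan greater than $S$. This is Proposition~\ref{prop:Rcritical2}, with radius $c\nu^{3/2}e^{-C\nu S}$ and $S=T$.
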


The force classes remain smooth, equipped with the relative norm topology. For $q=2$, the converse uses a regular neighborhood whose radius depends on $T$.

\subsection{Local construction and negative Sobolev estimates}
The compact solution of Theorem~\ref{thm:packet} is already defined on
$\R^3$. Use the scaling~\eqref{eq:scaling} directly, without
periodization. The energy identities~\eqref{eq:packetEscale} and mixed
force identity~\eqref{eq:packetFscale} follow from the same changes of
variables. The vector potential in Lemma~\ref{lem:potential} is local,
so its proof applies in any Euclidean ball. In
Lemma~\ref{lem:correction}, the support, derivative, energy, and mixed
Lebesgue bounds are also local; only the final transfer of fractional
norms to the torus is unnecessary. Thus the same $w_\eps$ and $H_\eps$
are available on $\R^3$, with the same uniform bounds. We record the
result and prove the additional negative-order estimates.
\Needspace{16\baselineskip}
\begin{theorem}\label{thm:Rinsert}
Let $(v,\pi,g)$ be the solution for $a\in\mathcal X_{\R}$ and $g\in\mathcal F_{\R}$, smooth through $T+\delta$ for some $\delta>0$. Fix any nonempty open ball $B\subset\R^3$. For all sufficiently small $\eps>0$, there are $g_\eps\in\mathcal F_{\R}$ and a solution $u_\eps$ such that
\[
 T^\nu_{\max,\R}(a,g_\eps)=T,\quad
 \limsup_{t\uparrow T}\norm{u_\eps(t)}_\infty=\infty,\quad
 u_\eps=v\ \ (0\le t\le T-2\eps^2).
\]
The velocity difference is supported inside $B$ at every $t<T$, and $g_\eps-g\in C_c^\infty(B\times(0,\infty))$. Moreover,
\begin{equation}\label{eq:REclose}
 \norm{u_\eps-v}_{E_T}\le(M+D)\eps^{1/2}+C\eps^{3/2}.
\end{equation}
For $q\in\{1,2\}$ the force difference tends to zero in $L^q_tH^s_x$ whenever $s<2/q-3/2$. All of these conclusions hold for the same family of glued solutions.
\end{theorem}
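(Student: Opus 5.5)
The construction is the one in \eqref{eq:insertion}, now with no periodization: set $u_\eps=v+w_\eps+U_\eps$, $\nabla p_\eps=\nabla\pi+\nabla P_\eps$ and $g_\eps=g+H_\eps+F_\eps$. The building block comes from \eqref{eq:scaling}, centred at some $x_0\in B$, and $w_\eps,H_\eps$ come from Lemmas~\ref{lem:potential}--\ref{lem:correction}. For small $\eps$ all supports lie in $B$.

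\textbf{Exactness of the equation.} The proof of Theorem~\ref{thm:insertion} carries over word for word. The mixed terms $(b_\eps\cdot\nabla)U_\eps+(U_\eps\cdot\nabla)b_\eps$ vanish by \eqref{eq:bgzero}.

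\textbf{Pressure normalization.} This is a small extra point on $\R^3$. The compact $P_\eps$ has $\nabla P_\eps\in L^2$. Hence $\nabla p_\eps$ equals the field $G$ of \eqref{eq:Rpressure} for the new data, because both are $L^2$ gradients. Their difference is a gradient, and it is also divergence free after applying $I-\PP$, so it is zero.

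\textbf{Force class and lifespan.} Since $g_\eps-g=H_\eps+F_\eps\in C_c^\infty(B\times(0,\infty))$, we get $g_\eps\in\mathcal F_{\R}$. Uniqueness in Proposition~\ref{lem:Rlocal} identifies $u_\eps$ with the maximal solution on $[0,T)$. The unbounded speed $\|U_\eps(t)\|_\infty\to\infty$ then gives lifespan exactly $T$. The estimate \eqref{eq:REclose} follows from \eqref{eq:packetEscale} and \eqref{eq:wE}.

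\textbf{Force estimates for $s\ge0$.} Plancherel and dilation give
\[
\|F_\eps\|_{L^q_t\dot H^s_x}=\eps^{2/q-3/2-s}\|F\|_{L^q_t\dot H^s_x}.
\]
The inhomogeneous $L^2$ part is $\eps^{2/q-3/2}$, which is harmless when $s<s_q$, since then $s_q>s\ge0$. The same computation applied to $H_\eps=\eps^{-2}Q_\eps(z,\sigma)$, with $Q_\eps$ uniformly smooth and supported in a fixed cylinder, gives one extra power $\eps^{2/q-1/2-s}$.

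\textbf{Force estimates for $s<0$.} For $q=1$ and $s<0$ it suffices to use $\|z\|_{H^s}\le\|z\|_2$ together with the decay $\eps^{1/2}$. For $q=2$ the threshold is $-1/2$, so negative orders are essential, and this is the main obstacle. The $L^2$ factor $\eps^{-1/2}$ diverges, and homogeneous scaling $\eps^{-1/2-s}$ is only valid if $\|F(\sigma)\|_{\dot H^s}<\infty$. That requires low-frequency control, which fails for $s\le-3/2$ unless $F$ has vanishing moments.

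The plan is to split the frequency integral of $\langle\xi\rangle^{2s}|\widehat{F_\eps}(\xi)|^2$ at $|\xi|=1$.
\begin{itemize}
\item \emph{High frequencies.} Since $\langle\xi\rangle^{2s}\le C|\xi|^{2s}$ there, this part is bounded by the homogeneous norm $\|F_\eps\|_{\dot H^s}^2$ restricted to $|\xi|\ge1$. After dilation this becomes $\eps^{-3-2s}\int_{|\eta|\ge\eps}|\eta|^{2s}|\widehat F(\eta)|^2\dd\eta$. For $-3/2<s<-1/2$ the integral is bounded uniformly in $\eps$, because $|\eta|^{2s}$ is locally integrable and $\widehat F$ is bounded. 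For $s\le-3/2$ it is at most $C\eps^{3+2s}$ or $C\log(1/\eps)$, and one can always lower $s$, since the norms decrease in $s$. Either way this part is $O(\eps^{-1-2s})$ per unit time when $-3/2<s<-1/2$, and it tends to zero after the time factor $\eps^2$.
\item \emph{Low frequencies.} Use $|\widehat{F_\eps}(\xi)|\le\|F_\eps\|_{L^1}=\|F\|_{L^1}$, which is scale invariant in space. The contribution $\int_{|\xi|\le1}|\widehat{F_\eps}|^2\dd\xi$ is then $O(1)$ per unit time, and after time integration over an interval of length $\eps^2$ it gives $O(\eps^2)$.
\end{itemize}
Combining the two parts gives $\|F_\eps\|_{L^2_tH^s}^2\le C(\eps^{-1-2s}+\eps^2)\to0$ for $s<-1/2$, by monotonicity in $s$. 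The same split applies to $H_\eps$, using $\|H_\eps\|_{L^1_x}\le C\eps$ uniformly in time and the pointwise bounds \eqref{eq:derivativebounds}. This yields at least one further power of $\eps$.
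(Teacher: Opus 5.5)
Your proposal is correct and is essentially the paper's proof. It uses the same gluing $u_\eps=v+w_\eps+U_\eps$, $g_\eps=g+H_\eps+F_\eps$, the same identification of the pressure gradient via $I-\PP$, and the same uniqueness and lifespan argument. For $q=2$, $s<0$ it rests on the same two ingredients: the bound $|\widehat{F_\eps}|\le\norm{F_\eps}_{L^1}$ near zero frequency and local integrability of $|\xi|^{2s}$ for $s>-3/2$. The only variation is that you split at $|\xi|=1$ in physical frequency, whereas the paper uses $(1+|\xi|^2)^s\le|\xi|^{2s}$ globally and scales the finite $\dot H^s$ norm of the profile.

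There are two harmless slips. First, Theorem~\ref{thm:packet} only gives $\limsup_{t\uparrow T}\norm{U_\eps(t)}_\infty=\infty$, not a limit. Second, the high-frequency part is $O(\eps^{-3-2s})$ at each fixed time and becomes $O(\eps^{-1-2s})$ only after the time factor $\eps^2$; your final bound $C(\eps^{-1-2s}+\eps^2)$ records this correctly.
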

\begin{proof}
Place the fields in \eqref{eq:scaling} inside $B$, and choose the compact cutoffs of Lemma~\ref{lem:potential}. Define $A,w_\eps,H_\eps$ by \eqref{eq:potential}, \eqref{eq:cutoff} and \eqref{eq:H}. These formulas only use $v$ on a fixed compact neighborhood of $B$ and of time $T$, so the preceding lemmas apply. In particular $v+w_\eps=0$ on a neighborhood of the active building block support. Put
\[
 u_\eps=v+w_\eps+U_\eps,\qquad
 p_\eps=\pi+P_\eps,\qquad
 g_\eps=g+H_\eps+F_\eps.
\]
Writing $b_\eps=v+w_\eps$, expansion of the equation leaves only the cross-advection terms $(b_\eps\cdot\nabla)U_\eps+(U_\eps\cdot\nabla)b_\eps$. They vanish on a neighborhood of the building block support because $b_\eps=0$ there. Outside that support, the building block and its derivatives vanish; smoothness gives the same conclusion at the support boundary. When the building block is inactive it is identically zero. Thus the two terms vanish pointwise everywhere, and the equation is exact. The pressure difference may be chosen to be the compact scalar $P_\eps$: the equation gives an $L^2$ pressure gradient, and applying $I-\PP$ recovers \eqref{eq:Rpressure}. Both force corrections are globally smooth and spacetime compact, including across $T$, so their sum preserves membership in $\mathcal F_{\R}$.

The initial value and earlier history are unchanged. At each $t<T$ the compact blowup solution and cutoff correction lie in $H^\infty$, and the maximum norm of $U_\eps$ is unbounded at $T$. Proposition~\ref{prop:local} identifies the solution with the unique maximal solution. An extension through $T$ would be bounded in $C_tH^2$ on a neighborhood of $T$, hence bounded in $L^\infty_x$ by \eqref{eq:Rproduct}, contradicting the blowup of $U_\eps$. Thus its maximal lifespan is exactly $T$. Equation~\eqref{eq:packetEscale} and the correction estimate in Lemma~\ref{lem:correction} give \eqref{eq:REclose} by the triangle inequality.

It remains to justify every asserted force topology, including negative orders. Let
\[
 \beta(q,s)=\frac2q-\frac32-s.
\]
For $0\le s\le1$, Euclidean scaling and
$\norm{z}_{H^s}\le C_s(\norm{z}_2+\norm{z}_{\dot H^s})$ give
\begin{align}
 \norm{F_\eps}_{L^q_tH^s_x}
 &\le C_{q,s}\bigl(\eps^{2/q-3/2}+\eps^{\beta(q,s)}\bigr),\nonumber\\
 \norm{H_\eps}_{L^q_tH^s_x}
 &\le C_{q,s}\bigl(\eps^{2/q-1/2}+\eps^{\beta(q,s)+1}\bigr).
 \label{eq:RpositiveScale}
\end{align}
The second line uses the uniformly smooth compact rescaled profile of Lemma~\ref{lem:correction}, whose amplitude is $\eps^{-2}$.

For $-3/2<s<0$, every smooth compact profile has finite $\dot H^s$ norm. Indeed, on $|\xi|<1$ its Fourier transform is bounded by its $L^1$ norm and $\int_{|\xi|<1}|\xi|^{2s}\dd\xi<\infty$; on $|\xi|\ge1$ its $L^2$ norm suffices. These bounds are uniform for the rescaled correction profiles, which have common compact support and uniform derivatives. Since
$(1+|\xi|^2)^s\le|\xi|^{2s}$, scaling gives
\begin{equation}\label{eq:RnegativeScale}
 \norm{F_\eps}_{L^q_tH^s_x}\le C_{q,s}\eps^{\beta(q,s)},\qquad
 \norm{H_\eps}_{L^q_tH^s_x}\le C_{q,s}\eps^{\beta(q,s)+1}.
\end{equation}
The time norms run over the entire positive axis. Each rescaled force has time support of length $O(\eps^2)$, including any part after $T$.

For $q=1$, \eqref{eq:RpositiveScale} proves convergence for $0\le s<1/2$; all $s<0$ follow from $\norm{z}_{H^s}\le\norm{z}_2$. For $q=2$, \eqref{eq:RnegativeScale} proves convergence for $-3/2<s<-1/2$. If $s\le-3/2$, choose $r\in(-3/2,-1/2)$ with $r>s$ and use $\norm{z}_{H^s}\le\norm{z}_{H^r}$. This last step avoids making any false homogeneous scaling assertion at indices where a generic compact profile can have an infinite homogeneous norm.
\end{proof}

\subsection{The critical estimate for time-integrable forcing}
\begin{proposition}\label{prop:Rcritical1}
There is a universal $c>0$ such that, for $a\in\mathcal X_{\R}$ and $f\in\mathcal F_{\R}$,
\[
 \norm{a}_{\dot H^{1/2}}+\norm{f}_{L^1_t\dot H^{1/2}_x}<c\nu
 \quad\Longrightarrow\quad T^\nu_{\max,\R}(a,f)=\infty.
\]
In particular, for $a=0$, the ball $\norm{f}_{L^1_tH^{1/2}_x}<c\nu$ consists of globally regular inputs.
\end{proposition}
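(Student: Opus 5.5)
The plan mirrors the periodic Proposition~\ref{prop:critical}, but without the mean reduction, since there is no zero mode on $\R^3$. First I reduce to unit viscosity by the scaling of Proposition~\ref{prop:local}: with $\tau=\nu t$, $\widetilde u=\nu^{-1}u$, $\widetilde f=\nu^{-2}f$, one checks that $\|\widetilde u(0)\|_{\dot H^{1/2}}=\nu^{-1}\|a\|_{\dot H^{1/2}}$. The time change $\dd t=\nu^{-1}\dd\tau$ gives $\|\widetilde f\|_{L^1_\tau\dot H^{1/2}}=\nu^{-1}\|f\|_{L^1_t\dot H^{1/2}}$. The smallness hypothesis therefore becomes $\|a\|_{\dot H^{1/2}}+\|f\|_{L^1\dot H^{1/2}}<c$ at $\nu=1$, which explains the factor $c\nu$.

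Next I apply the critical small-data fixed-point theorem of Danchin~\cite[Theorem~2.3.1 and its proof]{danchin} to the projected equation~\eqref{eq:projected} with forcing $\PP f$, taking $p=r=2$. Here $\dot B^{1/2}_{2,2}=\dot H^{1/2}$, and $\PP$ is bounded on $\dot H^{1/2}$. Minkowski's inequality gives $\|\PP f\|_{\widetilde L^1\dot H^{1/2}}\le\|f\|_{L^1\dot H^{1/2}}$. The theorem then produces a global solution in $\widetilde L^\infty\dot H^{1/2}\cap L^1\dot H^{5/2}$, or equivalently in Kato's space $L^\infty\dot H^{1/2}\cap L^2\dot H^{3/2}$. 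The bound is $\|u\|\le 2(\|a\|_{\dot H^{1/2}}+\|f\|_{L^1\dot H^{1/2}})$. Heuristically, the bilinear estimate behind this is $\|\PP\nabla\cdot(u\otimes u)\|_{L^1\dot H^{1/2}}\le C\|u\|_{L^\infty\dot H^{1/2}}^{\cdots}$, closed through \eqref{eq:embeddings}.

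The main obstacle is to identify this critical solution with the classical maximal solution of Proposition~\ref{prop:local} and to show that the latter is global. I would argue as follows. The classical solution lies in $C([0,S];H^m)$, so on its lifespan it belongs to the critical class. Uniqueness of small critical solutions (weak--strong uniqueness in $L^\infty\dot H^{1/2}\cap L^2\dot H^{3/2}$) shows that the two coincide on $[0,T^\nu_{\max,\R})$. The critical bound then gives $\int_0^S\|\nabla u\|_{\dot H^{1/2}}^2\dd t<\infty$ uniformly in $S$.

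To reach the continuation criterion~\eqref{eq:criterion}, I would propagate regularity. The $L^2$ energy estimate, using $f\in L^1_tL^2_x$ since $f\in\mathcal F_\R$, bounds $\|u\|_{L^\infty L^2}$ and $\|\nabla u\|_{L^2L^2}$ on finite intervals. Next, an $H^1$ or $H^2$ energy estimate of the form $(\|u\|_{\dot H^k}^2)'+\|u\|_{\dot H^{k+1}}^2\lesssim\|u\|_{\dot H^{3/2}}^2\|u\|_{\dot H^k}^2+\|f\|_{\dot H^{k-1}}^2$ holds, which follows from \eqref{eq:embeddings} and H\"older. Combined with Gr\"onwall and $u\in L^2\dot H^{3/2}$, this gives $\int_0^S\|u\|_{H^2}^2<\infty$ for every finite $S$. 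Proposition~\ref{prop:local} then extends the solution, so $T^\nu_{\max,\R}=\infty$.

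The final claim is immediate: for $a=0$ we have $\|f(t)\|_{\dot H^{1/2}}\le\|f(t)\|_{H^{1/2}}$ by comparing Fourier weights.
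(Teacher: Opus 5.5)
Your proposal is correct and follows the paper's route: the $p=r=2$ case of Danchin's forced critical theorem, using $\dot B^{1/2}_{2,2}=\dot H^{1/2}$, Minkowski's inequality for the Chemin--Lerner force norm and boundedness of $\PP$, then identification with the classical maximal solution, and finally $\|f\|_{\dot H^{1/2}}\le\|f\|_{H^{1/2}}$ for the ball at $a=0$. Where the paper only cites persistence of regularity, you spell out the identification step, and it is sound: uniqueness in $L^\infty_t\dot H^{1/2}\cap L^2_t\dot H^{3/2}$ holds without smallness, and the $\dot H^1$ energy estimate with Gr\"onwall gives the continuation criterion \eqref{eq:criterion}. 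Two small wording corrections: the Chemin--Lerner solution space only embeds into Kato's space rather than being equivalent to it (the embedding is all you use), and what you need is that unconditional uniqueness, not fixed-point uniqueness among small solutions.
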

\begin{proof}
This is the $p=r=2$ case of Danchin's forced critical small-data theorem
\cite[Theorem~2.3.1, pp.~47--49]{danchin}. Indeed,
$\dot B^{1/2}_{2,2}=\dot H^{1/2}$ and
$\|f\|_{\widetilde L^1\dot H^{1/2}}\le\|f\|_{L^1\dot H^{1/2}}$;
the Leray projection is bounded on these spaces. The theorem gives a global
critical solution, and persistence of regularity for the smooth data in our
classes identifies it with the maximal classical solution of
Proposition~\ref{prop:local}. Finally
$\|f\|_{\dot H^{1/2}}\le\|f\|_{H^{1/2}}$ gives the stated open ball.
\end{proof}

\subsection{The critical estimate for square-integrable forcing}
\begin{proposition}\label{prop:Rcritical2}
For each $\nu,S>0$ there is $r_{\nu,S}>0$ such that
\[
 f\in\mathcal F_{\R},\qquad
 \norm{f}_{L^2(0,\infty;H^{-1/2})}<r_{\nu,S}
 \quad\Longrightarrow\quad T^\nu_{\max,\R}(0,f)>S.
\]
One can take $r_{\nu,S}=c\nu^{3/2}e^{-C\nu S}$ for suitable universal positive constants $c,C$.
\end{proposition}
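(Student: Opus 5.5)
We prove Proposition~\ref{prop:Rcritical2} with an energy estimate at the critical level $\dot H^{1/2}$, using the $L^2_tH^{-1/2}_x$ force as the natural dual of the dissipation $L^2_t\dot H^{3/2}_x$. By Proposition~\ref{prop:local} the solution is smooth on its maximal interval. The plan is to bound $\norm{u(t)}_{\dot H^{1/2}}$ and $\int\norm{u}_{\dot H^{3/2}}^2$ on $[0,\min(S,T_{\max}))$ and then feed these bounds into the continuation criterion~\eqref{eq:criterion}.

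The low frequencies need separate treatment, because $H^{-1/2}$ does not control $\dot H^{-1/2}$ there. Split $f=f_{\le1}+f_{>1}$ by a Fourier cutoff at $|\xi|=1$. On $|\xi|>1$ we have $\langle\xi\rangle^{-1}\sim|\xi|^{-1}$, so $\norm{f_{>1}}_{\dot H^{-1/2}}\le C\norm{f}_{H^{-1/2}}$. The low part satisfies $\norm{f_{\le1}}_{L^2}\le C\norm{f}_{H^{-1/2}}$ and is smooth.

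Pair $\Lambda u$ with the projected equation~\eqref{eq:projected}. This is justified since $u\in C_tH^m$ and $\PP$ is self-adjoint and commutes with $\Lambda$. It gives
\[
 \tfrac12\tfrac{\mathrm d}{\mathrm dt}\norm{u}_{\dot H^{1/2}}^2+\nu\norm{u}_{\dot H^{3/2}}^2
 =-\ip{\Lambda u}{(u\cdot\nabla)u}+\ip{\Lambda u}{f}.
\]
The nonlinear term is bounded by H\"older and~\eqref{eq:embeddings}:
\[
 \norm{\Lambda u}_3\norm{u}_3\norm{\nabla u}_3\le C\norm{u}_{\dot H^{1/2}}\norm{u}_{\dot H^{3/2}}^2 .
\]
For the high-frequency force, $|\ip{\Lambda u}{f_{>1}}|\le\norm{u}_{\dot H^{3/2}}\norm{f_{>1}}_{\dot H^{-1/2}}$, and Young absorbs a quarter of the dissipation. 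For the low-frequency force, $|\ip{\Lambda u}{f_{\le1}}|\le\norm{u}_{\dot H^{1/2}}^{1/2}\norm{u}_{\dot H^{3/2}}^{1/2}\norm{f_{\le1}}_2$ by interpolation. A second Young step, with the $L^2$ energy identity for low frequencies, controls this term; alternatively one bounds $\norm{\Lambda u}_2\le\norm{u}_{\dot H^{1/2}}+\norm{u}_{\dot H^{3/2}}$ and uses $\norm{\Lambda u}_2\norm{f_{\le1}}_2\le\norm{u}_{\dot H^{1/2}}^2+\norm{f_{\le1}}_2^2+\frac\nu4\norm{u}_{\dot H^{3/2}}^2+\nu^{-1}\norm{f_{\le1}}_2^2$. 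Setting $y=\norm{u}_{\dot H^{1/2}}^2$, the result is
\[
 y'+\nu\norm{u}_{\dot H^{3/2}}^2(1-C\nu^{-1}y^{1/2})\le C y+C\nu^{-1}\norm{f}_{H^{-1/2}}^2 .
\]
The linear term $Cy$ is where the factor $e^{-C\nu S}$ in $r_{\nu,S}$ comes from. One should rescale to $\nu=1$ by the reduction in Proposition~\ref{prop:local}: time $\tau=\nu t$ turns $S$ into $\nu S$, and the force norm picks up the factor $\nu^{-3/2}$, which explains $c\nu^{3/2}$.

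The core is a bootstrap. Assume $y\le(\nu/2C)^2$ on $[0,t_1]$. Then the dissipation coefficient is at least $1/2$, and Gr\"onwall with $y(0)=0$ gives
\[
 y(t)+\tfrac\nu2\int_0^t\norm{u}_{\dot H^{3/2}}^2\le C\nu^{-1}e^{Ct}r^2 .
\]
With $r=c\nu^{3/2}e^{-C\nu S}$ and $c$ small, this is strictly below the bootstrap threshold for all $t\le S$. Continuity of $y$ therefore closes the bootstrap on $[0,\min(S,T_{\max}))$.

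Finally we feed the criterion~\eqref{eq:criterion}. The $L^2$ energy estimate bounds $\norm{u}_{L^\infty_tL^2}$ and $\norm{\nabla u}_{L^2_tL^2}$, but only through $\int\norm{f}_2$, which is finite for $f\in\mathcal F_{\R}$. Interpolating between $L^2$ and $\dot H^{3/2}$ gives $\nabla u\in L^2_tL^2$. To reach $H^2$, a further $\dot H^1$ energy estimate using $\norm{u}_{L^4_t\dot H^1}$ gives $u\in L^2_t\dot H^2$ on $[0,S]$. Hence if $T_{\max}\le S$, the criterion extends the solution, a contradiction, and so $T_{\max}>S$.

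The main obstacle is the low-frequency force term. Since $H^{-1/2}$ is inhomogeneous, it is what forces the exponential-in-$S$ smallness rather than a uniform radius, and care is needed there so the Gr\"onwall constant depends only on $\nu S$.
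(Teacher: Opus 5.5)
Your proof is correct, but it takes a different route from the paper. The paper normalizes to $\nu=1$ and runs a Picard contraction in the inhomogeneous space $X_L=C([0,L];H^{1/2})\cap L^2(0,L;H^{3/2})$, $L=\nu S$. It quotes Danchin's inhomogeneous heat estimate and proves the bilinear bound through $X_L\hookrightarrow L^4_tH^1_x$ and $H^1\cdot H^1\hookrightarrow H^{1/2}$. There the factor $e^{CL}$ is just an upper bound for the finite-time heat constants, and persistence of regularity then identifies the critical solution with the classical one.

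You instead take the smooth maximal solution from Proposition~\ref{prop:local} and prove an a priori $\dot H^{1/2}$ bound by energy methods, splitting only the force at $|\xi|=1$. The high part is dual to the dissipation. The low part produces the linear Gr\"onwall term, which is the actual source of $e^{-C\nu S}$. You then close with a bootstrap, the $\dot H^1$ estimate, and the criterion \eqref{eq:criterion}. That step legitimately uses the qualitative finiteness of $\norm{f}_{L^1_tL^2_x}$ and $\norm{f}_{L^2_tL^2_x}$ for $f\in\mathcal F_{\R}$.

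What your route buys: it is self-contained, using only \eqref{eq:embeddings} and the continuation criterion already proved in the paper. It needs neither the cited heat and contraction machinery nor the identification step, and it shows exactly where the exponential comes from. What the paper's route buys: it produces a solution directly in the critical space and does not use smoothness of $f$ in the smallness argument.

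One bookkeeping point. In unscaled variables the linear Gr\"onwall coefficient must be of size $\nu$. For example, use $|\ip{\Lambda u}{f_{\le1}}|\le\norm{u}_{\dot H^{1/2}}\norm{f_{\le1}}_2$, which holds since $|\xi|\le|\xi|^{1/2}$ on $|\xi|\le1$. So your displayed $e^{Ct}$ should read $e^{C\nu t}$, and the unweighted $\norm{f_{\le1}}_2^2$ term has the wrong $\nu$-dependence. Doing all estimates after the rescaling to $\nu=1$, as you suggest, fixes both automatically.
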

\begin{proof}
Use the inhomogeneous heat estimate
\cite[Theorem~2.2.5, p.~44]{danchin} and the critical bilinear fixed-point
argument \cite[Lemma~2.3.2 and the proof of Theorem~2.3.1, pp.~48--49]{danchin}.
We spell out the low-frequency and viscosity adjustments. Normalize viscosity
by $u(t,x)=\nu v(\nu t,x)$ and $F(\tau,x)=\nu^{-2}f(\tau/\nu,x)$.
On $0\le\tau\le L:=\nu S$, set
\[
 X_L=C([0,L];H^{1/2})\cap L^2(0,L;H^{3/2}),
 \qquad
 \mathcal H F(t)=\int_0^t e^{(t-r)\Delta}\mathbb P F(r)\dd r.
\]
The cited heat estimate and the Sobolev product law give, with the sum norm
on $X_L$,
\[
 \|\mathcal H F\|_{X_L}\le Ce^{CL}\|F\|_{L^2H^{-1/2}},\qquad
 \|\mathcal B(v,w)\|_{X_L}\le Ce^{CL}\|v\|_{X_L}\|w\|_{X_L},
\]
where $\mathcal B(v,w)=-\mathcal H\operatorname{div}(v\otimes w)$.
For the second bound use $X_L\hookrightarrow L^4_tH^1_x$ and
$H^1\cdot H^1\hookrightarrow H^{1/2}$; the exponential is a convenient
upper bound for the finite-time constants in the inhomogeneous heat estimate.
Thus the cited contraction lemma applies if
$\|F\|_{L^2H^{-1/2}}<ce^{-CL}$, after changing universal constants.
Since
$\|F\|_{L^2(0,L;H^{-1/2})}=\nu^{-3/2}\|f\|_{L^2(0,S;H^{-1/2})}$,
the stated radius suffices. Persistence of smoothness and local existence
at the final time give lifespan strictly greater than $S$.
The finite-time inhomogeneous estimate is essential: small
$H^{-1/2}$ norm does not bound the homogeneous negative norm at low frequencies.
\end{proof}

\begin{proof}[Proof of Theorem~\ref{thm:Rmain}]
Fix $a,g$ and a positive radius in the indicated force norm. There are two cases. If $T^\nu_{\max,\R}(a,g)\le T$, take $f=g$. Otherwise choose $\delta>0$ so that the given smooth solution exists through $T+\delta$, and apply Theorem~\ref{thm:Rinsert}. For $s<s_q$, its force difference is smaller than the prescribed radius when $\eps$ is sufficiently small, its initial velocity is exactly $a$, and its lifespan is exactly $T$. This proves density for each fixed smooth initial velocity.

For zero initial velocity and $q=1$, Proposition~\ref{prop:Rcritical1} provides an open ball about zero disjoint from $\mathcal B^\R_{\nu,0,T}$ in $L^1_tH^{1/2}_x$. The inclusion $H^s\hookrightarrow H^{1/2}$ with norm at most one for $s\ge1/2$ gives the same obstruction in each stronger metric. For $q=2$, use Proposition~\ref{prop:Rcritical2} with $S=T$ and then $H^s\hookrightarrow H^{-1/2}$ for $s\ge-1/2$. These nonempty relative open balls prove non-density. The energy and earlier-history assertions are part of the gluing theorem.
\end{proof}

\subsection{Compact support, rapid decay, and completed force spaces}
Define the following two subclasses of $\mathcal F_{\R}$:
\begin{align*}
 \mathcal F_c&=C_c^\infty(\R^3\times(0,\infty);\R^3),\\
 \mathcal F_{\mathrm{rd}}
 &=\left\{f\in C^\infty(\R^3\times[0,\infty)):
 \sup_{x\in\R^3,\ t\ge0}(1+|x|+t)^N
       |\partial_x^\alpha\partial_t^jf(x,t)|<\infty
 \text{ for all }N,\alpha,j\right\}.
\end{align*}
Vector values are understood in the second line. The initial class corresponding to the rapid-decay formulation is $\mathcal S_\sigma=\mathcal S(\R^3;\R^3)\cap L^2_\sigma$. These are exactly the decay requirements relevant to \cite[equations (4)--(5)]{clay}; a common numerical bound on their seminorms is not imposed.

\begin{corollary}\label{cor:Rclasses}
Theorem~\ref{thm:Rmain} remains valid with $\mathcal F_{\R}$ replaced by $\mathcal F_c$ or $\mathcal F_{\mathrm{rd}}$. In particular it holds for every fixed $a\in\mathcal S_\sigma$ in the rapid-decay class, with the complete if-and-only-if classification at $a=0$.
\end{corollary}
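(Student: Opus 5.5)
The plan is to rerun the proof of Theorem~\ref{thm:Rmain} inside each subclass, checking three things: the gluing preserves the subclass, density holds in the relative topology, and the non-density balls still work.

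\textbf{Step 1: the subclasses lie in $\mathcal F_{\R}$ and are closed under the gluing.} Both inclusions $\mathcal F_c\subset\mathcal F_{\mathrm{rd}}\subset\mathcal F_{\R}$ should be elementary. A rapidly decaying smooth field has all $x$-derivatives bounded by $(1+|x|+t)^{-N}$. This gives $\|\partial_t^jf(t)\|_{H^m}\le C(1+t)^{-2}$ for each $j,m$, hence smoothness into $H^\infty$ and finiteness of the $L^1_tH^m_x$ and $L^2_tH^m_x$ norms. Theorem~\ref{thm:Rinsert} then shows that $g_\eps-g\in C_c^\infty(B\times(0,\infty))$. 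Adding a spacetime compactly supported smooth field preserves both subclasses. Hence $g\in\mathcal F_c$ gives $g_\eps\in\mathcal F_c$, and $g\in\mathcal F_{\mathrm{rd}}$ gives $g_\eps\in\mathcal F_{\mathrm{rd}}$. For initial data, note that $\mathcal S_\sigma\subset\mathcal X_{\R}$ and that $\mathcal F_{\mathrm{rd}}$ forces may be nonzero at $t=0$. This is harmless, because the lifespan $T^\nu_{\max,\R}$ is defined through Proposition~\ref{prop:local} for the ambient class $\mathcal F_{\R}$.

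\textbf{Step 2: density for fixed $a$ and $s<s_q$.} Fix $g$ in the subclass and a radius $\rho$. I will use the same two cases as in the proof of Theorem~\ref{thm:Rmain}. If $T^\nu_{\max,\R}(a,g)\le T$, take $f=g$. Otherwise Theorem~\ref{thm:Rinsert} gives $g_\eps$ in the same subclass by Step~1. It also gives lifespan exactly $T$ and $\|g_\eps-g\|_{L^q_tH^s_x}\to0$. The approximations therefore stay in the subclass, which is all that density in the relative topology requires.

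\textbf{Step 3: non-density at $a=0$ for $s\ge s_q$.} Propositions~\ref{prop:Rcritical1} and~\ref{prop:Rcritical2} (the latter with $S=T$) are stated for every $f\in\mathcal F_{\R}$. Intersecting their balls with the subclass gives a relative open set that contains $0$ and misses the breakdown set; it is nonempty because $0$ lies in both subclasses. The monotonicity $H^s\hookrightarrow H^{1/2}$, respectively $H^s\hookrightarrow H^{-1/2}$, with norm at most one extends the obstruction to all $s\ge s_q$, exactly as before.

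The only step I expect to need care is the rapid-decay membership in Step~1. Concretely, I must check that $\mathcal F_{\mathrm{rd}}$ is contained in $\mathcal F_{\R}$ and is closed under adding $C_c^\infty(B\times(0,\infty))$. The weights $(1+|x|+t)^N$ are bounded on compact sets, so closure under that addition is immediate. For the Bochner smoothness, I would bound the difference quotients in $t$ by Taylor's formula together with the uniform weighted bounds on $\partial_t^{j+1}\partial_x^\alpha f$, and integrate in $x$ using the $(1+|x|)^{-4}$ decay.
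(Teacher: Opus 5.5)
Your proposal is correct and follows the paper's proof. The spacetime-compact correction $g_\eps-g\in C_c^\infty(B\times(0,\infty))$ from Theorem~\ref{thm:Rinsert} preserves each subclass, so the two-case density argument stays inside it. For non-density, the critical balls of Propositions~\ref{prop:Rcritical1} and~\ref{prop:Rcritical2}, intersected with the subclass, remain nonempty relative open sets containing zero. Your explicit verification that $\mathcal F_{\mathrm{rd}}\subset\mathcal F_{\R}$ adds detail the paper omits: you get time decay of $\|\partial_t^jf(t)\|_{H^m}$ from the weighted bounds and Bochner smoothness from Taylor's formula or dominated convergence. The paper simply asserts this inclusion when it defines these classes as subclasses of $\mathcal F_{\R}$.
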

\begin{proof}
The initial value is preserved exactly, and the force correction in Theorem~\ref{thm:Rinsert} is spacetime compact. Addition of that correction preserves each stated force class, including all rapid-decay bounds. The two-case density proof therefore stays within the chosen subclass. Each critical regular ball contains zero and remains a nonempty relative open ball in that subclass. The local framework only requires $a\in\mathcal X_{\R}$, so it applies in particular to Schwartz initial data. Nothing here requires the nonzero given velocity or its pressure to be spatially compact.
\end{proof}

The force spaces in the next proposition are natural for the energy
class. With $V=H^1(\R^3;\R^3)\cap L^2_\sigma$, a finite-energy velocity
belongs to $L^\infty_tL^2_\sigma\cap L^2_tV$. A full distribution in
$H^{-1}$ restricts to an element of $V'$; the latter records only its
action on divergence-free tests. This distinction is discussed in
\cite[Definition~1 and Remark~4]{berselli}. Our realization of
$\dot H^{-1}$ gives the pairing
\[
 |\ip{h}{z}|\le\norm{h}_{\dot H^{-1}}\norm{\nabla z}_2,
 \qquad
 \left|\int_0^T\ip{f}{u}\dd t\right|
 \le\norm{f}_{L^2_t\dot H^{-1}_x}\norm{\nabla u}_{L^2_{t,x}}.
\]
These inequalities follow by Fourier Cauchy--Schwarz, first for Schwartz
functions and then by completion. The inhomogeneous $H^{-1}$ norm instead
pairs with $H^1$, while $L^1_tL^2_x$ pairs with $L^\infty_tL^2_x$.

\begin{proposition}\label{prop:Renergy}
Fix $a\in\mathcal X_{\R}$ and $\nu,T>0$. Smooth compact forces with $T^\nu_{\max,\R}(a,f)\le T$ are dense in each full Bochner space $L^q(0,\infty;H^s(\R^3))$ for $q\in\{1,2\}$ and $s<s_q$. They are also dense in $L^2(0,\infty;\dot H^{-1}(\R^3))$.

For every given smooth solution in Theorem~\ref{thm:Rinsert}, one may simultaneously arrange
\[
 \norm{u_\eps-v}_{E_T}\longrightarrow0,\qquad
 \norm{g_\eps-g}_{L^1_tL^2_x}
 +\norm{g_\eps-g}_{L^2_tH^{-1}_x}
 +\norm{g_\eps-g}_{L^2_t\dot H^{-1}_x}\longrightarrow0.
\]
The homogeneous norm here is a norm of the compact difference; the given force itself need not belong to that homogeneous force space.
\end{proposition}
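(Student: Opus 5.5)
The plan has three parts. First, reduce density in the full Bochner spaces to density of the smooth compact class $\mathcal F_c$ itself. Then apply the gluing of Theorem~\ref{thm:Rinsert} to each compact approximant. Finally, check the additional $\dot H^{-1}$ estimate directly from scaling.

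\emph{Step 1: $\mathcal F_c$ is dense.} For $q\in\{1,2\}$ and any real $s$, $C_c^\infty(\R^3\times(0,\infty);\R^3)$ is dense in $L^q(0,\infty;H^s(\R^3))$. Simple functions $\sum\mathbf 1_{I_k}(t)h_k$ are dense in Bochner spaces. We may take the intervals $I_k$ compact in $(0,\infty)$, and each $h_k\in C_c^\infty$, because $C_c^\infty$ is dense in $H^s$. Mollifying the indicators in time then gives approximants in $\mathcal F_c$.

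The same argument works for $L^2(0,\infty;\dot H^{-1})$. With the realization \eqref{eq:homogeneous-realization}, the space $\dot H^{-1}$ is isometric to $L^2$ via $h\mapsto|\xi|^{-1}\widehat h$. Schwartz functions whose Fourier transforms vanish near $\xi=0$ are dense there. Truncating such a function in space by $\chi(x/R)$ converges in $\dot H^{-1}$: the error is small in $L^2$ at frequencies $|\xi|\ge1$. At low frequencies the error is $\widehat{h}*R^3\widehat\chi(R\cdot)-\widehat h$, which tends to zero in the $|\xi|^{-1}$-weighted $L^2$ because $\widehat h$ vanishes near zero. This low-frequency truncation is the step I expect to be the main obstacle. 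If it proves awkward, the alternative is to use that for $h\in L^2\cap\dot H^{-1}$ with compact support, the $\dot H^{-1}$ norm is controlled by $\|h\|_{L^{6/5}}$ via the dual Sobolev inequality. One then approximates in $L^2_t(L^{6/5}\cap L^2)$ after first cutting off the Fourier side.

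\emph{Step 2: glue onto the compact approximants.} Given a target $g_0$ in the full space and a radius $\rho$, pick $g\in\mathcal F_c\subset\mathcal F_{\R}$ within $\rho/2$. If $T^\nu_{\max,\R}(a,g)\le T$ we are done. Otherwise Theorem~\ref{thm:Rinsert} supplies $g_\eps=g+H_\eps+F_\eps$. This is still in $\mathcal F_c$, since the correction is spacetime compact in $(0,\infty)$. It has lifespan exactly $T$, and it lies within $\rho/2$ for $s<s_q$. The $\dot H^{-1}$ case needs the estimate of Step 3.

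\emph{Step 3: the simultaneous estimates.} The $E_T$ bound is \eqref{eq:REclose}. For the force we use $L^1_tL^2_x$, which is the case $s=0<1/2$, scaling as $\eps^{1/2}$ by \eqref{eq:packetFscale} and \eqref{eq:Hmixed}. For $L^2_tH^{-1}_x$, \eqref{eq:RnegativeScale} with $s=-1$ and $\beta(2,-1)=1/2$ gives order $\eps^{1/2}$. For $L^2_t\dot H^{-1}_x$ we argue directly. Each rescaled profile is compactly supported and uniformly smooth, so as in the proof of Theorem~\ref{thm:Rinsert} its $\dot H^{-1}$ norm is finite: $\int_{|\xi|<1}|\xi|^{-2}\dd\xi<\infty$ in three dimensions, and the bounds are uniform for the correction profiles. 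Fourier dilation then gives the factor $\eps^{-3/2+1}=\eps^{-1/2}$ at each time for $F_\eps$, and time integration over an interval of length $\eps^2$ contributes $\eps$. The result is $\eps^{1/2}$, and $\eps^{3/2}$ for $H_\eps$. The difference $g_\eps-g$ is exactly this compact sum, so its homogeneous norm is meaningful even if $g$ is not in $\dot H^{-1}$. This also completes the $\dot H^{-1}$ density claim in Step 2.
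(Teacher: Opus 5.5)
Your proposal is correct and follows essentially the same route as the paper. You first show that compact smooth forces are dense in the Bochner spaces (including $L^2_t\dot H^{-1}_x$), then glue the building block onto the compact approximant when its lifespan exceeds $T$, and then use the Fourier dilation count at $s=-1$ to get $\eps^{1/2}$ for $F_\eps$ and $\eps^{3/2}$ for $H_\eps$. The only minor differences are in Step 1: for the low-frequency part of the spatial truncation you use an approximate-identity argument on the Fourier side or the dual Sobolev inequality, where the paper uses the $L^1$--$L^2$ bound \eqref{eq:Rnegative-cutoff}, and you use simple functions with mollified indicators where the paper cites Hunter's density of finite sums $\sum_j\varphi_j(t)b_j$.
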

\begin{proof}
The standard density of Schwartz functions in $H^s$
\cite[Chapter~4, Section~1, Exercise~1]{taylor2011}, followed by smooth spatial
cutoffs, gives density of $C_c^\infty$ for every real $s$. For the homogeneous
space, approximate $|\xi|^{-1}\widehat h$ by smooth annular Fourier data and
then use the same cutoffs. The only additional low-frequency estimate is
\begin{equation}\label{eq:Rnegative-cutoff}
 \norm{k}_{\dot H^{-1}}^2\le
 C\norm{k}_1^2\int_{|\xi|<1}|\xi|^{-2}\dd\xi+\norm{k}_2^2
 \le C'\norm{k}_1^2+\norm{k}_2^2.
\end{equation}
Applied to the Schwartz cutoff tails, this proves spatial compact-smooth
density in $\dot H^{-1}$. The constructions preserve real vector fields.
For time dependence, Hunter~\cite[Proposition~6.29]{hunterpde} gives density
of finite sums $\sum_j\varphi_j(t)b_j$ with $\varphi_j\in C_c^\infty$ in
$L^q(I;X)$, $q<\infty$. Truncate $(0,\infty)$ to $[1/N,N]$ and approximate
each $b_j$ by a spatially compact smooth field. The resulting finite sums
are jointly smooth and compactly supported in space and positive time.

For the inhomogeneous density of forces producing breakdown, first choose such a smooth compact force within half the prescribed radius of the target. Corollary~\ref{cor:Rclasses} supplies a smooth compact force producing breakdown within the other half. This is density of a smooth subset in the completion, not a definition of classical breakdown for every rough force.

For the homogeneous gluing estimate, apply the Fourier scaling calculation of \eqref{eq:RnegativeScale} at $s=-1$. Explicitly, an amplitude $\eps^{-a}$ has spatial $\dot H^{-1}$ factor $\eps^{5/2-a}$, because
$\widehat{\eps^{-a}h(\,\cdot/\eps)}(\xi)=\eps^{3-a}\widehat h(\eps\xi)$.
The time $L^2$ factor is $\eps$. The source force has $a=3$, while the correction has $a=2$ and uniformly bounded compact rescaled profiles; their $\dot H^{-1}$ norms are uniformly finite by \eqref{eq:Rnegative-cutoff}. Hence
\[
 \norm{F_\eps}_{L^2_t\dot H^{-1}_x}\le C\eps^{1/2},\qquad
 \norm{H_\eps}_{L^2_t\dot H^{-1}_x}\le C\eps^{3/2}.
\]
Given a compact smooth force, if its lifespan is at most $T$ it already belongs to the required breakdown set. Otherwise glue the building block and use these estimates. Combining this relative approximation with the proved compact-smooth Bochner density establishes density in $L^2_t\dot H^{-1}_x$. Finally these estimates, \eqref{eq:REclose}, and the $q=1,s=0$ and $q=2,s=-1$ cases of the gluing estimates prove the simultaneous convergence for the same family.
\end{proof}

As on the torus, every $a\in\mathcal X_{\R}$ occurs as the initial velocity of a solution producing breakdown for some smooth force. Thus the projection of the initial data and forces producing breakdown onto $\mathcal X_{\R}$ is the whole space, with quantifier order $\forall a\,\exists f$. The preceding approximation concerns finite-energy whole-space solutions and uses one rescaled copy of $U$ in each perturbation.

\subsection{Cell averages on prescribed grids}\label{sec:grids}
The spaces used for energy estimates should be distinguished from the
additional regularity assumed in numerical error estimates. For example,
on a bounded no-slip computational domain one uses the solenoidal energy
space $H$ with zero normal trace and $V=H^1_0\cap H$; finite-dimensional
velocity and pressure spaces discretize this setting. Guermond, Minev and
Shen~\cite[Section~2 and Theorem~3.2]{gms} state these spaces and the
additional smoothness required for their error estimate. The initial
pressure is constrained by the equation and boundary conditions. These
bounded-domain conventions do not impose boundary conditions at infinity
in the whole-space result below.

To relate the continuous norms to a concrete numerical observation, let $\mathcal T_h$ be a complete uniform Cartesian grid of $\R^3$ with positive mesh widths. For a locally integrable vector field, define the cell-observation map with sequence codomain
\[
 A_h:L^1_{\mathrm{loc}}(\R^3;\R^3)\longrightarrow(\R^3)^{\mathcal T_h},
 \qquad (A_hz)_C=\frac1{|C|}\int_Cz(x)\dd x.
\]
Only coordinatewise equality in this codomain is needed. If a sequence norm is desired for $z\in L^2$, the natural volume-weighted norm obeys
$\sum_{C\in\mathcal T_h}|C|\,|(A_hz)_C|^2\le\norm{z}_2^2$
by Cauchy--Schwarz in each cell and summation. Each grid has infinitely many cells. The next result concerns a finite number of such grids and equality on every cell of each grid, at every time before blowup.

\begin{theorem}\label{thm:Rgrid}
Under the hypotheses on the given smooth solution of Theorem~\ref{thm:Rinsert}, fix a finite family of complete uniform Cartesian grids. The glued solutions may be chosen so that, for every grid in that family,
\[
 A_hu_\eps(t)=A_hv(t),\qquad A_hg_\eps(t)=A_hg(t)
 \qquad(0\le t<T),
\]
while $T^\nu_{\max,\R}(a,g_\eps)=T$ and the energy and force convergences in Proposition~\ref{prop:Renergy} hold. All differences are supported inside a ball contained in one cell of every grid, except for an optional spatially constant pressure gauge.
\end{theorem}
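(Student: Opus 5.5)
The plan is to reuse the gluing of Theorem~\ref{thm:Rinsert} unchanged, with one extra constraint on where the perturbation is placed. We then show that the cell averages of the velocity and force differences vanish automatically, by an integration argument.

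\emph{Placement.} Each grid in the finite family has a boundary $\Sigma_h$, the union of its cell faces. This is a closed set of measure zero. The finite union $\Sigma=\bigcup_h\Sigma_h$ is therefore closed with dense open complement. Pick a point $x_0\notin\Sigma$ and a ball $B$ about $x_0$ disjoint from $\Sigma$. Since $B$ is connected, it lies inside a single open cell $C_h$ of every grid. Apply Theorem~\ref{thm:Rinsert} with this ball $B$. That theorem keeps the velocity difference $d_\eps=u_\eps-v=w_\eps+U_\eps$ supported in $B$ for $t<T$, and gives $g_\eps-g\in C_c^\infty(B\times(0,\infty))$. For every cell other than $C_h$, both averages are therefore unchanged. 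Lifespan exactly $T$, \eqref{eq:REclose}, and the force convergences of Proposition~\ref{prop:Renergy} are inherited verbatim, since the construction is literally the same family.

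\emph{Velocity averages on $C_h$.} The field $d_\eps(t)$ is smooth, divergence free and compactly supported in $B\subset C_h$. For each component,
\[
 \int d_{\eps,i}\dd x=\int\nabla\cdot(x_i d_\eps)\dd x=0 .
\]
Hence $(A_hu_\eps)_{C_h}=(A_hv)_{C_h}$.

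\emph{Force averages on $C_h$.} Subtract the equation for $(v,\pi,g)$ from the equation for $(u_\eps,\pi+P_\eps,g_\eps)$. For $0\le t<T$ this gives
\[
 g_\eps-g=\partial_td_\eps-\nu\Delta d_\eps
 +\nabla\cdot(u_\eps\otimes u_\eps-v\otimes v)+\nabla P_\eps .
\]
Every term on the right is compactly supported in $B$. For the nonlinear term this holds because $u_\eps=v$ outside $B$, and we use the compact pressure representative $P_\eps$ allowed in the proof of Theorem~\ref{thm:Rinsert}. Integrating over $\R^3$:
\begin{itemize}
\item $\int\partial_td_\eps=\frac{\dd}{\dd t}\int d_\eps=0$ by the previous step;
\item the remaining three terms are exact divergences or gradients of compactly supported smooth fields, so their integrals vanish.
\end{itemize}
Thus $\int_{C_h}(g_\eps-g)\dd x=0$ for $t<T$, and the force averages agree on every grid.

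The only delicate point, which I expect to be the main obstacle, is the pressure gauge. The whole-space pressure is determined only up to a function of time. Choosing the compact representative $P_\eps$ is what makes $\int\nabla P_\eps=0$ and keeps the differences localized in $B$. Any other admissible choice differs from it by a spatially constant term, which is exactly the optional gauge exempted in the statement.
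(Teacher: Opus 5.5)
Your proposal is correct and follows the paper's proof. It uses the same placement of the ball off the finite union of grid faces, the identity $\delta u_j=\nabla\cdot(x_j\,\delta u)$ for the velocity averages, and integration of the subtracted momentum equations with the compact pressure representative $P_\eps$ for the force averages. The only difference is cosmetic: you integrate over all of $\R^3$ using compact support, whereas the paper integrates over the containing cell and notes that the boundary flux term vanishes.
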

\begin{proof}
The union of faces in one grid is a closed, locally finite union of planes. For the fixed finite family their union is closed and has measure zero. Choose a point outside that union and a ball with closure inside one cell of each grid. Apply Theorem~\ref{thm:Rinsert} inside that ball, using the compact pressure representative. Put $\delta u=u_\eps-v$, $\delta p=p_\eps-\pi=P_\eps$, and $\delta g=g_\eps-g$.

Fix one containing cell $C$. For every $t<T$, $\delta u$ is smooth, divergence free and compactly supported in its interior. For component $j$,
\[
 \delta u_j=\nabla\cdot(x_j\delta u),\qquad
 \int_C\delta u_j\dd x=\int_{\partial C}x_j\delta u\cdot n\dd S=0.
\]
Subtract the two momentum equations and integrate to obtain
\begin{equation}\label{eq:gridforce}
 \int_C\delta g\dd x
 =\frac{\dd}{\dd t}\int_C\delta u\dd x
 +\int_{\partial C}\bigl(u_\eps\otimes u_\eps-v\otimes v
                         +\delta p\,I-\nu\nabla\delta u\bigr)n\dd S.
\end{equation}
The time derivative is zero by the preceding identity. All differences vanish in a neighborhood of the cell boundary, so the surface term is zero. A spatially constant pressure gauge adds a multiple of $\int_{\partial C}n\dd S=0$. The averages therefore agree in $C$; in every other cell they agree by support. The same construction fits the containing cell of each grid, and the convergence estimates are unaffected by this fixed upper bound on $\eps$.
\end{proof}

A deterministic procedure using only the initial velocity cell averages
and the force cell averages for $0\le t<T$ receives identical data in the
two cases. Other observations, such as point values or pressure, may
distinguish them. The perturbation depends on the fixed grid family and
develops larger amplitudes and derivatives as its support shrinks. The
result therefore concerns the information in the prescribed averages,
rather than convergence under refinement for one fixed smooth problem.

\section*{Declaration of generative AI use}
GPT Astra (OpenAI) assisted with literature review, development and checking
of mathematical arguments, drafting, and LaTeX preparation. Codex assisted
with manuscript consolidation and revision. Responsibility for the final
content remains with the authors.

\end{document}